\documentclass[10pt,leqno]{amsart}
\usepackage{graphicx}
\usepackage{indentfirst,csquotes}

\usepackage{tikz}
\usetikzlibrary{arrows.meta, positioning}
\usepackage{microtype}
\usepackage{secdot}

\usepackage{amssymb,amsthm,amsmath,mathtools}
\usepackage{paralist,titlesec,fancyhdr,etoolbox}
\newtheorem{theorem}{Theorem}[section]
\newtheorem{definition}[theorem]{Definition}
\newtheorem{example}[theorem]{Example}
\newtheorem{lemma}[theorem]{Lemma}
\newtheorem{proposition}[theorem]{Proposition}
\newtheorem{remark}[theorem]{Remark}
\newtheorem{corollary}[theorem]{Corollary}

\newtheorem{theoremA}{Theorem}

\newtheorem*{theoremStar}{Theorem}

\usepackage{xcolor, hyperref}
\definecolor{gal}{RGB}{0, 7, 111}

\hypersetup{
    colorlinks=true,
    citecolor=gal,
    linkcolor=gal,
    urlcolor=gal
}

\makeatletter
\renewcommand\@biblabel[1]{\textcolor{gal}{[#1]}}
\makeatother

\titleformat{\section}
{\normalfont\Large\bfseries\centering}
{\thesection}{1em}{}

\newcommand{\C}{\mathcal{C}}
\newcommand{\Q}{\mathbb{Q}}
\newcommand{\A}{\mathcal{A}}
\newcommand{\Lie}{\mathcal{L}}
\newcommand{\Z}{\mathbb{Z}}

\usepackage{lipsum}
\usepackage{tikz-cd}

\begin{document}
\title{On commensurations of pro-$\mathcal{C}$ groups} 
\author[Initial Surname]{Pedro Cusinato}
\date{\today}
\email{cusinato.pedro@gmail.com}
\maketitle

\let\thefootnote\relax
\footnotetext{} 

\begin{abstract}
    We study commensurations of pro-$\C$ groups, where $\C$ is an extension-closed variety of finite groups. We characterize those commensurations of pro-$\C$ groups that can be extended to embeddings into a common overgroup, obtaining a pro-$\C$ analog of a recent theorem by Touikan. We also consider obtain a structural result on virtual retracts of pro-$\C$ groups.
\end{abstract} 

\bigskip

\section{Introduction}

Groups $G_1$ and $G_2$ are said to be commensurable if they share isomorphic finite-index subgroups. The notion of commensurability is central in geometric group theory, arising naturally in the study of hyperbolic manifolds \cite{mostow} \cite{prasad} \cite{hyperbolic}, lattices in semisimple Lie groups \cite{margulis} and $S$-arithmetic groups \cite{platonov}.

Recently, motivated by a question related to the tree-lattice proof \cite{treelatt} of Leighton's graph covering theorem \cite{leighton}, Touikan \cite{Minasyan} has considered the following problem for commensurable abstract groups: given a group $H$ and monomorphisms $\iota_i:H\to G_i$ with finite-index images for $i=1,2$, when does there exist an abstract group $K$ containing $G_1$ and $G_2$ as finite-index subgroups such that the diagram

\[
    \begin{tikzcd}[row sep=.8em, column sep=4em]
& G_1 \arrow[dr, "j_1"] & \\
H \arrow[ur, "\iota_1"] \arrow[dr, "\iota_2"] & & K \\
& G_2 \arrow[ur, "j_2"] &
    \end{tikzcd}
\]

\noindent commutes? If there is such $K$, we say that the commensuration $(\iota_1,\iota_2)$ admits a completion and we call $(j_1,j_2)$ a completion for the commensuration $(\iota_1,\iota_2)$. In this direction, Touikan obtains the following characterization for when a commensuration may be completed:

\begin{theoremStar}
    {$($\cite[A]{Minasyan}$)$}
    Let $(\iota_1,\iota_2)$ be a commensuration between groups $G_1$ and $G_2$ over a group $H$. A completion for $(\iota_1,\iota_2)$ exists if and only if
    \begin{itemize}
        \item[(i)] there is a finite-index subgroup $N$ of $H$ such that the images of $N$ in $G_1$ and $G_2$ are normal;
        \item[(ii)] the images of groups $G_1$ and $G_2$ in $\operatorname{Out}N$ together generate a finite group.
    \end{itemize}
\end{theoremStar}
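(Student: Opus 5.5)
We identify $H$ with its images $\iota_1(H)\le G_1$ and $\iota_2(H)\le G_2$ and argue the two directions separately.

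\textbf{Necessity.} Suppose $(j_1,j_2)$ is a completion into $K$. Then $j_1\iota_1(H)=j_2\iota_2(H)$ has finite index in $K$. Its normal core $N_0$ in $K$ is therefore a finite-index normal subgroup of $K$ contained in the image of $H$. Put $N=(j_1\iota_1)^{-1}(N_0)\le H$. This $N$ has finite index in $H$, and its image in each $G_i$ is $j_i^{-1}(N_0)$, which is normal in $G_i$ since $N_0\trianglelefteq K$; this gives (i). For (ii), the conjugation actions of $G_1$ and $G_2$ on $N$ are restrictions of the conjugation action of $K$ on $N_0\cong N$. Hence their images in $\operatorname{Out}N$ lie in the image of $K/N_0$, which is finite.

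\textbf{Sufficiency.} Take $N$ as in (i). Let $Q\le\operatorname{Out}N$ be the finite group generated by the images of $G_1$ and $G_2$, and let $\tilde Q\le\operatorname{Aut}N$ be its full preimage, which contains $\operatorname{Inn}N$. The first idea is to form the fibered product of $G_1$ and $G_2$ over their actions on $N$, but that is not correct in general. Instead we build $K$ as an extension of $N$ whose action on $N$ realizes all of $\tilde Q$. Concretely, each $G_i$ is an extension $1\to N\to G_i\to G_i/N\to 1$ with induced outer action $G_i/N\to Q$.

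We would then proceed in three steps.
\begin{enumerate}
\item Reduce to the case where $N$ has trivial center, or handle the center $Z(N)$ explicitly.
\item Apply the standard technique of Touikan and Minasyan. First replace $N$ by a smaller characteristic finite-index subgroup. Then take the group $E=N\rtimes_{\mathrm{conj}}$-type pullback $E=\{(a,q)\in\operatorname{Aut}N\times\hat Q\}$, where $\hat Q$ is a finite group mapping onto $Q$ through which both $G_i/N\to Q$ factor after passing to finite-index data.
\item Realize $K$ as an amalgam, namely the fundamental group of a finite graph of groups with a single edge whose vertex groups are the $G_i$ and whose edge group is $H$. Quotient it by a suitable normal subgroup so that $G_1$ and $G_2$ inject with finite index.
\end{enumerate}

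A cleaner route to the same $K$ is to take the induced action: the amalgam $G_1*_H G_2$ acts on its Bass–Serre tree, and conditions (i) and (ii) give a homomorphism $G_1*_H G_2\to \operatorname{Aut}N$. One then defines $K$ as the image of $G_1*_H G_2$ in a suitable finite extension of $\operatorname{Aut}(N)\times\text{finite}$. Each $G_i$ must be recorded faithfully, so we map $G_1*_HG_2$ to $\operatorname{Aut}(N)\times\operatorname{Sym}(X)$. Here $X$ is a finite set on which $G_1*_H G_2$ acts, obtained from the finite coset spaces $G_i/N$, glued along $H/N$ so that the stabilizers behave correctly.

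\textbf{Main obstacle.} The hardest step is injectivity of $G_i\to K$ together with finite index of its image. The kernel of $G_i\to\operatorname{Aut}N$ is the centralizer $C_{G_i}(N)$, and this centralizer must be detected by the finite permutation part. If $Z(N)$ is nontrivial, $N$ itself is not detected by $\operatorname{Aut}N$. We would handle this by working instead with $N\to K$ via the holomorph-type construction. If that fails, we would fall back on Touikan's original argument as cited, \cite[A]{Minasyan}.
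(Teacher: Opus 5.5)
Your necessity argument is correct; it is the abstract counterpart of Lemmas \ref{PropernessofEmbed} and \ref{OutFin}.

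The sufficiency direction has a genuine gap. Steps 1--3 are not constructions. The ``pullback'' $E$ is never defined, and ``quotient by a suitable normal subgroup so that $G_1$ and $G_2$ inject with finite index'' just restates the theorem. Falling back on Touikan's argument is circular. Your $\operatorname{Aut}N\times\operatorname{Sym}(X)$ route does work when $Z(N)=1$. With $G=G_1*_HG_2$, the quotient $G/N=(G_1/N)*_{H/N}(G_2/N)$ is virtually free, so some finite-index $L\trianglelefteq G$ has $L\cap G_i=N$. The kernel of $G_i\to\operatorname{Aut}N\times G/L$ is then $C_{G_i}(N)\cap N=Z(N)$. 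In general, though, conjugation kills $Z(N)$, which may be infinite (already for $N\cong\Z^n$). So no map of the form $G_i\to\operatorname{Aut}N\times(\text{finite})$ can be injective. You cannot ``reduce to trivial center'', and the unspecified ``holomorph-type construction'' is exactly where the content of the theorem lies.

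The missing idea is the one the paper carries out for its pro-$\C$ analogue (proof of Theorem \ref{TeoA}), following Minasyan. Set $K:=G/K_\star$, where $K_\star\trianglelefteq G$ satisfies $K_\star\cap N=1$, $NK_\star=N\times K_\star$ has finite index in $G$, and $K_\star N/N$ is torsion-free. The construction of $K_\star$ goes as follows.
\begin{enumerate}
\item Lift a free, finitely generated, finite-index normal subgroup $J\le G/N$ to $J'\le G$; this is possible because free groups split extensions.
\item By (ii), a finite-index subgroup of $J'$ acts on $N$ by inner automorphisms. Multiply each element of a free basis by a suitable element of $N$. This gives a free subgroup $K$ centralizing $N$ with $N\times K$ of finite index (Lemma \ref{LemmaOut}), which you may shrink so that $N\times K\trianglelefteq G$.
\item The decisive step is Theorem \ref{TechA}, which makes the complement normal. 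Let $\Gamma=G/(N\times K)$ and consider the $\Gamma$-module $Z(N)\times K/[K,K]$. Average a non-equivariant splitting over $\Gamma$, after multiplying by $|\Gamma|$ on a finite-index subgroup that meets the $|\Gamma|$-torsion trivially. This yields an invariant $L'$ with $Z(N)\oplus L'$ of finite index, and the preimage $K_\star$ of $L'$ is normal in $G$.
\end{enumerate}
Finally, $K_\star\cap G_i=1$, because $G_i$ maps onto a finite subgroup of $G/N$ while $K_\star$ maps injectively into the torsion-free group $J$. Each $G_i$ has finite index in $G/K_\star$ because $NK_\star\le G_iK_\star$ has finite index in $G$.
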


Let $(\iota_1,\iota_2)$ be a commensuration between groups $G_1$ and $G_2$ over a group $H$. We may construct its corresponding free amalgamated product

\[
    G:=G_1*_H G_2,
\]
in which the images of $H$ in $G_1$ and $G_2$ under $\iota_1$ and $\iota_2$, respectively, are identified. 


\begin{remark}\label{AbstractResult}
        We can reformulate the previous theorem as follows: a commensuration $(\iota_1,\iota_2)$ of abstract groups $G_1$ and $G_2$ over a group $H$ with the corresponding amalgamated product $G$ can be completed if and only if
        \begin{itemize}
            \item[(i)] $[H:H_G]<\infty$;
            \item[(ii)] the image of $G$ in $\operatorname{Out}H_G$ is finite.  
        \end{itemize}
\end{remark}

In the context of profinite groups, the notion of commensurability is similar. Profinite groups are commensurable if they share isomorphic open subgroups. Commensurable profinite groups have the same subgroup growth, which is a topic of particular interest in the context of pro-$p$ and profinite groups \cite{growth}. We also mention that commensurability classes of compact $p$-adic analytic groups are in correspondence with isomorphism classes of finite-dimensional Lie algebras over the field of $p$-adic numbers \cite[II.V]{LALG}. In \cite{kamm}, the author investigates how much information the commensurability class of the profinite completion of an $S$-arithmetic group can provide. A systematic approach to the study of the commensurability class of a profinite group was developed in \cite{commensurators}.

In this paper, we consider the analogous completion problem for commensurations of profinite groups and, more generally (and more subtly), for commensurations of pro-$\C$ groups, where $\C$ is a class of finite groups closed under subgroups, quotients and extensions. That is, given commensurable pro-$\C$ groups $G_1$ and $G_2$ and continuous monomorphisms with open images
\[
    \iota_i: H\to G_i, \text{ for } i=1,2,
\]

\noindent for some pro-$\C$ group $H$, we seek a pair of continuous monomorphisms $(j_1,j_2)$ from $G_1$ and $G_2$, respectively, to a common pro-$\C$ group $K$ such that $j_i(G_i)$ is open in $K$ for $i=1,2$ and $j_1\circ\iota_1=j_2\circ\iota_2$. For a commensuration $(\iota_1,\iota_2)$ between pro-$\C$ groups $G_1$ and $G_2$ over a pro-$\C$ group $H$ we also may consider its corresponding free pro-$\C$ amalgamated product (see \cite[9.2]{RZ})
\[
    G:=G_1\amalg_H G_2.
\]
However, in contrast with the abstract free amalgamated product, free pro-$\C$ amalgamated products need not to contain faithful images of the groups involved in the construction, especially when $\C$ is not the class of all finite groups. Following \cite{RZ}, we say that $G$ is proper if the natural images of $G_1$ and $G_2$ in it are faithful.  


We prove a pro-$\C$ version of Theorem \cite[A]{Minasyan}:

 \begin{theoremA}\label{TeoA}
 For a class $\C$ of finite groups closed under subgroups, quotients and extensions, let $(\iota_1,\iota_2)$ be a commensuration between pro-$\mathcal{C}$ groups $G_1$ and $G_2$ over a pro-$\C$ group $H$ and let
 \[
 G=G_1\amalg_H G_2
 \]
 be the associated free pro-$\mathcal C$ amalgamated product. Denote by $H_G$ the normal core of $H$ in $G$. The commensuration $(\iota_1,\iota_2)$ admits a completion if and only if
 \begin{itemize}
     \item[(i)] $G$ is a proper amalgamated product of $G_1$ and $G_2$;
     \item[(ii)] the image of $G$ in $\operatorname{Out}H_G$ is finite.
 \end{itemize}
 \end{theoremA}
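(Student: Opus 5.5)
Both directions go through the quotient $K := G/M$, where $M$ is the kernel of the conjugation action of $G$ on $H_G$ twisted by inner automorphisms. The core $H_G$ is a closed normal subgroup of $G$ contained in $H$. The conjugation action gives a continuous homomorphism $\rho\colon G\to \operatorname{Aut} H_G$, and hence a map $\bar\rho\colon G\to\operatorname{Out}H_G$.

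\emph{Necessity.} Suppose $(j_1,j_2)$ is a completion into a pro-$\C$ group $K$.
\begin{itemize}
\item[(a)] By the universal property of $G_1\amalg_H G_2$ (\cite[9.2]{RZ}), there is a continuous homomorphism $\varphi\colon G\to K$ restricting to $j_i$ on the image of $G_i$. Since the $j_i$ are injective, the natural maps $G_i\to G$ are injective, which gives (i).
\item[(b)] Let $L=\varphi(G)$, an open subgroup of $K$ containing $j_1\iota_1(H)$ as an open subgroup. Set $N=\bigcap_{g\in L} g\,j_1\iota_1(H)g^{-1}$. This is an open normal subgroup of $L$, being a finite intersection since $H$ has finite index in $L$.
\item[(c)] Then $\varphi^{-1}(N)\cap H$ is normal in $G$ and open in $H$, so $H_G$ is open in $H$. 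Moreover $\varphi$ is injective on $H$ and $\varphi(H_G)\subseteq N$.
\item[(d)] The action of $G$ on $H_G$ factors through $L$, up to identifying $H_G$ with $\varphi(H_G)$, which is normal in $L$. Since $H_G$ has finite index in $G$ modulo $\ker\varphi$, the group $L/\varphi(H_G)$ is finite, and the image of $G$ in $\operatorname{Out}H_G$ is a quotient of it. This gives (ii).
\end{itemize}
One must check that $\ker\varphi\cap H_G=1$; this holds because $\varphi|_H=j_1\iota_1$ is injective.

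\emph{Sufficiency.} Assume (i) and (ii). The first step is to show $H_G$ is open in $H$. I expect this to be the main obstacle, since properness alone does not obviously give it.

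My plan for this step:
\begin{itemize}
\item Let $C=\ker\bar\rho$, an open normal subgroup of $G$ by (ii). For $g\in C$, the automorphism $\rho(g)$ is inner by some $h\in H_G$, so $gh^{-1}$ centralizes $H_G$. Hence $C = H_G\cdot C_G(H_G)$.
\item I will instead work in the quotient $K:=G/C_G(H_G)$ and show $H_G$ is open there. First observe $H_G\cap C_G(H_G)=Z(H_G)$.
\item A cleaner route is the following. Pass to $G/Z$ with $Z:=C_G(H_G)\cap H$, which is normal because $C_G(H_G)$ is normal; note $Z$ is usually not trivial.
\item A better choice is to define $K$ as the pushout
\[
K=(H_G\rtimes (G/C))\big/\{(\mathrm{inn}\ldots)\},
\]
which is the standard construction of an extension of $H_G$ by the finite group $G/C$ with prescribed outer action, realized as the quotient $G/M$ with $M=\{g\in G: \rho(g)\in \operatorname{Inn}(H_G)\text{ via } h \text{ and } gh^{-1}\ldots\}$.
\end{itemize}

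Concretely, I set $M=C_G(H_G)\cap\bigcap_{i}\ker(\ldots)$. The most natural candidate is $M=C_G(H_G)$, with $K=G/C_G(H_G)$, which contains $H_G/Z(H_G)$ as an open subgroup. This loses the centre, so it must be fixed.

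The fix:
\begin{itemize}
\item Take $M=C_G(H_G)\cap \ker(G\to G/H_G\cdot\ldots)$.
\item Use (i) to intersect with $G_1$ and $G_2$. In the proper amalgam, $G_i\cap C_G(H_G)$ is trivial on a finite-index part, because $H_G\le H\le G_i$ is open in $G_i$, provided $H_G$ is open in $H$.
\item For openness, use that $H$ acts on $G$'s standard pro-$\C$ tree (\cite{RZ}, properness being needed for the tree to exist with vertex stabilizers $G_i$).
\item The kernel of the action on the finite quotient $G/C$ then shows that conjugates of $H$ fall into finitely many classes modulo $C$, so $H_G\supseteq H\cap C_G(H_G)$-modification has finite index.
\end{itemize}

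Finally, I will check that $K$ is pro-$\C$ by extension closure, being an extension of a quotient of $H_G$ by a finite group in $\C$. I will also check that the induced maps $G_i\to K$ are injective with open image, and that they agree on $H$ by construction.
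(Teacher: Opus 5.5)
Your necessity argument is sound and close to the paper's (Lemmas~\ref{PropernessofEmbed} and~\ref{OutFin}). The one variation is in (c), where you get openness of $H_G$ directly from the completion rather than via properness. That works because $(\varphi|_H)^{-1}(N)$ is normalized by both $G_1$ and $G_2$, the $j_i$ being injective. You should say this explicitly: the intersection of a normal subgroup with $H$ is not normal in general.

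\textbf{The sufficiency direction has a genuine gap.} It consists of abandoned candidates, and the subgroup $M$ with $K=G/M$ is never actually defined.

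You also misplace the difficulty. Openness of $H_G$ in $H$ is easy once $G$ is proper. Choose $U\trianglelefteq_o G$ with $U\cap G_i\le H$ for $i=1,2$. Then $U\cap G_1=U\cap H=U\cap G_2$, so this subgroup is normal in both $G_i$, hence in $G$ (compare Lemma~\ref{PropernessLem}).

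The real content is finding a closed $M\trianglelefteq G$ with $M\cap G_i=1$ and $MH_G$ open. Such an $M$ meets $H_G$ trivially, so $[M,H_G]=1$. What is needed is therefore exactly a $G$-invariant virtual complement to $Z(H_G)$ inside $C_G(H_G)$. You correctly see that $C_G(H_G)$ itself fails because it contains $Z(H_G)$. None of your proposed repairs produces such a complement: not the ``pushout'' extension of $H_G$ by $G/C$, not the pro-$\C$ tree, and not the finitely-many-conjugates remark. The pushout is especially unhelpful. An extension of $H_G$ by $G/C$ realizing the given outer action is not automatic, and even if one exists, nothing supplies embeddings of $G_1$ and $G_2$ into it that agree on $H$.

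\textbf{The missing idea} is that $G/H_G\cong(G_1/H_G)\amalg_{H/H_G}(G_2/H_G)$ is an amalgam of finite groups in $\C$, hence virtually free pro-$\C$ of finite rank. The paper uses this as follows.
\begin{itemize}
\item Take an open normal free subgroup $J\le G/H_G$. Then $1\to H_G\to\pi^{-1}(J)\to J\to 1$ splits.
\item By (ii), after passing to an open subgroup, each free generator of the lifted complement can be multiplied by an element of $H_G$ so that it centralizes $H_G$. Freeness guarantees the twisted generators still span a complement $K$ with $H_GK=H_G\times K$ (Lemma~\ref{LemmaOut}). Shrinking via a characteristic subgroup of $J$ makes $H_G\times K$ open and normal in $G$.
\item An averaging argument over the finite group $G/(H_G\times K)$, in the module $Z(H_G)K/[K,K]$ (this needs $K^{ab}$ finitely generated), replaces $K$ by a $G$-normal $K_\star$ with $H_G\times K_\star$ still open (Theorem~\ref{TechA}). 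This is precisely what repairs the ``loss of the centre'' you ran into.
\item Finally, $\pi(K_\star)\le J$ is torsion-free while $\pi(G_i)$ is finite, so $K_\star\cap G_i=1$, and $G/K_\star$ is the completion.
\end{itemize}
Without the virtual freeness of $G/H_G$, your sketch has no mechanism for producing $M$.
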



 
A closed subgroup of a profinite group is said to be a virtual retract if it is a retract of an open subgroup. We prove a criterion for completion of a commensuration of profinite groups in terms of virtual retracts:

\begin{theoremA}\label{TeoF}
    Let $(\iota_1,\iota_2)$ be a commensuration between profinite groups $G_1$ and $G_2$ over a profinite group $H$. Then $(\iota_1,\iota_2)$ admits a completion if and only if $H$ is a virtual retract of the free profinite amalgamated product $G:=G_1\amalg_H G_2$.
\end{theoremA}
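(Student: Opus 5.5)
We prove Theorem \ref{TeoF} in two directions, relying on Theorem \ref{TeoA} (with $\C$ the class of all finite groups) for the forward direction and on a direct construction for the converse. Throughout, $G=G_1\amalg_H G_2$ is the free profinite amalgamated product, and we identify $H$, $G_1$, $G_2$ with their images in $G$ whenever $G$ is proper.

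\emph{Completion implies virtual retract.} Let $(j_1,j_2)$ be a completion into a profinite group $K$. By the universal property of the free profinite amalgamated product, $j_1$ and $j_2$ induce a continuous homomorphism $\varphi\colon G\to K$ extending both. Since $j_1$ is injective, $G$ is proper, so $G_1\le G$ and $\varphi|_{G_1}=j_1$. Now $j_1(\iota_1(H))$ is open in $j_1(G_1)$, which is open in $K$. Hence $j_1\iota_1(H)$ is open in $K$ and contains an open normal subgroup $M$ of $K$. Put $N=j_1\iota_1(H)\cap\ldots$; more simply, set $U=\varphi^{-1}(j_1\iota_1(H))$, an open subgroup of $G$ containing $H$. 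The map $r=(\varphi|_H)^{-1}\circ\varphi|_U\colon U\to H$ is a continuous homomorphism, since $\varphi|_H$ is an isomorphism of compact groups onto $j_1\iota_1(H)$. It restricts to the identity on $H$, so $r$ is a retraction and $H$ is a virtual retract of $G$. This direction is routine.

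\emph{Virtual retract implies completion.} Let $U\le_o G$ contain $H$ and let $r\colon U\to H$ be a retraction. Set $N=\ker r$, a closed normal subgroup of $U$ with $N\cap H=1$ and $U=N\rtimes H$. Choose an open normal subgroup $V\trianglelefteq G$ with $V\le U$, and let $L$ be the normal core of $N$ in $G$, i.e.\ $L=\bigcap_{g\in G/U} N^g$. This is an intersection of finitely many closed subgroups, hence closed and normal in $G$. We then take $K=G/L$. Three things must be checked:
\begin{itemize}
\item[(a)] $L\cap G_i=1$, which makes $G_1,G_2\to K$ injective. Since $G_i\cap U$ is open in $G_i$, it suffices to show $L\cap G_i\cap U$ is trivial, using $L\le N$ and the fact that $N$ meets $H$ trivially.
\item[(b)] $G_i$ maps to an open subgroup of $K$.
\item[(c)] The diagram commutes. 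This is automatic.
\end{itemize}

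For (b), note that $U/N\cong H$, so $N$ is a complement to $H$ in $U$. Since $L$ is a finite intersection of conjugates of $N$, the quotient $U/L$ embeds into $\prod_g U^g/N^g$. We want to show that $HL$ is open, which would force $G_iL\supseteq HL$ to be open. Indeed, $N/L$ embeds into $\prod_g N/(N\cap N^g)$, so a finiteness statement about how far the conjugates $N^g$ differ from $N$ would suffice.

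The main obstacle is (a) together with openness, since $N$ itself need not be normal in $G$ and need not avoid $G_i$. My first attempt would be to replace $U$ by a smaller open subgroup $U'=U\cap V$ with $V$ normal and $H\le V$, obtained by intersecting with $HV$. The hope is that the retraction then satisfies $r(g^{-1}ug)$ compatibility for $g\in G_i$. Failing that, I would verify conditions (i) and (ii) of Theorem \ref{TeoA} directly. Properness follows because the retraction forces $G_i\cap U$ to act on the infinite system in a controlled way. For (ii), take $H_G=H\cap$ the core of $U$, which has finite index in $H$ since $U$ has finite index in $G$. Then $H\cap V$ is normal in $G$ modulo $N\cap V$, and the action of $G$ on $H_G$ factors through a finite group because it is inner up to the finite quotient $G/V$. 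Theorem \ref{TeoA} then yields the completion.
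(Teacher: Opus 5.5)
Your proof that a completion yields a virtual retract is correct, and it is simpler than the paper's. The paper gets this direction from Corollary~\ref{precisodisso}, and hence from the whole construction behind Theorem~\ref{TeoA}. You only need that $\varphi\colon G\to K$ is injective on $H$ with open image. Then $r=(\varphi|_H)^{-1}\circ\varphi|_U$ on $U=\varphi^{-1}(j_1\iota_1(H))$ is a retraction, whose kernel is even $\ker\varphi\trianglelefteq G$, and the argument works verbatim for pro-$\C$ groups.

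The converse, however, has a genuine gap. Passing to $G/L$, with $L$ the normal core of $N=\ker r$, does not work for an arbitrary retraction.

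In (a), $N\cap H=1$ only controls $L\cap H$. Since $G_i\cap U$ may be strictly larger than $H$, nothing forces $L\cap G_i=1$. For instance, take $G_1=H\times C_2$ and $G_2=H$ with the obvious inclusions. Then $G=G_1$, the projection $r\colon G\to H$ is a retraction with $U=G$ and $L=N=C_2$, and $G_1\to G/L$ is not injective, although a completion plainly exists.

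Step (b) can fail as well. In $G=\widehat{\Z}\amalg_{\widehat{\Z}}\widehat{\Z}$ with $H$ embedded as $2\widehat{\Z}$ and $3\widehat{\Z}$, $H$ is central and $H\times M$ is open for some $M\trianglelefteq G$ free of rank $2$. Replace $M$ by the graph $\{f(m)m\}$ of a homomorphism $f\colon M\to H$ with $f\neq f\circ c_g$ for some $g\in G$, where $c_g(m)=gmg^{-1}$. This gives a retraction whose kernel meets its $g$-conjugate in a subgroup of infinite index, so $HL$ is not open.

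A direct construction therefore has to choose the complement carefully. That is exactly the work done in the proof of Theorem~\ref{TeoA}: one lifts a free, hence torsion-free, open normal subgroup of $G/H_G$ and normalizes it via Lemma~\ref{LemmaOut} and Theorem~\ref{TechA}.

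Your fallback, checking (i) and (ii) of Theorem~\ref{TeoA}, is the paper's route, but neither step is actually carried out.

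For (i), properness does not follow from any \emph{controlled action}. It needs an argument such as Lemma~\ref{propernessvirt}, which uses that $H$ is closed in $G$ to build compatible families of open normal subgroups of $G_1$ and $G_2$, and then applies the Ribes--Zalesskii properness criterion. Note also that before properness is known, $L\cap G_i$ in (a) is not even meaningful.

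For (ii), $H_G$ is the normal core $\bigcap_{g\in G}H^g$, not $H\cap\operatorname{core}_G(U)$, and the action of $V$ on $H_G$ is not inner in general. The missing observation is the following.
\begin{itemize}
\item By Lemma~\ref{PropernessLem}, $H_G$ is open in $H$.
\item $N$ and $H_G$ are both normal in $U$, and $N\cap H_G\le N\cap H=1$. Hence $[N,H_G]=1$, and $N\times H_G$ is open in $G$.
\item So $U=NH$ acts on $H_G$ through $H$, whose image in $\operatorname{Out}H_G$ is a quotient of the finite group $H/H_G$.
\item Since $[G:U]<\infty$, the image of $G$ in $\operatorname{Out}H_G$ is finite.
\end{itemize}
Equivalently, $H_G$ is a virtual direct factor of $G$, and Corollary~\ref{commDIR} gives the completion.
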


As a consequence of the technical results needed for the proof of Theorem \ref{TeoA}, we also obtain the following structural result about normal virtual retracts on profinite groups. This result is a profinite version of one of the main theorems of \cite{Minasyan}:

\begin{theoremA}\label{TeoC}
     Let $G$ be a profinite group with a closed normal subgroup $N$ that is a virtual retract of $G$, and suppose that $G/N$ is finitely generated. Then for every open subgroup $H$ of $G$ containing $N$, there exists a closed normal finitely generated subgroup $M$ of $G$ such that
 \[
 M\leq H, \qquad M\cap N=1, \qquad MN \text{ is open in } G.
 \]
\end{theoremA}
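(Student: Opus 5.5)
The plan is to build $M$ out of the kernel of a retraction onto $N$, make it normal by intersecting finitely many conjugates, and then use finite generation of $G/N$ to make the product with $N$ open. The last step is only partly worked out.

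\emph{Step 1: a first complement inside $H$.} By hypothesis there is an open subgroup $U\le G$ and a continuous retraction $\rho\colon U\to N$. The subgroup $U\cap H$ is open and contains $N$, and $\rho$ restricts to a retraction of it. So we may assume $U\le H$. Put $K=\ker\rho$. It is closed and normal in $U$, with $K\cap N=1$ and $KN=U$.

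\emph{Step 2: normalize by conjugating.} Since $K$ is normalized by $U$ and $[G:U]<\infty$, $K$ has only finitely many conjugates $K^{g_1},\dots,K^{g_n}$, with the $g_i$ coset representatives of $U$. Let $V=\bigcap_i U^{g_i}$. It is open and normal in $G$, contains $N$ (as $N\lhd G$), and lies in $U\le H$. Put $K_i=K^{g_i}\cap V$. Since $N\le V\le U^{g_i}$ and $K^{g_i}N=U^{g_i}$, Dedekind's law gives $V=K_iN$ with $K_i\cap N=1$. So each $K_i$ is a closed normal complement to $N$ in $V$, and $G$ permutes the $K_i$. The natural candidate is
\[
M_0=\bigcap_i K_i ,
\]
which is closed, normal in $G$, contained in $H$, and meets $N$ trivially. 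Also $[K_i,N]\le K_i\cap N=1$, so every $K_i$ centralizes $N$.

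\emph{Step 3: compare the complements.} Let $\rho_i\colon V\to N$ be the retraction with kernel $K_i$. For $k\in K_1$, the elements $\rho_1(k)=1$ and $\rho_i(k)$ induce the same automorphism of $N$ as $k$, namely the trivial one. Hence $\rho_i(k)\in Z(N)$, and $\delta_i=\rho_i|_{K_1}\colon K_1\to Z(N)$ is a continuous homomorphism into an abelian group. Then
\[
M_0=\ker\bigl(\delta=(\delta_2,\dots,\delta_n)\colon K_1\to Z(N)^{n-1}\bigr).
\]
Moreover $K_1\cong V/N$, which is finitely generated because $V/N$ is open in the finitely generated group $G/N$. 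So $\delta(K_1)$ is a finitely generated abelian profinite group, and $M_0N$ is open in $G$ exactly when $\delta(K_1)$ is finite.

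\emph{Main obstacle.} In general $\delta(K_1)$ need not be finite; for instance a map $\widehat{\mathbb Z}\to\mathbb Z_p$ can have non-open kernel. So $M_0$ itself may fail, and $M$ has to be chosen more cleverly. This is where finite generation of $G/N$, and the requirement that $M$ be finitely generated, must be used.

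What I would try first is to pass to a smaller open normal subgroup $W\le V$ containing $N$, so that the cocycles $\delta_i$, which are permuted by the finite group $G/V$, become trivial up to finite index on $W$. Concretely:
\begin{itemize}
\item Pick finitely many topological generators $x_1,\dots,x_r$ of $K_1$.
\item Let $M$ be the closed normal subgroup of $G$ generated by suitable corrected elements $x_j\,\delta'(x_j)^{-1}$, where $\delta'$ is an averaged $G$-equivariant version of the $\delta_i$.
\item Check three things: that $M$ still centralizes $N$ and satisfies $M\cap N=1$; that $M$ is finitely generated, since it is generated by the finitely many $G/V$-translates of these elements; and that $MN\supseteq W$ for an open $W$.
\end{itemize}
Making this averaging work without divisibility in $Z(N)$ is the step I expect to be hardest.
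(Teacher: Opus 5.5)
Your Steps 1--3 are correct. They amount to the same reduction the paper makes: pass to an open normal subgroup $V\le H$ containing $N$ that splits as $K_1\times N$, with $K_1\cong V/N$ finitely generated. You are also right that $M_0=\bigcap_i K_i$ can fail. But the proposal stops exactly where the substance of the theorem lies, so there is a genuine gap: you never construct a $G$-invariant complement. The sketched normal closure of ``corrected'' generators does not by itself give $M\cap N=1$. That requires the corrections to form a $G/V$-equivariant \emph{homomorphism}, and producing one is precisely what is missing. In the paper this is done by Theorem~\ref{TechA} and the module lemma before it.

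The missing idea has two parts.

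\emph{Linearize.} Since $K_i=\{k\,\delta_i(k)^{-1}:k\in K_1\}$ with $\delta_i(k)$ central in $V$, all the $K_i$ have the same closed derived subgroup $[K_1,K_1]$. Also $Z(N)K_1=C_G(N)\cap V$ is normal in $G$. Hence $A=Z(N)K_1/[K_1,K_1]\cong Z(N)\oplus K_1^{\mathrm{ab}}$ is a profinite module over $\Gamma=G/V$. In it, $Z(N)$ is a submodule and the image of $K_1$ is a finitely generated but non-invariant complement.

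\emph{Average in the domain, not in $Z(N)$.} This is how your divisibility problem disappears. Let $n=|\Gamma|$. The $n$-torsion of the finitely generated profinite abelian group $Q=A/Z(N)\cong K_1^{\mathrm{ab}}$ is finite. So there is an open $\Gamma$-invariant $W\le Q$ on which multiplication by $n$ is injective, and $nW$ is still open in $Q$. Let $\alpha$ be the non-equivariant splitting onto the image of $K_1$, and set $\beta(nw)=\sum_{g\in\Gamma}g\cdot\alpha(g^{-1}w)$. This is a well-defined continuous equivariant section over $nW$. In your notation it reads $\delta'(y^n)=\prod_{g\in\Gamma}\delta_g(y)$, where $\delta_g\colon K_1\to Z(N)$ is attached to the complement $K_1^g$; no $n$-th roots in $Z(N)$ are ever taken.

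Now let $M$ be the preimage in $Z(N)K_1$ of the submodule $\beta(nW)$. Then $M$ is normal in $G$ and lies in $V\le H$. It satisfies $M\cap N\le Z(N)\cap[K_1,K_1]=1$. And $MN$ is open, because $Z(N)+\beta(nW)$ is open in $A$. This is exactly where finite generation of $G/N$ is used. Finite generation of $M$ then comes for free, since $M\cong MN/N$ is open in $G/N$, so your counting of translates is unnecessary.
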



\noindent \textbf{Outline.} In Section \ref{sec:technical} we establish the technical results needed for the proofs of Theorems \ref{TeoA} and \ref{TeoC}. In Section \ref{sec:commensurations} we explore commensurations of pro-$\C$ groups and prove Theorems \ref{TeoA}, \ref{TeoF} and \ref{TeoC}. In Section \ref{sec:examples} we collect examples of profinite groups with finite outer automorphism group so that condition $(ii)$ of Theorem \ref{TeoA} becomes automatic. In Section \ref{sec:commensurator} we give an application of the commensurator of a profinite group (as defined in \cite{commensurators}) to obtain a criterion for the existence of a completion for a commensuration between groups with trivial virtual center. In Section \ref{sec:graphs} we conclude the paper by generalizing Theorem \ref{TeoA} to commensurating graphs of groups, establishing pro-$\C$ versions of results presented in the Appendix of \cite{Minasyan}.

\section{Technical background}\label{sec:technical}

In this section we prove some technical results that are direct analogs of results about abstract groups in \cite{Minasyan}. The goal here is to establish the following:

\begin{theorem}\label{TechA}
    Let $G$ be a profinite group containing an open normal subgroup $H$ that can be written as an internal direct product $N\times K$, where $N$ is normal in $G$ and $K$ has finitely generated abelianization. Then there exists a closed subgroup $K'$ of $G$ with the following properties:
    \begin{itemize}
        \item[(i)] $G$ is virtually an internal direct product $N\times K'$;
        \item[(ii)] $K'$ is contained in $H$;
        \item[(iii)] $K'$ is normal in $G$.
    \end{itemize}
\end{theorem}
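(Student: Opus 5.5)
The plan follows Minasyan--Touikan's abstract argument, adapting each step to the profinite topology. Since $N\times K=H$ is normal in $G$ and $N$ is normal in $G$, conjugation by $g\in G$ sends $K$ to another closed complement $K^g$ of $N$ in $H$. Every such complement is the graph of a continuous homomorphism: there is a continuous homomorphism $\varphi_g\colon K\to N$ with $K^g=\{\varphi_g(k)k : k\in K\}$. Because $N$ and $K$ commute, $\varphi_g$ takes values in $Z(N)$ modulo the identification. More precisely, $[\varphi_g(k),n]=1$ for all $n\in N$, since $\varphi_g(k)k$ must centralize $N^g=N$ exactly as $K^g$ does. Thus $\varphi_g$ factors through the abelianization $K^{ab}$ and lands in the abelian group $Z(N)$.

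We want $K'$ to be an intersection of finitely many conjugates of a suitable open subgroup of $K$, so that it is normal. The natural candidate is
\[
K_0:=\bigcap_{g\in G}\ker(\varphi_g)\le K,\qquad K'=\bigcap_{g\in G/H} K_0^{\,g}.
\]
On $\ker\varphi_g$ the conjugate $K^g$ agrees with $K$. Since $H$ is open, only finitely many cosets $gH$ matter. Indeed, for $h\in H$ we have $K^h=K$, because $N$ commutes with $K$ and $K$ is normal in $H$. Hence $\varphi_g$ depends only on $gH$, and we get finitely many homomorphisms $\varphi_1,\dots,\varphi_r$. Each $\ker\varphi_i$ is a closed normal subgroup of $K$ containing $[K,K]$.

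The key step, and the main obstacle, is showing that $\ker\varphi_i$ is open in $K$, equivalently that $K^g\cap K$ has finite index in $K$. The image $\varphi_i(K)$ is a quotient of $K^{ab}$, which is topologically finitely generated abelian profinite, but it need not be finite. Bare kernels therefore may not work, and I would instead pass to a finite-index modification. Set $A=Z(N)$. I would use the finite exponent trick: choose $m=[G:H]$ and consider the subgroup $K_1:=\{k^m\}$-generated, i.e.\ $\overline{K^m[K,K]}$, which is open in $K$ because $K^{ab}$ is finitely generated. I would then try to show via a transfer/cocycle argument that the cocycle $g\mapsto\varphi_g$ becomes a coboundary after multiplying by $m$. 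This is the profinite analog of the averaging step in \cite{Minasyan}. Concretely, average the complement: define $\psi=\frac{1}{m}$-free replacement $\psi(k)=\prod_{i}\varphi_i(k)$ restricted to $K_1$, and set $K'=\{\psi(k)^{-1}k: k\in K_1\}$ suitably adjusted so that it is $G$-invariant. The verification uses the cocycle identity $\varphi_{gh}=\varphi_g^{\,h}\cdot\varphi_h$, which comes from composing graphs, together with the fact that $A$ is abelian and $G$-invariant. If averaging fails because of the lack of division by $m$, the fallback is to intersect the kernels directly on $K_1$ and argue that $\varphi_i(K_1)=\varphi_i(K)^m$ still suffices.

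Once a closed $G$-invariant complement-like subgroup $K'\le H$ is obtained with $K'\cap N=1$ and $NK'$ open, the three properties follow. Normality in $G$ comes from the invariance. $K'\le H$ holds by construction. $N\times K'$ is internal direct because $K'$ centralizes $N$: elements of $K'$ have the form $ak$ with $a\in Z(N)$ and $k$ commuting with $N$. Finally, $NK'$ has finite index since $NK'\supseteq$ the image of an open subgroup of $K$ modulo $N$.
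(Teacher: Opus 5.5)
Your set-up is sound: each $K^g$ is a complement to $N$ in $H$ lying in $C_H(N)=Z(N)\times K$. Hence it is the graph of a continuous homomorphism $\varphi_g\colon K\to Z(N)$ that factors through $K^{ab}$ and depends only on $gH$. But the step that carries the whole content of the theorem is left unresolved, namely producing a $G$-invariant graph over an open subgroup of $K$. Each concrete route you propose fails. Since $\ker\varphi_g=K\cap K^g$, your $K_0$ is just the normal core of $K$ in $G$, which need not be open in $K$, and passing to $K_1$ does not help. For example, let $G=\mathbb{Z}_p^2\rtimes\langle\sigma\rangle$ with $\sigma(a,b)=(a+b,-b)$, $H=\mathbb{Z}_p^2$, $N=\mathbb{Z}_p\times 0$ and $K=0\times\mathbb{Z}_p$. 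Then $K^\sigma=\{(b,-b)\}$, so $\varphi_\sigma(0,c)=(-c,0)$ is injective and its kernel on every $K_1$ is trivial. This also rules out your fallback of intersecting kernels on $K_1$.

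The averaging idea is the right one, but not in the form you wrote. Summing the cocycle identity over $\Gamma=G/H$ gives $m\varphi_g=\Psi-g\cdot\Psi$ with $\Psi=\sum_h\varphi_h$. Here $g$ must act on $\operatorname{Hom}(K^{ab},Z(N))$ through \emph{both} $Z(N)$ and $K^{ab}\cong Z(N)K/Z(N)$, and $G/H$ generally acts nontrivially on the latter; in the example, $\sigma$ acts on $K$ by $-1$. On the other hand, with the appropriate conventions, invariance of the graph of $\chi$ forces $\chi-g\cdot\chi=\varphi_g$. So no integer multiple of $\Psi|_{K_1}$ works when $m>1$. In the example ($m=2$, $K_1=0\times 2\mathbb{Z}_p$), the graphs $\{(\mp x,x):x\in 2\mathbb{Z}_p\}$ of $\pm\Psi|_{K_1}$ are not $\sigma$-invariant, whereas the invariant complement $\{(a,-2a)\}$ is the graph of $\tfrac12\Psi$.

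What you need is $\chi(x^m)=\Psi(x)$, and this is exactly where the difficulty sits. $K^{ab}$ may have $m$-torsion on which $\Psi$ does not vanish, so $\chi$ is not well defined on $K_1$. The missing idea is the content of the paper's lemma:
\begin{enumerate}
\item Pass to the $\Gamma$-module $M=Z(N)K/[K,K]=Z(N)\times K^{ab}$.
\item Choose an open $\Gamma$-invariant subgroup $V\le M/Z(N)$ that meets the finite $m$-torsion subgroup trivially; this is possible since $K^{ab}$ is finitely generated.
\item On $U=mV$, define the equivariant section $\beta(u)=\sum_{g\in\Gamma}g\cdot\alpha(g^{-1}\cdot v)$, where $mv=u$ and $\alpha$ is the inverse of the isomorphism $K^{ab}\to M/Z(N)$.
\item Take $K'$ to be the preimage of $\beta(U)$ in $Z(N)K$.
\end{enumerate}
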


We start with a preliminary result on profinite modules.

\begin{lemma}
    Let $\Gamma$ be a finite group and $M$ a profinite $\Gamma$-module. If $M$ can be written as an internal direct sum $Z\oplus L$, where $Z$ is a $\Gamma$-submodule and $L$ is a finitely generated subgroup of $M$ that is not necessarily $\Gamma$-invariant, then there exists a finitely generated $\Gamma$-submodule $L'$ of $M$ such that $Z\oplus L'$ is open in $M$.
\end{lemma}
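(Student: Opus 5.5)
The plan is to build a $\Gamma$-equivariant ``correction'' of the projection onto $Z$ along $L$ by averaging over the finite group $\Gamma$, and to take $L'$ to be the image of $L$ under the resulting equivariant endomorphism. This image may meet $Z$ in a small torsion subgroup, which I then cut away by passing to a suitable open submodule.

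\emph{Averaging.} Let $n=|\Gamma|$ and let $p\colon M\to Z$ be the continuous projection along $L$, so that $p|_Z=\mathrm{id}_Z$ and $p(L)=0$. I average it:
\[
q=\sum_{g\in\Gamma} g\circ p\circ g^{-1}\colon M\to Z .
\]
Since $Z$ is a $\Gamma$-submodule, $q$ takes values in $Z$. It is continuous and $\Gamma$-equivariant, and $q|_Z=n\cdot\mathrm{id}_Z$. I then define $r\colon M\to M$ by
\[
r(m)=nm-q(m).
\]
This map $r$ is a continuous $\Gamma$-endomorphism with $r(Z)=0$. Moreover $q\circ r=nq-nq=0$, so $r(M)\subseteq\ker q$. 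I set $L_0=r(M)=r(L)$. It is a $\Gamma$-submodule, and it is finitely generated as the continuous image of the finitely generated group $L$. It is closed because it is the image of a compact set.

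\emph{Openness.} For $l\in L$ we have $r(l)=nl-q(l)$ with $q(l)\in Z$. Hence $Z+L_0\supseteq Z+nL$. The group $L$ is a finitely generated abelian profinite group, so $L/nL$ is finite and $nL$ is closed, hence open in $L$. Since $M=Z\oplus L$ topologically, $Z+nL$ is open in $M$, and so $Z+L_0$ is open in $M$.

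\emph{Directness.} This is the main obstacle. We have $Z\cap L_0\subseteq Z\cap\ker q$, and on $Z$ the map $q$ is multiplication by $n$, so $Z\cap L_0$ lies in the $n$-torsion $L_0[n]$. The group $L_0$ is a finitely generated abelian profinite group, hence a product over primes $\ell$ of finitely generated $\mathbb Z_\ell$-modules. Therefore $L_0[n]$ is finite: only the finitely many primes $\ell\mid n$ contribute, each with finite torsion. I would then choose an open subgroup $U_0\le L_0$ that misses the finitely many nonzero elements of $L_0[n]$, and set
\[
L'=\bigcap_{g\in\Gamma} gU_0 .
\]
This is a finite intersection, so $L'$ is an open $\Gamma$-submodule of $L_0$. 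It is finitely generated, being open in a finitely generated profinite group. By construction $L'\cap Z\subseteq L'\cap L_0[n]=0$. Finally, $Z+L'$ has finite index in $Z+L_0$, which is open in $M$, so $Z+L'$ is closed of finite index in $M$, i.e. open. The sum is direct, as required. The only delicate points are this finiteness of the $n$-torsion of $L_0$, and checking that ``finitely generated'' is used in the topological sense throughout.
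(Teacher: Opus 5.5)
Your proof is correct and is essentially the paper's: your map $r=n-q$ equals $m\mapsto\sum_{g\in\Gamma}g\,\alpha(g^{-1}\pi(m))$, where $\alpha\colon M/Z\to L$ is the inverse of $\pi|_L$, so it is the same averaged section the paper uses. The only difference is where the $n$-torsion is removed: the paper removes it in the domain, restricting to a $\Gamma$-invariant open $V\le M/Z$ that misses $(M/Z)[n]$ and dividing by $n$ to get a genuine equivariant section on $U=nV$, whereas you take the full image $L_0$ and then pass to a $\Gamma$-invariant open subgroup of $L_0$ that avoids $L_0[n]$.
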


\begin{proof}
    We use additive notation. The short exact sequence
    \[
    0\to Z\to M\xrightarrow{\pi} M/Z\cong L\to 0
    \]
    splits via an isomorphism $\alpha:M/Z\to L\leq M$. However, $\alpha$ is not $\Gamma$-equivariant in general. Since $M/Z$ is a finitely generated profinite abelian group, we may write
    \[
    M/Z= \prod_p A_p,
    \]
    where $A_p$ is the unique Sylow pro-$p$ subgroup of $M/Z$. Let $n$ be the order of $\Gamma$. The subgroup
    \[
    T =\{x\in M/Z, nx=0\}
    \]
    is finite, and we may therefore take an open subgroup $V_0$ of $M/Z$ such that $V_0\cap T = 0$. Set
    \[
        V = \bigcap_{g\in\Gamma} g\cdot V_0.
    \]
    It is clear that $V$ is $\Gamma$-invariant and the finiteness of $\Gamma$ implies that $V$ is open in $M/Z$. Also, since $V\cap T=0$, multiplication by $n$ is injective on $V$. Let $U=nV$, which is also open in $M/Z$. We define a homomorphism $\beta:U\to M$ by
    \[
        \beta(u):=\sum_{g\in \Gamma} g\cdot\alpha(g^{-1}\cdot v),
    \]
    where $v$ is the unique element of $V$ such that $nv=u$. It is standard to check that $\beta$ is a continuous $\Gamma$-equivariant homomorphism from the $\Gamma$-module $U$ to $M$. Also, since $\pi\circ\beta = \operatorname{id}_U$, $\beta$ is injective. Set $L'=\beta(U)$ so that $\pi^{-1}(U)=Z+L'$. Since $U$ is open in $M/Z$, $Z+L'$ is open in $M$. Finally, if $x\in Z\cap L'$, $x=\beta(u)$ for some $u\in U$. Therefore
    \[
        0=\pi(x)=\pi\circ\beta(u)=u,
    \]
    which implies that $x=0$.
\end{proof}

\begin{proof}[Proof of Theorem \ref{TechA}]
    Let $Z$ be the center of $N$ so that $ZK=C_G(N)\cap H$. Since both $C_G(N)$ and $H$ are normal in $G$, the same holds for $ZK=Z\times K$. Therefore the derived subgroup $[ZK,ZK]=[K,K]$ of $ZK$ is normal in $G$, and we may consider the quotient
    \[
        M= \frac{ZK}{[ZK,ZK]}=Z\times\frac{K}{[K,K]}
    \]
    as a profinite $G$-module. Since both $H$ and $K$ act trivially on $M$, $M$ has a natural profinite $G/H$-module structure. We denote the finite group $G/H$ by $\Gamma$. Notice that, while $Z$ is indeed a $\Gamma$-submodule of $M$, in general $K/[K,K]$ is not $\Gamma$-invariant. But since $K/[K,K]$ is finitely generated, we may apply the previous lemma to obtain a $\Gamma$-submodule $L'$ of $M$ such that $Z\oplus L'$ is open in $M$.

    We again regard $M$ as a profinite $G$-module. Since $L'$ is normal in $G/[K,K]$, its preimage $K'$ is normal in $G$. Also, since $ZK'$ the preimage of $Z\oplus L'$ in $M$, it is open in $ZK$ and, since $Z\cap [K,K]=1$ in $G$ and $Z\cap L'=0$ in $M$, $Z\cap K'=1$ .

    It follows that $ZK'$ contains an open subgroup of $K$. Hence $NK'=NZK'$ has finite-index in $H$. Moreover, $K'\leq ZK$ centralizes $N$. Also, $N\cap ZK= Z(N\cap K)=Z$, so $N\cap K'=Z\cap K'=1$, which concludes the proof that $NK'=N\times K'$.
\end{proof}

We finish this section with another technical lemma which will be useful in Sections \ref{sec:commensurations} and \ref{sec:graphs}.

\begin{lemma}\label{LemmaOut}
    Let $N$ be a closed normal subgroup of a pro-$\C$ group $G$ such that the image of $G$ in $\operatorname{Out}N$ is finite and let $J$ be a finitely generated free pro-$\C$ subgroup of $G$ such that $J\cap N=1$. Then there exists a subgroup $K\leq NJ$ such that $NK=N\times K$ and the composition $K\cong NK/N \leq NJ/N\cong J$ has open image in $J$. In particular, $NK$ is open in $NJ$.
\end{lemma}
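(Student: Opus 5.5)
We pass to a finite-index subgroup of $J$ acting on $N$ by inner automorphisms, and then use freeness of $J$ to correct the generators by elements of $N$ so that they centralize $N$.

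\textbf{Step 1: reduce to inner action.} Let $\rho: G\to \operatorname{Out}N$ be the natural map. By hypothesis $\rho(G)$ is finite, so $J_0:=J\cap\ker\rho$ is open in $J$. Indeed, $\ker\rho$ has finite index in $G$; it is closed provided $\operatorname{Out} N$ is given a topology in which $\rho$ is continuous. If this is delicate, we may work with $\ker\rho$ directly, using that a finite-index subgroup of a finitely generated profinite group is open. Since $J$ is a finitely generated free pro-$\C$ group, the open subgroup $J_0$ is again free pro-$\C$ on a finite basis $x_1,\dots,x_m$. The Nielsen--Schreier type theorem for pro-$\C$ groups applies because $\C$ is extension-closed, so open subgroups have finite index in $\C$.

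\textbf{Step 2: correct the basis.} For each basis element $x_i$, conjugation by $x_i$ on $N$ is inner, so there is $n_i\in N$ with $x_i y x_i^{-1}=n_i y n_i^{-1}$ for all $y\in N$. Put $y_i:=n_i^{-1}x_i$; then $y_i\in NJ$ and $y_i$ centralizes $N$. By the universal property of the free pro-$\C$ group $J_0$ on $\{x_i\}$, there is a continuous homomorphism
\[
\varphi:J_0\to C_{NJ}(N),\qquad x_i\mapsto y_i .
\]
This uses that the target is pro-$\C$, being a closed subgroup of $G$. Let $\pi:NJ\to NJ/N\cong J$ be the quotient map, which is an isomorphism on $J$ since $J\cap N=1$. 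Then $\pi\circ\varphi$ agrees with the inclusion $J_0\to J$ on generators, hence everywhere. Therefore $\varphi$ is injective.

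\textbf{Step 3: conclude.} Define $K:=\varphi(J_0)$, a closed subgroup of $NJ$ isomorphic to $J_0$. Since $K\le C_G(N)$, the subgroups $N$ and $K$ commute elementwise. If $k\in K\cap N$, then $\pi(k)=1$; because $\pi|_K$ is injective, $k=1$. Hence $NK=N\times K$. The composition $K\cong NK/N\le NJ/N\cong J$ is $\pi|_K$, whose image is $J_0$, which is open in $J$. Finally, $NK=\pi^{-1}(J_0)$ is the preimage of an open subgroup, so it is open in $NJ$.

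\textbf{Main obstacle.} The delicate point is Step 1: ensuring that $J\cap\ker\rho$ is open in $J$ without a topology on $\operatorname{Out}N$. The argument above handles this via finite index plus the open-subgroup theorem for finitely generated profinite groups, available through Nikolov--Segal. Alternatively, we can simply say that $J_0$ has finite index, hence is open in the finitely generated group $J$. Once $J_0$ is known to be open, Step 2 is routine and relies only on freeness, which lets us choose the correcting elements $n_i$ independently for each generator.
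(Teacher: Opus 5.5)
Your proposal is correct and follows the paper's proof step for step: pass to an open subgroup of $J$ that acts on $N$ by inner automorphisms, multiply each free basis element by an element of $N$ so that it centralizes $N$, and use freeness to build a section of $NJ\to NJ/N\cong J$ whose image is $K$. One small simplification is available in Step 1: Nikolov--Segal is not needed, because $\ker\rho = NC_G(N)$ is a product of two closed subgroups of a compact group, hence closed of finite index and therefore open.
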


\begin{proof}
    Let $p:J\to\operatorname{Out}N$ be the natural map, and let $K'$ be an open subgroup of $J$ contained in the kernel of $p$. Then $NK'$ is an open subgroup of $NJ$.

    Since $K'$ is open in $J$, $K'$ is a finitely generated free pro-$\C$ group. Let $\{k_1',\dots,k_n'\}$ be a basis for $K'$. Since $K'$ lies inside $\ker p$, for each $i=1,\dots,n$, there exists $t_i\in N$ such that, for any $n\in N$,
    \[
        n^{k_i'}=n^{t_i}.
    \]
    For $i=1,\dots, n$, let $k_i=k_i' t_i^{-1}$ and let $K=\langle k_1,\dots,k_n\rangle$. We note that the product $NK$ equals $NK'$. But $K$ centralizes $N$, since for any $n\in N$,
    \[
        n^{k_i}=n^{k_i't_i^{-1}}=(n^{k_i'})^{t_i^{-1}}=n^{t_it_i^{-1}}=n.
    \]

    Since $K'$ is free on $\{k_1',\dots,k_n'\}$, the assignment $k_i'\mapsto k_i$ extends to a continuous homomorphism $\varphi:K'\to NJ$. Let $\pi:NJ\twoheadrightarrow J$ be the quotient map modulo $N$. Since $\pi(k_i)=k_i'$, we have $\pi\circ\varphi|_{K'}=\operatorname{id}_{K'}$. That means that $\pi|_K$ is an isomorphism between $K$ and $K'$. Since $\ker(\pi|_K)=K\cap N$, we conclude that $K\cap N=1$.
\end{proof}

\section{Commensurations of pro-$\C$ groups}\label{sec:commensurations}

We say that pro-$\C$ groups $G_1$ and $G_2$ are \textit{commensurable} if there exist a pro-$\C$ group $H$ and continuous monomorphisms
\[
    \iota_1:H \to G_1 \text{ and } \iota_2:H\to G_2
\]
\noindent with open images. A pair $(\iota_1,\iota_2)$ is called a \textit{commensuration} between the pro-$\C$ groups $G_1$ and $G_2$. We will say that $(\iota_1,\iota_2)$ is a commensuration \textit{between} $G_1$ and $G_2$ \textit{over} $H$. We say that the commensuration $(\iota_1,\iota_2)$ is normal if $\iota_i(H)$ is a normal subgroup of $G_i$ for $i=1,2$.

A \textit{co-commensuration} (in the sense of \cite{Minasyan}) of pro-$\C$ groups is a pair $(j_1,j_2)$ of continuous monomorphisms
\[
    j_1:G_1\to K \text{ and } j_2:G_2\to K
\]
with open images, where $K$ is a pro-$\C$ group. We say that pro-$\C$ groups $G_1$ and $G_2$ are \textit{co-commensurable} if there exists a co-commensuration $(j_1,j_2)$ whose domains are $G_1$ and $G_2$, respectively. 

It is clear that co-commensurable groups are commensurable. The converse, however, is not true in general.

We say a commensuration $(\iota_1,\iota_2)$ between pro-$\C$ groups $G_1$ and $G_2$ can be \textit{completed} if there exists a co-commensuration $(j_1,j_2)$, with domains $G_1$ and $G_2$, respectively, such that the diagram 

\[
    \begin{tikzcd}[row sep=.8em, column sep=4em]
& G_1 \arrow[dr, "j_1"] & \\
H \arrow[ur, "\iota_1"] \arrow[dr, "\iota_2"'] & & K \\
& G_2 \arrow[ur, "j_2"'] &
\end{tikzcd}
\]

\noindent commutes. We say such a co-commensuration is a \textit{completion} of the commensuration $(\iota_1,\iota_2)$.


\begin{remark}
    A profinite group is called strongly complete if all of its subgroups of finite-index are open. If $G$ is a profinite group and $H$ is an open subgroup of $G$, then $G$ is strongly complete if and only if so is $H$. For strongly complete profinite groups (with $\C$ the class of all finite groups), the notion of commensurability coincides with the abstract notion. A remarkable theorem of Nikolov and Segal states that all finitely generated profinite groups are strongly complete \cite{strong1} \cite{strong2}.
\end{remark}


We recall that for a free pro-$\C$ amalgamated product $G:=G_1\amalg_H G_2$, the natural images of $G_1$ and $G_2$ in $G$ need not to be faithful. We say that a the free pro-$\C$ amalgamated product of $G_1$ and $G_2$ over $H$ is \textit{proper} if $\varphi_i:G_i\to G_1\amalg_H G_2$ are injective for $i=1,2$.

\begin{example}\label{Ex1}
    Let $F_n$ and $\widehat{F_n}$ denote the free group and the free profinite group of rank $n$, respectively. In \cite{free336}, the author constructs an abstract commensuration between the groups $G_1\cong F_3$ and $G_2\cong F_3$ over $F_{13}$ such that the corresponding amalgamated product $F_3*_{F_{13}} F_3$ has no finite quotients. By passing to profinite completions, we obtain a commensuration
    \[
\begin{tikzcd}[row sep=.8em, column sep=4em]
& \widehat{F_3}  & \\
\widehat{F_{13}} \arrow[ur, "\iota_1"] \arrow[dr, "\iota_2"'] & &  \\
& \widehat{F_3} &
\end{tikzcd}
\]
    of profinite groups whose the associated amalgamated product $\widehat{F_3}\amalg_{\widehat{F_{13}}}\widehat{F_3}$ is trivial.
\end{example}

\begin{lemma}\label{PropernessofEmbed}
    If a commensuration $(\iota_1,\iota_2)$ between pro-$\C$ groups $G_1$ and $G_2$ over a pro-$\C$ group $H$ admits a completion, then the associated free pro-$\C$ amalgamated product $G:=G_1\amalg_H G_2$ is proper.
\end{lemma}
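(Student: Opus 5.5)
The plan is to use the universal property of the free pro-$\C$ amalgamated product. Suppose $(j_1,j_2)$ is a completion of $(\iota_1,\iota_2)$ into a pro-$\C$ group $K$. Then $j_1,j_2$ are continuous homomorphisms into a pro-$\C$ group with $j_1\circ\iota_1=j_2\circ\iota_2$. By the defining universal property of $G=G_1\amalg_H G_2$ (see \cite[9.2]{RZ}), there is a unique continuous homomorphism $\psi:G\to K$ such that
\[
\psi\circ\varphi_i=j_i \quad\text{for } i=1,2,
\]
where $\varphi_i:G_i\to G$ are the natural maps.

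Properness then follows at once. Suppose $g\in G_i$ and $\varphi_i(g)=1$. Then $j_i(g)=\psi(\varphi_i(g))=1$. Since $j_i$ is a monomorphism, $g=1$. Hence each $\varphi_i$ is injective, which is exactly the definition of $G$ being proper.

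The only point needing care is that the universal property applies to the pair $(j_1,j_2)$. This requires $K$ to be pro-$\C$, which holds by the definition of a co-commensuration, and it requires the compatibility condition on $H$, which is the commutativity of the completion diagram. The openness of $j_i(G_i)$ in $K$ is not used. No other obstacle is expected.
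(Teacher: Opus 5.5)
Your proposal is correct and matches the paper's proof: both factor $j_i=\Phi\circ\varphi_i$ through the universal property of $G_1\amalg_H G_2$ and deduce that each $\varphi_i$ is injective because $j_i$ is. You are also right that only the pro-$\C$ target and the compatibility $j_1\circ\iota_1=j_2\circ\iota_2$ are needed, not the openness of $j_i(G_i)$.
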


\begin{proof}
    Let $(j_1,j_2)$ be a completion of the commensuration $(\iota_1,\iota_2)$. By the universal property of the amalgamated product, we obtain the commutative diagram below:
\[
\begin{tikzpicture}[>=stealth]

\node (H) at (0,0) {$H$};
\node (G1) at (2,1) {$G_1$};
\node (G2) at (2,-1) {$G_2$};
\node (A)  at (4,0) {$G_1 \amalg_H G_2$};
\node (K)  at (7,0) {$K$};

\draw[->] (H) -- (G1);
\draw[->] (H) -- (G2);

\draw[->] (G1) -- node[above] {$\varphi_1$} (A);
\draw[->] (G2) -- node[above] {$\varphi_2$} (A);

\draw[->, dashed] (A) -- node[above] {$\Phi$} (K);

\draw[->, bend left=20] (G1) to (K);
\draw[->, bend right=20] (G2) to (K);

\end{tikzpicture}
\]
Since $j_1$ and $j_2$ are injective and $j_i=\Phi\circ\varphi_i$ for $i=1,2$, we conclude that $G_1\amalg_H G_2$ is proper.
\end{proof}

In particular, we conclude that the commensuration of free profinite groups in Example \ref{Ex1} cannot be completed.

\begin{lemma}\label{PropernessLem}
    Let $G:=G_1\amalg_H G_2$ be the amalgamated pro-$\C$ product of the commensurable pro-$\C$ groups $G_1$ and $G_2$ over a common open subgroup $H$. If $G$ is proper, then the normal core $H_G$ of $H$ in $G$ is open in $H$. In particular, $H_G$ is a common open normal subgroup of $G_1$ and $G_2$.
\end{lemma}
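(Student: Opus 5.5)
The plan is to use properness to compare the topology of each $G_i$ with the topology induced from $G$, and then to produce an explicit subgroup of $H$ that is open in $H$ and normal in $G$. Throughout we identify $G_1$, $G_2$ and $H$ with their images in $G$; properness makes the maps $\varphi_i$ injective, and this identification is what makes that legitimate.

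First, for $i=1,2$ let $N_i$ be the normal core of $H$ in $G_i$. Since $H$ is open in $G_i$, it has finite index there, so $N_i$ is an intersection of finitely many open conjugates. Hence $N_i$ is open and normal in $G_i$. Next, each $\varphi_i$ is a continuous injection of a compact group into a Hausdorff group, so it is a homeomorphism onto its image. Thus the topology of $G_i$ coincides with the subspace topology from $G$. Because $G$ is profinite, its open normal subgroups form a basis of neighbourhoods of $1$. We can therefore choose open normal subgroups $U_1,U_2$ of $G$ with $U_i\cap G_i\subseteq N_i$. Put $U=U_1\cap U_2$.

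The key observation is that $U\cap G_1\subseteq N_1\subseteq H$, which gives $U\cap G_1=U\cap H$; in the same way $U\cap G_2=U\cap H$. Now $U\cap G_i$ is normal in $G_i$, being the intersection of $G_i$ with a normal subgroup of $G$. So the closed subgroup $U\cap H$ is normalized by both $G_1$ and $G_2$, and hence by the abstract subgroup they generate. That subgroup is dense in $G$. The normalizer of a closed subgroup is closed, so $U\cap H$ is normal in $G$.

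Finally, $U\cap H$ is contained in $H$ and is normal in $G$, so $U\cap H\subseteq H_G\subseteq H$. Since $U$ is open in $G$, the subgroup $U\cap H$ is open in $H$, and therefore $H_G$ is open in $H$. As $H$ is open in each $G_i$, $H_G$ is open in $G_1$ and in $G_2$, and it is normal in both because it is normal in $G$. This gives the final assertion.

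The main obstacle is the first step, namely extracting the single open normal subgroup $U$ of $G$. This is exactly where properness enters: without it the induced topology on $G_i$ could be strictly coarser, or $G_i$ could collapse as in Example \ref{Ex1}. The remaining steps are formal.
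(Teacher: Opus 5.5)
Your proof is correct, and its skeleton matches the paper's. Both arguments produce an open subgroup of $H$ that is normal in $G_1$ and in $G_2$, and then conclude that it lies in $H_G$. The difference is where that subgroup comes from.

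The paper invokes the properness criterion \cite[9.2.4]{RZ}. This gives families $\{U_{i,\lambda}\}$ of open normal subgroups of $G_i$ with trivial intersection and $U_{1,\lambda}\cap H=U_{2,\lambda}\cap H$. A compactness argument then selects a finite subfamily whose intersections $W_1=W_2$ already lie in $H$.

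You instead use that each $\varphi_i$ is a topological embedding (compact into Hausdorff), and you take $U\cap H$ for a single open normal subgroup $U$ of $G$. In effect you are inlining the easy direction of that criterion: the pulled-back subgroups $U\cap G_1$ and $U\cap G_2$ are exactly a compatible pair of the kind it provides. The compactness step is absorbed into the fact that $G_i$ carries the subspace topology.

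Your version is more self-contained. It also makes explicit a step the paper leaves tacit: a closed subgroup normalized by $G_1$ and $G_2$ is normal in $G$, because $\langle G_1,G_2\rangle$ is dense and normalizers of closed subgroups are closed. Incidentally, the cores $N_i$ are not needed; choosing $U_i$ with $U_i\cap G_i\subseteq H$ suffices for the rest of your argument.
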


\begin{proof}
    Since $G$ is proper, there exist a set $\Lambda$ and, for each $i=1,2$, a collection $\mathcal{U}_i=\{U_{i,\lambda},\lambda\in\Lambda\}$ of open normal subgroups of $G_i$ such that
    \[
        \bigcap_{\lambda\in\Lambda} U_{i,\lambda}=1, \text{ for } i=1,2 \text{ and } U_{1,\lambda}\cap H=U_{2,\lambda}\cap H, \text{ for all } \lambda\in\Lambda
    \]
    (see \cite[9.2.4]{RZ}). Since $H$ is open in both $G_1$ and $G_2$ and $\bigcap_{\lambda}U_{i,\lambda}=1$, compactness implies that there is some finite subset $\Lambda_f$ of $\Lambda$ such that $W_i:=\bigcap_{\lambda\in\Lambda_f}U_{i,\lambda}\leq H$ for $i=1,2$. Since $W_1=W_1\cap H=W_2\cap H= W_2$, we conclude that $W_1=W_2$ is contained in $H_G$ and is open in both $G_1$ and $G_2$.
\end{proof}

We conclude from Lemmas \ref{PropernessofEmbed} and \ref{PropernessLem} that a commensuration of pro-$\C$ groups that can admits a completion must extend a normal commensuration in the following sense: if $(\iota_1,\iota_2)$ is a commensuration of pro-$\C$ groups $G_1$ and $G_2$ over a pro-$\C$ group $H$ and admits a completion, then there must exist an open subgroup $N$ of $H$ such that the induced commensuration $(\iota_1|_N,\iota_2|_N)$ has normal images in $G_1$ and $G_2$. 











For a commensuration $(\iota_1,\iota_2)$ between pro-$\C$ groups $G_1$ and $G_2$ over a pro-$\C$ group $H$, we obtain group homomorphisms $\mathcal{O}_i:G_i\to\operatorname{Out}H_G$ for $i=1,2$. The image of $G_i$ in $\operatorname{Out}H_G$ is finite since $H_G$ is open in $G_i$ for $i=1,2$ by Lemma \ref{PropernessLem}.

The group $\operatorname{Out}H_G$ is a Hausdorff, totally disconnected group with the topology induced by the compact-open topology on $\operatorname{Aut}H_G$. If $H_G$ is finitely generated, then $\operatorname{Aut}H_G$ is a profinite group. The reader may consult Section 4.4 of \cite{RZ} for details. If $G:=G_1\amalg_H G_2$ is proper, we may regard $H_G$ as a normal subgroup of $G$ and make $G$ act on $H_G$ by conjugation to obtain a continuous homomorphism $\mathcal{O}:G\to\operatorname{Out}H_G$. Notice that $\mathcal{O}(G)=\langle\mathcal{O}_1(G_1),\mathcal{O}_2(G_2)\rangle$.


\begin{lemma}\label{OutFin}
    Let $(\iota_1,\iota_2)$ be a commensuration of pro-$\C$ groups $G_1$ and $G_2$ over a pro-$\C$ group $H$. If $(\iota_1,\iota_2)$ admits a completion, then the image of $G:=G_1\amalg_H G_2$ in $\operatorname{Out}H_G$ is finite.
\end{lemma}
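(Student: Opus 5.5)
We use the completion $(j_1,j_2)$ to factor the action of $G$ on $H_G$ through a finite group built from $K$. By Lemma \ref{PropernessofEmbed}, $G$ is proper, so $G_1,G_2,H$ and $H_G$ may be regarded as subgroups of $G$, and $\mathcal{O}:G\to\operatorname{Out}H_G$ is defined. By the universal property there is a continuous homomorphism $\Phi:G\to K$ with $\Phi\circ\varphi_i=j_i$. In particular $\Phi$ is injective on $G_1$, on $G_2$, and hence on $H_G\leq H$. Set $N:=\Phi(H_G)$, a closed subgroup of $K$ isomorphic to $H_G$ via $\Phi$. Since $H_G$ is open in $G_1$ by Lemma \ref{PropernessLem} and $j_1(G_1)$ is open in $K$, the subgroup $N$ is open in $K$.

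Next I would show that $\mathcal{O}$ factors through $K$. The key observation is that $N$ is normalized by $\Phi(G)$: because $H_G$ is normal in $G$, for each $g\in G$ we have $\Phi(g)N\Phi(g)^{-1}=\Phi(gH_Gg^{-1})=N$. So $\Phi(G)\leq N_K(N)$, and conjugation gives a homomorphism $c:N_K(N)\to\operatorname{Out}N$. Transport this via the isomorphism $\Phi|_{H_G}:H_G\to N$ to an isomorphism $\operatorname{Out}H_G\cong\operatorname{Out}N$. For $g\in G$ and $h\in H_G$ we have $\Phi(ghg^{-1})=\Phi(g)\Phi(h)\Phi(g)^{-1}$. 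This gives the identity
\[
\mathcal{O}(g)=(\Phi|_{H_G})^{*}\bigl(c(\Phi(g))\bigr),
\]
so $\mathcal{O}(G)$ is isomorphic to $c(\Phi(G))\leq c(N_K(N))$.

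It then remains to check that $c(N_K(N))$ is finite. Inner automorphisms of $N$ come from $N$ itself, so $c$ kills $N$. Hence $c(N_K(N))$ is a quotient of $N_K(N)/N$. Since $N$ is open in $K$, the index $[N_K(N):N]\leq[K:N]$ is finite, and therefore $\mathcal{O}(G)$ is finite.

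The only delicate point is the injectivity of $\Phi$ on $H_G$, which is what makes the transport of outer automorphisms legitimate. It follows because $H_G\subseteq\varphi_1(G_1)$ and $\Phi\circ\varphi_1=j_1$ is injective. No topology on $\operatorname{Out}$ is needed, since finiteness is purely algebraic.
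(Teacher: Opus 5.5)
Your proof is correct and follows essentially the same route as the paper: you factor the outer action of $G$ on $H_G$ through the completion group $K$, where the image of $H_G$ is open, so the action factors through a finite quotient. The only cosmetic difference is that the paper replaces $K$ by the subgroup generated by $j_1(G_1)$ and $j_2(G_2)$ to make the image of $H_G$ normal, whereas you work inside $N_K(N)\supseteq\Phi(G)$ and write out the identification $\operatorname{Out}H_G\cong\operatorname{Out}N$ explicitly.
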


\begin{proof}
    Recall that $G$ is proper and $H_G$ is open in both $G_1$ and $G_2$ by Lemma \ref{PropernessLem}. Let $(j_1,j_2)$ be a completion of the commensuration $(\iota_1,\iota_2)$. We may assume $K=\operatorname{codom}(j_1)=\operatorname{codom}(j_2)$ is generated by the images of $G_1$ and $G_2$. The image of $H_G$ in $K$ is normal, and thus we obtain a natural homomorphism $\varphi:K\to\operatorname{Out}H_G$. Since $H_G$ is open in $K$, $\varphi(K)=\langle\mathcal{O}(G_1),\mathcal{O}(G_2)\rangle$ is finite. To conclude that the image of $G$ in $\operatorname{Out}H_G$ is finite, recall that $\mathcal{O}(G)=\langle\mathcal{O}_1(G_1),\mathcal{O}_2(G_2)\rangle$. 
\end{proof}

We show an example of a normal commensuration for which the above condition does not hold.

\begin{example}\label{ExGL}
    Let $\pi$ be a finite set of primes and let $H=\prod_{p\in\pi}\Z_p^{n_p}$, where $n_p\geq 0$ for every $p\in\pi$. Let $K_1$ and $K_2$ be distinct maximal finite subgroups of $\operatorname{Aut}H\cong\prod_{p\in\pi} \operatorname{GL}_{n_p}(\Z_p)$. Consider the profinite groups $G_i:=H\rtimes K_i$ with the natural action. Let $\iota_i:H\to G_i$ be the natural inclusions for $i=1,2$. Then $(\iota_1,\iota_2)$ is a normal commensuration between $G_1$ and $G_2$, but $(\iota_1,\iota_2)$ does not admit a completion. In fact, the subgroup generated by the images of $G_1$ and $G_2$ in $\operatorname{Out}H=\operatorname{Aut}H$ is
    \[
        K:=\langle K_1, K_2\rangle,
    \]  
    which is infinite since $K_1$ and $K_2$ are distinct maximal finite subgroups of $\operatorname{Aut}H$.
\end{example}

The following stronger statement holds:

\begin{example}
    Let $G_1$ and $G_2$ as in \ref{ExGL}. If $K_1$ and $K_2$ are not conjugate in $\operatorname{Aut}H$, then there is no normal commensuration between $G_1$ and $G_2$, over any profinite group $N$, for which $\mathcal{O}(G_1\amalg_N G_2)$ is finite. In particular, the groups $G_1$ and $G_2$ are not co-commensurable.
\end{example}

\begin{proof}
    Let $(\iota_1,\iota_2)$ be a normal commensuration between $G_1$ and $G_2$ over a profinite group $N$. We may find an open subgroup $M\leq_o N$ such that $\iota_i(M)$ is normal in $G_i$ and contained in $H$ for $i=1,2$. Also, notice that $M\cong H$. It is clear that $(\iota_1,\iota_2)$ admits a completion if and only if $(\iota_1|_M,\iota_2|_M)$ does. The action of $K_i$ on $M$ is faithful for $i=1,2$. Hence, the images of $G_1$ and $G_2$ generate the subgroup
    \[
        K:=\langle K_1^{\varphi_1},K_2^{\varphi_2}\rangle, \text{ for some $\varphi_1,\varphi_2\in \operatorname{Aut}H$},
    \]
    which is infinite since $K_1$ and $K_2$ are non-conjugate maximal finite subgroups of $\operatorname{Aut}H$.
\end{proof}

We can now give the proof of Theorem \ref{TeoA}.

\begin{proof}[Proof of Theorem \ref{TeoA}]
    By Lemmas \ref{PropernessofEmbed} and \ref{OutFin}, a commensuration that admits a completion satisfies $(i)$ and $(ii)$. Conversely, if a commensuration satisfies $(i)$ and $(ii)$, we denote by $G$ the associated free pro-$\C$ amalgamated product. Since $G$ is proper, we may view $H$ as a subgroup of $G$. By Lemma \ref{PropernessLem}, $H_G$ is open in $G_1$ and $G_2$. The quotient $G/H_G$ is naturally isomorphic to the free pro-$\C$ amalgamated product
    \[
        (G_1/H_G)\amalg_{H/H_G}(G_2/H_G)
    \]
    which is virtually a free pro-$\C$ group since $G_1/H_G$, $G_2/H_G$ and $H/H_G$ are finite groups in $\C$. Let $J$ be an open normal subgroup of $G/H_G$ which is free pro-$\C$. Since $G/H_G$ is finitely generated, so is $J$.

    Let $\pi$ be the canonical projection $G\twoheadrightarrow G/H_G$. Since $J$ is free, the short exact sequence
    \[
        1\to H_G\to\pi^{-1}(J)\to J\to 1
    \]
    splits; thus $\pi^{-1}(J)=H_G\rtimes J'$, in which $J'$ is a lift of $J$ to $\pi^{-1}(J)$. By $(ii)$, the image of $G$ in $\operatorname{Out}H_G$ is finite and we may therefore apply Lemma \ref{LemmaOut} to find a subgroup $K'$ of $H_GJ'$ such that $H_G K'=H_G\times K'$ and $\pi(K')$ is an open subgroup of $J$.

    Since $J$ is finitely generated, it has only finitely many open subgroups having the same index in $J$ as $\pi(K')$. We may therefore pick an open characteristic subgroup $\overline{K}$ of $J$ inside $\pi(K')$. We set $K:=\pi^{-1}(\overline{K})\cap K'$. Since $\overline{K}$ is characteristic in $J$, it is normal in $G/H_G$ and therefore $\pi^{-1}(\overline{K})=H_GK=H_G \times K$ is an open normal subgroup of $G$.

    We are in the setting of Theorem \ref{TechA}: $H_G$ is a normal subgroup of $G$, and $K$ is a finitely generated subgroup of $G$ such that $H_GK=H_G\times K$ is an open normal subgroup of $G$. Therefore we can pick a subgroup $K_\star$ of $H_GK$ such that $K_\star$ is normal in $G$, $H_G K_\star=H_G\times K_\star$ is open and $\pi(K_\star)$ is open in $J$.

    We will show that both $G_1$ and $G_2$ are naturally embedded as open subgroups of $G/K_\star$. Notice that $\pi(K_\star)\leq J$ and therefore $\pi(K_\star)$ is torsion-free. Since the images of $G_1$ and $G_2$ in $G/H_G$ are finite and $K_\star\cap H_G=1$, $K_\star$ intersects both $G_1$ and $G_2$ trivially. We may therefore identify $H$, $G_1$ and $G_2$ as open subgroups of $G/K_\star$. To finish the proof, it remains to show that $G_1$ and $G_2$ are open in $G/K_\star$. But we may see that $G/H_GK_\star$ is finite because $\pi(K_\star)$ is open in $J$, and $J$ is open in $G/H_G$. Hence, we see that $H_GK_\star\cong H_G$ (and consequently $H$, $G_1$ and $G_2$) is an open subgroup of $G/K_\star$.
\end{proof}

\begin{corollary}\label{precisodisso}
    Let $(\iota_1,\iota_2)$ be a commensuration between pro-$\C$ groups $G_1$ and $G_2$ over a pro-$\C$ group $H$. Assume that $(\iota_1,\iota_2)$ admits a completion. Then $G:=G_1\amalg_H G_2$ is virtually a direct product $H_G\times K_\star$ with $K_\star\cap H=1$.
\end{corollary}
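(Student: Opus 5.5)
The corollary should fall out of the proof of Theorem \ref{TeoA}. Since $(\iota_1,\iota_2)$ admits a completion, Lemmas \ref{PropernessofEmbed} and \ref{OutFin} give conditions $(i)$ and $(ii)$. Under these conditions, the converse half of the proof of Theorem \ref{TeoA} constructs a closed normal subgroup $K_\star$ of $G$ with three properties: $H_GK_\star=H_G\times K_\star$, this product is open in $G$, and $K_\star\cap G_i=1$ for $i=1,2$. So the plan is to rerun that construction verbatim and then check the two claims.

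The steps, in order, are as follows.

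First, $G$ is proper, and by Lemma \ref{PropernessLem} $H_G$ is open in $G_1$ and $G_2$.

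Second, $G/H_G\cong (G_1/H_G)\amalg_{H/H_G}(G_2/H_G)$ is virtually free pro-$\C$. We take an open normal free subgroup $J$ of it, which is finitely generated, and split $\pi^{-1}(J)=H_G\rtimes J'$.

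Third, we apply Lemma \ref{LemmaOut} using $(ii)$ to get $K'$ with $H_GK'=H_G\times K'$ and $\pi(K')$ open in $J$. We then shrink to $K=\pi^{-1}(\overline K)\cap K'$, where $\overline K$ is characteristic in $J$, so that $H_G\times K$ is open and normal in $G$.

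Fourth, we apply Theorem \ref{TechA} with $N=H_G$. Its hypothesis that $K$ has finitely generated abelianization holds because $K\cong\overline K$ is finitely generated. This yields $K_\star$, normal in $G$, with $H_G\times K_\star$ open in $G$.

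This gives the virtual direct product decomposition $H_G\times K_\star$.

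For $K_\star\cap H=1$, take $x\in K_\star\cap H$. Its image $\pi(x)$ lies in $\pi(K_\star)\le J$, which is torsion-free since $J$ is free pro-$\C$. It also lies in $\pi(H)=H/H_G$, which is finite. Hence $\pi(x)=1$, so $x\in K_\star\cap H_G=1$.

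The only delicate point is making sure $\pi(K_\star)\le J$. This holds because $K_\star\le H_GK\le\pi^{-1}(J)$ by item (ii) of Theorem \ref{TechA}. Torsion-freeness of free pro-$\C$ groups is standard, and everything else is already contained in the proof of Theorem \ref{TeoA}.
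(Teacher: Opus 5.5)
Your proposal is correct and follows essentially the same route as the paper. The paper reads the corollary off from the construction of $K_\star$ in the proof of Theorem \ref{TeoA}, and your argument for $K_\star\cap H=1$ (finite image in $H/H_G$ against torsion-free $\pi(K_\star)\le J$, then $K_\star\cap H_G=1$) is the one the paper uses there to show $K_\star\cap G_i=1$.
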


By the above corollary, if a commensuration $(\iota_1,\iota_2)$ between pro-$\C$ groups $G_1$ and $G_2$ over a pro-$\C$ group $H$ admits a completion, then $H_G$ is a virtual direct factor of $G:=G_1\amalg_H G_2$. The converse also holds when $G$ is proper:

\begin{corollary}\label{commDIR}
    Let $(\iota_1,\iota_2)$ be a commensuration between pro-$\C$ groups $G_1$ and $G_2$ over a pro-$\C$ group $H$. Assume that the free pro-$\C$ amalgamated product $G:=G_1\amalg_H G_2$ is proper. Then $(\iota_1,\iota_2)$ admits a completion if and only if $H_G$ is a virtual direct factor of $G$.
\end{corollary}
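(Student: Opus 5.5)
The plan is to prove the two implications separately. The forward one follows from Corollary \ref{precisodisso}. The reverse one is a shortened version of the last step in the proof of Theorem \ref{TeoA}.

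\emph{Only if.} Suppose $(\iota_1,\iota_2)$ admits a completion. Corollary \ref{precisodisso} then gives an open subgroup of $G$ that is an internal direct product $H_G\times K_\star$. By definition this makes $H_G$ a virtual direct factor of $G$, so nothing more is needed here.

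\emph{If.} Assume $G$ is proper and that some open subgroup $U\leq G$ decomposes as $U=H_G\times K$ with $K$ closed. Since $G$ is proper, Lemma \ref{PropernessLem} shows that $H_G$ is open in $H$, in $G_1$ and in $G_2$. The idea is to quotient by a normal subgroup of $G$ meeting $H_G$ trivially. I would then check that $G_1$ and $G_2$ embed in the quotient with open image, exactly as at the end of the proof of Theorem \ref{TeoA}.

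The main obstacle is that $K$ need not be normal in $G$. I would fix this by replacing $K$ with $K_\star:=\bigcap_{g\in G}K^g$. Only finitely many conjugates are involved, because the normal core $U_G$ of $U$ is open and we may first replace $K$ by $K\cap U_G$. Concretely:
\begin{itemize}
\item Put $K_0:=K\cap U_G$. Then $H_GK_0=(H_G\times K)\cap U_G$ is open, since $H_G\leq U_G$.
\item Each conjugate $K_0^g$ lies in $U_G$ and centralizes $H_G^g=H_G$.
\item $K_0^g\cap H_G=1$ and $H_GK_0^g=(H_GK_0)^g$ is open.
\end{itemize}
Set $K_\star:=\bigcap_g K_0^g$, taken over finitely many coset representatives of $U_G$. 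Then $K_\star$ is closed, normal in $G$, meets $H_G$ trivially and centralizes it.

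Openness of $H_GK_\star$ is cleanest in the quotient $\bar G=U_G/H_G$. Each $K_0^g$ maps isomorphically onto an open subgroup of $\bar G$. On the intersection, injectivity shows that $K_\star$ maps onto $\bigcap_g\overline{K_0^g}$, a finite intersection of open subgroups, hence open. A careful check is needed that the image of the intersection equals the intersection of the images. This holds because all the $K_0^g$ are complements to $H_G$ in subgroups of $H_G\times C_{U_G}(H_G)$. I expect this to be the fiddliest point. If it fails, I would instead argue as in the proof of Theorem \ref{TechA}, passing to the abelianization-free step by taking the core of the image in $G/H_G$ and pulling back inside $H_GK_0$.

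Finally, $G_i\cap K_\star=1$. Indeed, $H_G\cap K_\star=1$ forces $K_\star$ to embed in the virtually free pro-$\C$ group $G/H_G$, where it has torsion-free image (replace $K_\star$ by its intersection with the preimage of a free open normal subgroup $J$). Meanwhile $G_i$ has finite image there. Hence $G_i$ and $H$ embed in $G/K_\star$. Moreover $H_GK_\star/K_\star\cong H_G$ is open in $G/K_\star$, so $G_1$ and $G_2$ are open, and the natural maps form a completion.
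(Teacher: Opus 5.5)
Your \emph{only if} direction is the paper's. The \emph{if} direction has a genuine gap at the step you flagged, and that step is false. For $\bar x\in\bigcap_g\overline{K_0^g}$ the lift of $\bar x$ to each $K_0^g$ is unique. However, two such lifts differ by an element of $H_G\cap C_G(H_G)=Z(H_G)$, so they need not agree. Hence $K_\star$ need not map onto $\bigcap_g\overline{K_0^g}$ unless $Z(H_G)=1$; for the same reason $H_G\times C_{U_G}(H_G)$ is not a direct product in general.

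Here is a counterexample inside the hypotheses of the corollary. Let $\C$ be the class of finite $2$-groups, $G_1=G_2=\Z_2\times C_2$ and $H=\Z_2\times 1$. Then $G=\Z_2\times(C_2\amalg C_2)=\Z_2\times(\Z_2\rtimes C_2)$ is proper and $H_G=H$ is central. The subgroup $U=H_G\times T$, with $T\cong\Z_2$ the translation subgroup of $\Z_2\rtimes C_2$, has index $2$. Identifying $T$ with $\Z_2$, the diagonal $K=\{(s,s):s\in\Z_2\}$ satisfies $U=H_G\times K$. Conjugation by the generator $a$ of the $C_2$ factor of $G_1$ gives $K^a=\{(s,-s):s\in\Z_2\}$. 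Your recipe therefore gives $K_\star=K\cap K^a=1$, so $H_GK_\star=H_G$ has infinite index, even though $\overline{K}=\overline{K^a}=U/H_G$. So your construction does not produce a completion in general.

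Your fallback is only a sketch. Taking the core of $\overline{K_0}$ in $G/H_G$ and pulling back gives $K_1\leq K_0$ with $H_G\times K_1$ open and normal in $G$, but $K_1$ is still not normal. To finish you would have to invoke Theorem~\ref{TechA}, after checking that $K_1$ is finitely generated; it is, being isomorphic to an open subgroup of the finitely generated group $G/H_G$. That repairs the argument, but it just re-runs the second half of the proof of Theorem~\ref{TeoA}.

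The observation you are missing is that no construction is needed. The conjugation homomorphism $G\to\operatorname{Out}H_G$ is trivial on $H_G$, which acts by inner automorphisms, and on $K$, which centralizes $H_G$. So its kernel contains the open subgroup $H_G\times K$, and its image is finite. That is condition (ii) of Theorem~\ref{TeoA}, and properness is assumed, so Theorem~\ref{TeoA} gives the completion. This is the paper's two-line proof.
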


\begin{proof}
    If $(\iota_1,\iota_2)$ admits a completion, then $H_G$ is a virtual direct factor of $G$ by Corollary \ref{precisodisso}. Conversely, if $G$ is virtually $H_G\times M$ for some closed subgroup $M$ of $G$, then since $M$ centralizes $H_G$, the natural image of $G$ in $\operatorname{Out}H_G$ factors through the finite group $G/(H_G\times M)_G$ and is therefore finite. So $(\iota_1,\iota_2)$ admits a completion by Theorem \ref{TeoA}.
\end{proof}

\begin{remark}
    We may prove Theorem \ref{TeoA} when $\C$ is the class of all finite groups without the technical results of Section \ref{sec:technical}. In fact, Lemmas \ref{PropernessofEmbed} and \ref{OutFin} show the necessity of conditions $(i)$ and $(ii)$. Conversely, if $(\iota_1,\iota_2)$ is a commensuration over $H$ satisfying $(i)$ and $(ii)$, then $(\iota_1|_{H_G},\iota_2|_{H_G})$ is a normal commensuration between $G_1$ and $G_2$ by Lemma \ref{PropernessLem}. We may see $(\iota_1|_{H_G},\iota_2|_{H_G})$ as an abstract commensuration between the  underlying abstract groups $G_1$ and $G_2$. By Theorem A of \cite{Minasyan}, this commensuration admits an abstract completion $(j_1,j_2)$. By taking the open subgroups of $H_G$ as a fundamental system of neighborhoods of $1$ in $K=\operatorname{codom}(j_1)=\operatorname{codom}(j_2)$, $(j_1,j_2)$ becomes the desired completion of the commensuration $(\iota_1,\iota_2)$ in the category of profinite groups. However, when $\C$ does not contain all finite groups, $K$ might not be pro-$\C$.
\end{remark}





If $(\iota_1,\iota_2)$ is a normal commensuration between profinite groups $G_1$ and $G_2$ over a finitely generated profinite group $H$, the free profinite amalgamated product $G_1\amalg_HG_2$ is proper. In such a case, the group $\operatorname{Out}H$ is profinite. Therefore:

\begin{corollary}\label{col000}
    A normal commensuration $(\iota_1,\iota_2)$ between finitely generated profinite groups $G_1$ and $G_2$ over a profinite group $H$ may be completed if and only if $\mathcal{O}:G_1\amalg_H G_2\to\operatorname{Out}H$ has finite image.
\end{corollary}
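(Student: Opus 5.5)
The plan is to deduce Corollary \ref{col000} directly from Theorem \ref{TeoA} with $\C$ the class of all finite groups. The work lies in verifying hypothesis (i) of that theorem and in identifying $H_G$ with $H$, so that condition (ii) becomes the stated condition on $\mathcal{O}:G_1\amalg_H G_2\to\operatorname{Out}H$.

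The first step is to note that, since $G_1$ is finitely generated and $\iota_1(H)$ is open in $G_1$, the group $H$ is finitely generated as well; this uses the profinite Schreier bound for open subgroups.

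The second step is to show that $G:=G_1\amalg_H G_2$ is proper. By \cite[9.2.4]{RZ} it suffices to find families of open normal subgroups $U_{i,\lambda}\trianglelefteq_o G_i$ with trivial intersections and $U_{1,\lambda}\cap H=U_{2,\lambda}\cap H$. Since $H$ is finitely generated, for each $n$ the subgroup $V_n$, defined as the intersection of all open subgroups of $H$ of index at most $n$, is characteristic and open in $H$. Because $H$ is normal in each $G_i$, each $V_n$ is normal in $G_1$ and in $G_2$. Moreover $\bigcap_n V_n=1$, because $H$ is profinite. Setting $U_{1,n}=U_{2,n}=V_n$ gives intersections with $H$ that agree trivially; the $V_n$ are open in $G_i$ because $H$ is, and their intersection is trivial. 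Hence $G$ is proper.

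The third step identifies the core. Once $G$ is proper, $H$ sits in $G$ and is normalized by both $G_1$ and $G_2$, which generate $G$ topologically. Since normalizers of closed subgroups are closed, $H$ is normal in $G$, so $H_G=H$. Condition (ii) of Theorem \ref{TeoA} is then exactly finiteness of $\mathcal{O}(G)\subseteq\operatorname{Out}H$, and Theorem \ref{TeoA} gives the equivalence in both directions.

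The only step needing care is properness. The point is that the $V_n$ must be normal in both $G_i$ simultaneously, and this is exactly what being characteristic in the finitely generated group $H$ guarantees. The remark that $\operatorname{Out}H$ is profinite is not strictly needed for the argument, only for making sense of $\mathcal{O}$ as a continuous map.
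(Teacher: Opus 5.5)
Your proposal is correct and takes the same route as the paper. The paper likewise notes that a normal commensuration over the finitely generated group $H$ gives an automatically proper amalgam $G_1\amalg_H G_2$, so that $H_G=H$ and the corollary is Theorem \ref{TeoA} with $\C$ the class of all finite groups; your argument with the characteristic open subgroups $V_n$ of $H$ (normal in both $G_i$, with trivial intersection) and \cite[9.2.4]{RZ} supplies the proof of properness that the paper only asserts.
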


\begin{example}
    Let $\Gamma$ be an $S$-arithmetic group and denote by $\widehat{\Gamma}$ its profinite completion and by $\overline{\Gamma}$ its completion with respect to the congruence subgroup topology. If $\Gamma$ satisfies the congruence subgroup property, then the groups $\widehat{\Gamma}$ and $\overline{\Gamma}$ are co-commensurable.

    Indeed, let $p:\widehat{\Gamma}\to\overline{\Gamma}$ be natural epimorphism, and let $C$ be the kernel of $p$. If $C$ is finite, then there is an open normal subgroup $N$ of $\widehat{\Gamma}$ such that $N\cap C =1$. Notice that $N$ is naturally isomorphic to an open subgroup of $\overline{\Gamma}$ and that $[N,C]=1$, so that the following diagram commutes:
\[
\begin{tikzcd}
\widehat{\Gamma} \arrow[d, ""'] \arrow[dr, ""] & \\
\overline{\Gamma} \arrow[r, ""'] & \operatorname{Out} N
\end{tikzcd}
\]
Therefore the image of $\widehat{\Gamma}$ in $\operatorname{Out}N$ is equal to the image of $\overline{\Gamma}$ in $\operatorname{Out}N$, which is finite. Since $N$ is finitely generated and normal in both $\widehat{\Gamma}$ and $\overline{\Gamma}$, the result follows from Corollary \ref{col000}.

A well-known conjecture by Serre seeks to classify $S$-arithmetic groups satisfying the congruence subgroup property in terms of their $S$-rank \cite{serreA}. While the general statement is still open, many important cases have been proved \cite{rag} \cite{bamise}.

\end{example}

Since the images of $G_1$ and $G_2$ in $\operatorname{Out}H$ are always finite, condition $(ii)$ of Theorem \ref{TeoA} holds whenever $\operatorname{Out}H$ is nilpotent or locally finite. A landmark result of Zelmanov is that torsion profinite groups are locally finite (see \cite{zelmanov}). Therefore, condition $(ii)$ of Theorem \ref{TeoA} holds if the group $\operatorname{Out}H$ is either a torsion profinite group or a nilpotent group. In Section \ref{sec:examples} we collect examples of profinite groups $H$ with finite outer automorphism group.

We finish this section with a few examples of profinite commensurations that can be completed.

\begin{corollary}\label{col21}
    Let $G_1$ and $G_2$ be virtually procyclic pro-$p$ groups which are commensurable.  Then $G_1$ and $G_2$ are co-commensurable. Further, every commensuration between $G_1$ and $G_2$ over a procyclic subgroup admits a completion.
    
\end{corollary}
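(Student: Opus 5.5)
The idea is to reduce everything to Theorem \ref{TeoA} with $\C$ the class of finite $p$-groups. Condition $(ii)$ should come for free from the structure of $\operatorname{Out}\Z_p$, and condition $(i)$ (properness) should follow from the criterion \cite[9.2.4]{RZ}, because all open subgroups of $\Z_p$ are characteristic.

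\emph{Co-commensurability.} If $G_1$ and $G_2$ are finite, the diagonal-free embeddings $G_1\to G_1\times G_2$ and $G_2\to G_1\times G_2$ have open images, and $G_1\times G_2$ is a finite $p$-group. So assume the groups are infinite, and let $(\iota_1,\iota_2)$ be any commensuration between them over a pro-$p$ group $H'$. Then $H'$ is an infinite virtually procyclic pro-$p$ group, so it contains an open procyclic subgroup $H\cong\Z_p$. The restricted commensuration $(\iota_1|_H,\iota_2|_H)$ is a commensuration over a procyclic group. Once the second assertion is proved, it admits a completion, and that completion is a co-commensuration of $G_1$ and $G_2$.

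\emph{Completion over a procyclic $H$, infinite case.} Here $H\cong\Z_p$, and its closed nontrivial subgroups are exactly the $p^kH$. Each $p^kH$ is characteristic in $H$, and also in every $p^aH$ with $a\le k$. I would proceed as follows.
\begin{itemize}
\item[(a)] Let $N_i$ be the normal core of $\iota_i(H)$ in $G_i$. It is open in $H$, so $N_i=p^{a_i}H$.
\item[(b)] For $k\ge\max(a_1,a_2)$, the subgroup $p^kH$ is characteristic in $N_i\trianglelefteq G_i$, hence normal in $G_i$ and open in $G_i$.
\item[(c)] Put $U_{1,k}=U_{2,k}=p^kH$. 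These are open normal subgroups of $G_1$ and $G_2$ respectively, with $\bigcap_k U_{i,k}=1$ and $U_{1,k}\cap H=U_{2,k}\cap H$.
\end{itemize}
By \cite[9.2.4]{RZ}, the amalgamated product $G=G_1\amalg_H G_2$ is proper, which gives $(i)$. By Lemma \ref{PropernessLem}, $H_G$ is open in $H$, so $H_G\cong\Z_p$. Then $\operatorname{Out}H_G=\operatorname{Aut}\Z_p\cong\Z_p^\times$ is abelian. The images $\mathcal O_1(G_1)$ and $\mathcal O_2(G_2)$ are finite, and two finite subgroups of an abelian group generate a finite subgroup. Since $\mathcal O(G)=\langle\mathcal O_1(G_1),\mathcal O_2(G_2)\rangle$, this gives $(ii)$, and Theorem \ref{TeoA} yields the completion.

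\emph{Finite case.} If $H$ is finite cyclic, then $G_1$ and $G_2$ are finite $p$-groups. Condition $(ii)$ is automatic, since $\operatorname{Out}H_G$ is finite. The main obstacle is $(i)$: the properness of a pro-$p$ amalgam of finite $p$-groups over a cyclic subgroup. For this I would invoke the classical theorem of Higman: the abstract amalgam of two finite $p$-groups over a cyclic subgroup is residually $p$. Equivalently, there are compatible families of normal subgroups as in \cite[9.2.4]{RZ}, and here the trivial subgroup alone suffices once we find a finite $p$-group containing both $G_1$ and $G_2$ compatibly. This again gives properness, and Theorem \ref{TeoA} finishes the proof. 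If citing Higman is not acceptable, this is the step I would expect to require real work.
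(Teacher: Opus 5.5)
Your route is the paper's: reduce to a procyclic common open subgroup, get condition (ii) from $\operatorname{Aut}\Z_p\cong\Z_p^\times$ being abelian, and get condition (i) from properness of pro-$p$ amalgams over procyclic subgroups. The paper simply cites \cite[3.2]{ribes} for that last step.

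\textbf{The gap.} In your infinite case you deduce properness from the compatible family $U_{1,k}=U_{2,k}=p^kH$ alone, and this does not work in the pro-$p$ category. Compatible families of open normal subgroups with trivial intersection do not by themselves give properness. You also need each finite amalgam $(G_1/U_{1,k})\amalg_{H/(H\cap U_{1,k})}(G_2/U_{2,k})$ to be proper, i.e.\ the two finite quotients must embed compatibly in a single finite $p$-group. For profinite groups this is automatic, since amalgams of finite groups are residually finite. Note that the paper uses the ``families $\Rightarrow$ proper'' direction only for profinite groups, in Lemma~\ref{propernessvirt}.

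Your own finite case already exposes the problem: there $U_1=U_2=1$ is a compatible family, yet you rightly say properness still needs Higman. For a concrete failure, amalgamate two copies $\langle r,s\rangle$, $\langle r',s'\rangle$ of $D_8$ along the Klein subgroups $\langle r^2,s\rangle\cong\langle r'^2,s'\rangle$ via $r^2\leftrightarrow s'$, $s\leftrightarrow r'^2$. Suppose both embedded compatibly in a finite $2$-group $P$, and let $s\in\gamma_j(P)\setminus\gamma_{j+1}(P)$ for the lower central series. Then $r^2=[r,s]\in\gamma_{j+1}(P)$, so $s'\in\gamma_{j+1}(P)$. Hence $s=r'^2=[r',s']\in\gamma_{j+2}(P)$, a contradiction.

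\textbf{The fix.} The repair uses the tool you already invoke. Since $p^kH\le H$, the finite amalgams in question are $(G_1/p^kH)*_{H/p^kH}(G_2/p^kH)$. These are amalgams of finite $p$-groups over the cyclic group $H/p^kH$, so Higman's theorem embeds them compatibly in finite $p$-groups $P_k$. The maps $G_i\to G_i/p^kH\hookrightarrow P_k$ agree on $H$, so they factor through $G=G_1\amalg_HG_2$. The kernel of $G_i\to G$ therefore lies in $\bigcap_k p^kH=1$, which gives properness.

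So your finite and infinite cases are really one argument, namely Higman at every finite level, which is essentially the content of \cite[3.2]{ribes}. With this fix, the rest of your proposal is correct and matches the paper. That includes the reduction for co-commensurability, the normality of the characteristic subgroups $p^kH$, and condition (ii).
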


\begin{proof}
    Let $H$ be a common open subgroup of $G_1$ and $G_2$ which is procyclic and let $H_i$ be the normal core of $H$ in $G_i$. Since every subgroup of $H$ is characteristic, $K:=H_1\cap H_2$ is a common open normal subgroup of $G_1$ and $G_2$. Since $K$ is procyclic, $\operatorname{Out}K$ is abelian and therefore condition $(ii)$ of Theorem \ref{TeoA} holds. Also, since $H$ is procyclic, the free pro-$p$ amalgamated product $G_1\amalg_H G_2$ is proper by \cite[3.2]{ribes}.
\end{proof}

\begin{remark}
    As in Corollary \ref{col21}, a normal commensuration between profinite groups over a procyclic group can be completed.
\end{remark}

\begin{example}
    Let $H$ be a finitely generated pro-$p$ group and $Q_1,Q_2\leq\operatorname{Aut}(H)$ be finite groups of order prime to $p$. Let
    \[
        G_i:=H\rtimes Q_i, \text{ for } i=1,2.
    \]
    The natural homomorphism $\Delta:\operatorname{Aut}H\to\operatorname{Aut}(H/\Phi(H))\cong\operatorname{GL}_n(\mathbb{F}_p)$ induces a short exact sequence
    \[
        1\to N\to \operatorname{Out}H\to\Delta(\operatorname{Aut}(H))=\Delta\to1
    \]
    in which the kernel $N$ is pro-$p$ (see \cite[4.5.5]{RZ} or \cite[5.5]{DDMS}). Suppose that the subgroup generated by the images of $Q_1$ and $Q_2$ in $\Delta\leq\operatorname{GL}_n(\mathbb{F}_p)$ has order prime to $p$. Then the profinite groups $G_1$ and $G_2$ are co-commensurable.

    Indeed, let $\pi:\operatorname{Out}H\to\Delta$ be the natural epimorphism and $Q$ be the subgroup generated by the images of $Q_1$ and $Q_2$ in $\Delta$. The preimage of $Q$ in $\operatorname{Out}H$ yields the short exact sequence
    \[
        1\to N\to \pi^{-1}(Q)\to Q\to 1,
    \]
    which splits by the profinite version of Schur-Zassenhaus theorem (see \cite[2.3.15]{RZ}). Furthermore, all complements of $N$ in $\pi^{-1}(Q)$ are conjugate. Denote by $\overline{Q_i}$ the image of $Q_i$ in $\operatorname{Out}H$ for $i=1,2$. The subgroups $\overline{Q_1}$ and $\overline{Q_2}$ lie in complements $C_1$ and $C_2$ of $N$ in $\pi^{-1}(Q)$. Let $\psi\in N$ be such that $C_2^\psi=C_1$, and let $\varphi$ be a representative of $\psi$ in $\operatorname{Aut}H$. Set $G_2':=H\rtimes Q_2^\varphi$ so that the group generated by the images of $G_1$ and $G_2'$ in $\operatorname{Out}H$ is contained in $C_1\cong Q$, which is finite. By Theorem \ref{TeoA}, the (normal) commensuration between $G_1$ and $G_2'$ over $H$ admits a completion. The conclusion follows from the fact that $G_2\cong G_2'$. 
\end{example}

\subsection{Virtual retracts of pro-$\C$ groups.}\label{subsec:virtual_retracts}

Let $G$ be a profinite group. A closed subgroup $N$ of $G$ is said to be a virtual retract of $G$ if there is an open subgroup $K$ of $G$ containing $N$ and an epimorphism $\rho: K\to N$ which restricts to the identity on $N$.

If $N$ is a virtual retract of $G$, the group $G$ is virtually a semidirect product $S\rtimes N$, where $S=\ker\rho$. If $N$ is normal, then $G$ is virtually a direct product $S\times N$. While $S$ is indeed normal in $K$, it is not clear that $G$ is virtually a direct product $M\times N$, with $M$ normal in $G$.

Below we prove Theorem \ref{TeoC}, which states that if $G/N$ is finitely generated, we may conclude that $G$ is virtually a direct product of $N$ with another normal subgroup $M$ of $G$. Moreover, $M$ can be chosen inside any prescribed open subgroup of $G$ containing $N$.

\begin{proof}[Proof of Theorem \ref{TeoC}]
    Since $N$ is a virtual retract of $G$, there is an open subgroup $L\leq_o G$  that retracts onto $N$. Since $L\cap H$ is open in $G$, the subgroup $L'=\bigcap_{g\in G} (L\cap H)^g$ is an open normal subgroup of $G$, which contains $N$ and is contained in $L\cap H$. It follows that $L'$ also retracts onto $N$. Hence, there is a normal subgroup $K$ of $L'$ such that $K\cap N=1$ and $L'=KN$. Since $K$ and $N$ are normal in $L'$, $L'$ can be viewed as a direct product $K\times N$.

    It is clear that $K\cong KN/N$ embeds as an open subgroup of $G/N$ and is, therefore, finitely generated. So we may apply Theorem \ref{TechA} to obtain a finitely generated subgroup $M$ of $KN$ which is normal in $G$ and such that $MN=M\times N$ and $MN$ is open in $G$. Since $M\leq L'\leq L\cap H$, $M$ is contained in $H$.
\end{proof}

Our goal for the rest of the section is to prove Theorem \ref{TeoF}; we start with two preliminary results.

\begin{lemma}\label{propernessvirt}
    Let $G:=G_1\amalg_H G_2$ be a free profinite amalgamated product. If the natural map from $H$ to $G$ is injective, then $G$ is proper.
\end{lemma}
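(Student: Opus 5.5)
Throughout I assume, as in the setting of Theorem \ref{TeoF}, that $\iota_1(H)$ and $\iota_2(H)$ are open in $G_1$ and $G_2$. Write $\varphi_i:G_i\to G$ for the natural maps and $K_i=\ker\varphi_i$. The plan is to verify the criterion \cite[9.2.4]{RZ} used in the proof of Lemma \ref{PropernessLem}: $G$ is proper as soon as each $G_i$ has a family of open normal subgroups $U_{i,\lambda}$ with $\bigcap_\lambda U_{i,\lambda}=1$ and $U_{1,\lambda}\cap H=U_{2,\lambda}\cap H$ for every $\lambda$. It is enough to produce such families cofinally, i.e.\ below any prescribed pair $(N_1,N_2)$ of open normal subgroups $N_i\trianglelefteq_o G_i$.

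\emph{First step: reduce to $K_i$ finite.} Since $\varphi_1|_H$ is the natural map $H\to G$, which is injective by hypothesis, $K_1\cap H=1$. As $H$ is open in $G_1$, this forces $K_1$ to be finite, and likewise $K_2$.

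\emph{Second step: produce a common normal subgroup inside a prescribed neighbourhood.} Fix $N_i\trianglelefteq_o G_i$, and shrink them so that $N_i\le H$ and $N_1\cap K_1=1=N_2\cap K_2$; this is possible because the $K_i$ are finite. Put $V=N_1\cap N_2\cap H$, which is open in $H$. Because $\varphi|_H$ is a topological embedding of a compact group, there is an open normal subgroup $W$ of $G$ with $W\cap\varphi(H)\le\varphi(V)$. Set $W_i=\varphi_i^{-1}(W)$; these are open normal in $G_i$ and satisfy $W_1\cap H=W_2\cap H\le V$. The remaining task is to replace $W_1,W_2$ by subgroups lying inside $H$, so that the two traces on $H$ become a single subgroup $D$ that is open and normal in both $G_1$ and $G_2$.

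I would do this by taking $D$ to be the intersection of the $G_1$-core and the $G_2$-core of $W_1\cap H$. Each core is open because $H$ has finite index, and each is normal in its own $G_i$. The difficulty is that the intersection need not be normal in both groups at once. To handle this I would iterate the core operation, alternating between $G_1$ and $G_2$, and argue that the process stabilizes at an open subgroup. The intended mechanism is that every stage contains $W'=W\cap\varphi(N)$ for a suitable open normal $N$ of $G$, which gives a uniform lower bound and hence only finitely many possible stages. This stabilization is the step I expect to be the main obstacle, and I have not checked that the uniform lower bound holds.

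\emph{Third step: conclude.} Once $D$ is an open subgroup of $H$ that is normal in both $G_1$ and $G_2$, the groups $G_1/D$, $G_2/D$ and $H/D$ are finite. A free profinite amalgamated product of finite groups is proper, because abstract amalgams of finite groups are residually finite. Taking $U_{i}=D\le N_i$ therefore gives the required cofinal families, and \cite[9.2.4]{RZ} yields that $G$ is proper. As a consistency check, this argument also shows directly that $K_1\le D\le N_1$, so $K_1=K_1\cap N_1=1$, and similarly $K_2=1$.
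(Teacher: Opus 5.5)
There is a genuine gap, and it is exactly the step you flagged. Producing an open subgroup $D\le H$ that is normal in both $G_1$ and $G_2$ is the real content of the lemma. In Example \ref{Ex1}, where $H\to G$ is not injective, no such $D$ exists: $G_1/D\amalg_{H/D}G_2/D$ would be a quotient of the trivial group $G$ containing $G_1/D\neq 1$. Your alternating-core iteration does not deliver such a $D$. A decreasing chain of open subgroups of $H$ need not stabilize. Moreover, the uniform lower bound you want would itself have to be an open subgroup of $H$ normal in both $G_1$ and $G_2$, which is the object you are trying to construct. The candidate you suggest, a trace on $H$ of an open normal subgroup of $G$, is in general normalized only by $H$, not by the $G_i$. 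So, as written, this step is circular.

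The missing observation is one line away from your second step. Since $N_i\le H$ and $W_i\cap H\le V\le N_i$, you have $W_i\cap H=W_i\cap N_i$, which is an intersection of two normal subgroups of $G_i$. Combined with $W_1\cap H=W_2\cap H$, this shows that $D:=W_1\cap H$ is already open in $H$ (hence in $G_1$ and $G_2$), normal in both $G_1$ and $G_2$, and contained in $N_1\cap N_2$. So the iteration is vacuous, and your third step then finishes the proof.

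With this fix your argument is essentially the paper's. The paper takes $W_{i,\lambda}=\varphi_i^{-1}(U_\lambda)\cap N_{\lambda_i}$, where $U_\lambda\trianglelefteq_o G$ satisfies $U_\lambda\cap H\le N_{\lambda_1}\cap N_{\lambda_2}\cap H$, and notes that both traces $W_{i,\lambda}\cap H$ equal $U_\lambda\cap H$. Since \cite[9.2.4]{RZ} only requires the traces on $H$ to agree, the paper never needs $W_{1,\lambda}=W_{2,\lambda}$. It also never needs $H$ to be open in $G_i$, nor your first step on the finiteness of $\ker\varphi_i$. Its argument therefore proves the lemma for an arbitrary common closed subgroup $H$.
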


\begin{proof}
    We shall prove the existence of a set $\Lambda$ and, for each $i=1,2$, a collection $\mathcal{W}_i=\{W_{i,\lambda},\lambda\in\Lambda\}$ of open normal subgroups of $G_i$ such that
    \[
        \bigcap_{\lambda\in\Lambda} W_{i,\lambda}=1, \text{ for } i=1,2 \text{ and } W_{1,\lambda}\cap H=W_{2,\lambda}\cap H, \text{ for all } \lambda\in\Lambda,
    \]
    which suffices by \cite[9.2.4]{RZ}. Let $\mathcal{U}_{i}=\{N_\lambda, \lambda\in\Lambda_i\}$ be the set of all open normal subgroups of $G_i$ and let
    \[
        \Lambda:=\Lambda_1\times\Lambda_2.
    \]
    For each $\lambda=(\lambda_1,\lambda_2)\in\Lambda$, let $H_\lambda=N_{\lambda_1}\cap N_{\lambda_2}\cap H\trianglelefteq_o H$. Then, since $H$ is a closed subgroup of $G$, there exists some $U_\lambda\trianglelefteq_o G$ such that
    \[
        U_\lambda\cap H\leq H_\lambda\leq N_{\lambda_{i}}, \text{ for } i=1,2.
    \]
    Let $U_\lambda^i$ be the preimage of $U_\lambda$ in $G_i$, and set
    \[
        W_{i,\lambda}:= U_\lambda^i\cap N_{\lambda_i}.
    \]
    Since $H$ intersects the kernel of the natural maps $\varphi:G_i\to G$ trivially, we obtain that $W_{i,\lambda}\cap H= U_\lambda^i\cap N_{\lambda_i}\cap H=U_\lambda\cap H\cap N_{\lambda_i}=U_\lambda\cap H$. In particular,
    \[
        W_{1,\lambda}\cap H=W_{2,\lambda}\cap H.
    \]
    It is clear that $\bigcap_{\lambda\in\Lambda} W_{i,\lambda}=1$ for $i=1,2$.
\end{proof}

We can now prove Theorem \ref{TeoF}.

\begin{proof}[Proof of Theorem \ref{TeoF}]
    If $H$ is a virtual retract of $G$, then $G$ is proper by Lemma \ref{propernessvirt} and $G$ has an open subgroup of the form $M\rtimes H$. Then $M\times H_G=M\rtimes H_G$ is open in $G$, so $H_G$ is a virtual direct factor of $G$. Then the commensuration $(\iota_1,\iota_2)$ may be completed by Corollary \ref{commDIR}. Conversely, if $(\iota_1,\iota_2)$ admits a completion, then $G$ is virtually a direct product $H_G\times K_\star$ with $K_\star\cap H=1$ by Corollary \ref{precisodisso}. Then $K_\star H=K_\star\rtimes H$ is open in $G$, and $H$ is therefore a virtual retract of $G$.
\end{proof}

We also obtain the following pro-$\C$ version of Theorem \ref{TeoF}:

\begin{theorem}
    Let $(\iota_1,\iota_2)$ be a commensuration between pro-$\C$ groups $G_1$ and $G_2$ over a pro-$\C$ group $H$. Then $(\iota_1,\iota_2)$ admits a completion if and only if
    \begin{itemize}
        \item[(i)] the free pro-$\C$ amalgamated product $G:=G_1\amalg_H G_2$ is proper;
        \item[(ii)] $H$ is a virtual retract of $G$.
    \end{itemize}
\end{theorem}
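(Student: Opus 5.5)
The plan is to follow the proof of Theorem \ref{TeoF}, replacing its one profinite-specific ingredient, Lemma \ref{propernessvirt}, by the hypothesis (i). The pro-$\C$ tools already available are Corollaries \ref{precisodisso} and \ref{commDIR}, both stated for arbitrary $\C$.

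For the forward direction, assume $(\iota_1,\iota_2)$ admits a completion. Then $G$ is proper by Lemma \ref{PropernessofEmbed}, which gives (i). Corollary \ref{precisodisso} then provides an open subgroup of $G$ of the form $H_G\times K_\star$ with $K_\star\cap H=1$; since $K_\star\trianglelefteq G$ by construction in the proof of Theorem \ref{TeoA}, the product $K_\star H$ is a closed subgroup. It contains the open subgroup $H_G K_\star$, so it is open, and $K_\star H=K_\star\rtimes H$. The projection $K_\star H\to H$ with kernel $K_\star$ is a continuous retraction, since it is the quotient map $K_\star H\to K_\star H/K_\star\cong H$. Hence $H$ is a virtual retract of $G$, which gives (ii).

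For the converse, assume (i) and (ii). By (i) we may regard $G_1,G_2,H$ as closed subgroups of $G$, and Lemma \ref{PropernessLem} shows that $H_G$ is open in $H$. By (ii) there are an open $L\leq G$ containing $H$ and a retraction $\rho:L\to H$; set $M=\ker\rho$, so that $L=M\rtimes H$. Then $MH_G$ is a subgroup, since $H_G$ normalizes $M$, and $[MH_G:\,]$ is open in $L$ because $H_G$ has finite index in $H$. Both $M$ and $H_G$ are normal in $MH_G$: $M$ is normal in $L$, and $H_G$ is normal in $G$. Also $M\cap H_G\leq M\cap H=1$. Hence $MH_G=M\times H_G$ is open in $G$, so $H_G$ is a virtual direct factor of $G$, and Corollary \ref{commDIR}, which requires exactly the properness in (i), yields a completion.

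The only point needing care is the claim that $MH_G$ is open. I will argue it via $\rho^{-1}(H_G)=MH_G$: this set is closed and of finite index in the open subgroup $L$, since it is the preimage of an open subgroup of $H$ under a continuous map. Everything else is bookkeeping already done in Theorems \ref{TeoA} and \ref{TeoF}.

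Hypothesis (i) cannot be dropped, unlike in the profinite case. Lemma \ref{propernessvirt} relied on \cite[9.2.4]{RZ} applied to all open normal subgroups of $G_i$; its pro-$\C$ analogue fails because the quotients $G_i/W_{i,\lambda}$ need not be compatible in $\C$ with the amalgam. So (i) must be imposed separately here.
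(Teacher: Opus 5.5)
Your proposal is correct and is the paper's argument. The paper proves this theorem by rerunning the proof of Theorem \ref{TeoF}: Lemma \ref{PropernessofEmbed} and Corollary \ref{precisodisso} with $K_\star H=K_\star\rtimes H$ give one direction, and the open subgroup $M\times H_G$ with Corollary \ref{commDIR} gives the other, with hypothesis (i) replacing Lemma \ref{propernessvirt}; your explicit checks that $K_\star\trianglelefteq G$ and that $MH_G=\rho^{-1}(H_G)$ is open fill in details the paper leaves implicit. Your closing claim that (i) genuinely cannot be dropped goes further than the paper's remark that properness is not automatic, and would need an actual pro-$\C$ example, but it plays no role in the proof.
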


\begin{proof}
    We can use the same argument as in the proof of Theorem \ref{TeoF}. Properness is not automatic, though.
\end{proof}

\section{Some profinite groups with small outer automorphism group}\label{sec:examples}

As established in Theorem \ref{TeoA}, a commensuration $(\iota_1,\iota_2)$ of pro-$\C$ groups $G_1$ and $G_2$ over a pro-$\C$ group $H$ admits a completion if and only if $(i)$ $G:=G_1\amalg_H G_2$ is proper and $(ii)$ $G$ has finite image in $\operatorname{Out}H_G$. The first condition can be difficult to verify, especially in the pro-$\C$ situation. Condition $(ii)$ holds automatically if $\operatorname{Out}H_G$ is a finite group. In the abstract case, many classes of infinite groups with finite outer automorphism group are known. We mention the fundamental groups of hyperbolic $n$-manifolds of finite volume for $n>2$ (this is a consequence of the remarkable Mostow rigidity \cite{mostow} \cite{prasad}), word-hyperbolic groups with connected and cutpair-free Gromov boundaries (see \cite[1.4]{levitt} and \cite{bowbow}), many mapping class groups of surfaces \cite{teich} and symmetric multi-GGS groups in the sense of \cite{ggs}. We also mention that the Grigorchuk 2-group \cite{grig} and the Gupta-Sidki $p$-group for $p>2$ \cite{said} \cite{ggs} have locally finite outer automorphism group, so that condition $(ii)$ of Theorem \ref{TeoA} is automatic. However, in the profinite setting, this condition appears to hold less frequently. For this reason we collect in this section some examples of profinite groups with finite outer automorphism group. A commensuration over any of the groups mentioned in this section admits a completion if and only if the associated free pro-$\C$ amalgamated product is proper.

We recall that a group is called \textit{complete} if its center is trivial and it has no outer automorphisms. Although this terminology is unfortunate in the context of profinite groups, we will adopt it in this section with the natural adaptation: a profinite group is complete if its center is trivial and it has no continuous outer automorphisms.

\subsection{Groups of automorphisms of spherically homogeneous trees.}\label{subsec:spherically_homogeneous_trees}

Let $\ell=(n_1,n_2,\dots)$ be a sequence of positive integers greater than or equal to 2. We shall construct a rooted tree $T_\ell$ associated with $\ell$. For each $k$, let $X_k$ be a set of cardinality $n_k$ and $L_k=\prod_{j\leq k} X_j$ (we take $L_0$ to be an one-element set). The vertex set of $T_\ell$ is the disjoint union of the sets $L_k$ and two vertices $v$ and $w$ of $T_\ell$ are connected (by a unique edge) if and only if either
\begin{itemize}
    \item[(i)] $v=(x_1,\dots,x_k)\in L_k$ and $w=(x_1,\dots,x_k,x_{k+1})\in L_{k+1}$ for some $k\geq1$;
    \item[(ii)] $v=(x_1,\dots,x_{k-1})\in L_{k-1}$ and $w=(x_1,\dots,x_{k})\in L_k$ for some $k\geq 2$; 
    \item[(iii)] $v\in L_0$ and $w\in L_1$; or
    \item[(iv)]  $v\in L_1$ and $w\in L_0$.
\end{itemize}

Let $\A_\ell$ be the group of all graph automorphisms of $T_\ell$. For each positive integer $n$, we denote by $\operatorname{Stab}(n)$ the intersection of $\operatorname{Stab}_{\A_\ell}(v)$ for all $v\in L_n$. We define a topology on $\A_\ell$ by taking $\operatorname{Stab}(n)$ as a fundamental system of neighborhoods of $1$. With this topology, $\A_\ell$ is a profinite group which may be expressed as an inverse limit
\[
    \varprojlim_k \A_\ell/\operatorname{Stab}(k)=\varprojlim_k S_{n_1}\wr S_{n_2}\wr\dots\wr S_{n_k}
\]

\noindent of iterated permutational wreath products of symmetric groups.

We define a natural partial order $\precsim$ on the set of vertices of $T_\ell$ by saying $v\precsim w$ if $v=(x_1,\dots,x_n)$ and $w=(x_1,\dots,x_n,x_{n+1},\dots,x_m)$ for some $m\geq n$. We define $T_\ell^v$ to be the unique subtree of $T_v$ containing all vertices $w$ such that $v\precsim w$.

Let $G$ be an abstract subgroup of $\A_\ell$. For each vertex $v\in T_\ell$, define the \textit{rigid stabilizer of} $v$ to be the subgroup

\[
    \operatorname{Rist}_G(v):=\bigcap_{w\notin T_\ell^v} \operatorname{Stab}_G(w).
\]

\noindent For each $n$, define the \textit{rigid stabilizer of level} $n$ to be the subgroup of $G$ generated by the subgroups $\operatorname{Rist}_G(v)$ for $v\in L_n$.

The group $G$ is said to be \textit{weakly branch} if for all $n$, $G$ acts transitively on $L_n$ and $\operatorname{Rist}_G(n)\neq 1$. The (topological) closure of $G$ in $\A_\ell$ is a weakly branch profinite group.

A subgroup $H$ of $\A_\ell$ is \textit{saturated} (see \cite{branch}) if for every $n$ there is a characteristic subgroup $H_n$ of $H$ contained in $\operatorname{Stab}_H(n)=\bigcap_{v\in L_n} \operatorname{Stab}_H(v)$ such that $H_n$ acts transitively on every subtree of the level $n$. The motivation for this definition is the following:

\begin{theorem}[\cite{branch}, 7.5]
    If $G\leq\A_\ell$ is a saturated weakly branch group, then $\operatorname{Aut}(G)$ coincides with the normalizer of $G$ in $\A_\ell$.
\end{theorem}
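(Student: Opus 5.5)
Every automorphism of $G$ will be realized by conjugation by an element of $N:=N_{\A_\ell}(G)$; conversely, every such conjugation is an automorphism. Both directions give the map $\Phi\colon N\to\operatorname{Aut}(G)$, $a\mapsto(g\mapsto g^a)$, which is a homomorphism. It is injective because weakly branch groups have trivial centralizer in $\A_\ell$: if $a$ centralizes $G$ and moves some vertex $v\in L_n$ to $w\neq v$, then $\operatorname{Rist}_G(v)^a=\operatorname{Rist}_G(w)$. Since $a$ centralizes $G$, this says $\operatorname{Rist}_G(v)=\operatorname{Rist}_G(w)$, which is impossible because these groups are nontrivial and have disjoint supports. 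The whole content is therefore surjectivity: given $\varphi\in\operatorname{Aut}(G)$, build $a\in\A_\ell$ with $\varphi(g)=g^a$ for all $g\in G$.

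First I would construct a level-preserving bijection $a_n\colon L_n\to L_n$ for each $n$, compatible with the tree structure, and then set $a=\varprojlim a_n$.

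The key point is that $\varphi$ permutes the vertex stabilizers. By saturation, $H_n:=G_n$ is characteristic, so $\varphi(G_n)=G_n$. For $v\in L_n$, consider $P_v:=\operatorname{Stab}_G(v)$. I want to show that $\varphi(P_v)=P_{w}$ for a unique $w\in L_n$. The plan is to characterize the family $\{P_v: v\in L_n\}$ purely algebraically, in terms of $G_n$ and rigid stabilizers. The natural route is to first show that $\varphi$ permutes the subgroups $\operatorname{Rist}_G(v)$, $v\in L_n$, or rather their commensurability classes.

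For that, I would use the standard branch-type argument. Subgroups with disjoint supports commute. A normal subgroup of a weakly branch group contains the derived subgroup of some rigid stabilizer, by a Grigorchuk-type lemma. Combining these, rigid vertex stabilizers should be characterized, up to the relevant equivalence, as maximal subgroups whose centralizer-type complements behave in a specified way; a characterization in the spirit of double centralizers, $C_G(C_G(\cdot))$, is what I would aim for.

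Transitivity of $G_n$ on each subtree then pins down $\operatorname{Stab}_G(v)$ as the normalizer of the corresponding rigid stabilizer structure, so $\varphi$ induces a permutation $a_n$ of $L_n$. Compatibility between levels comes from containment: $P_u\supseteq P_v$ when $u\precsim v$. Hence $a=\varprojlim a_n\in\A_\ell$. The identity $\varphi(g)=g^a$ then follows because $\varphi(g)$ and $g^a$ act identically on every $L_n$: both send $a(v)$ to $a(gv)$, since $\varphi(gP_vg^{-1})=\varphi(g)P_{a(v)}\varphi(g)^{-1}$. By the definition of $a$, we get $a\in N$.

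The main obstacle is the algebraic identification of vertex stabilizers, namely showing that $\varphi(\operatorname{Stab}_G(v))$ is again a stabilizer of a level-$n$ vertex. This is exactly where saturation is needed, because without it automorphisms may fail to preserve levels. I would attack it by showing that $G_n$-orbits together with commutation relations among rigid stabilizers determine the partition of $L_n$, following the approach in \cite{branch}.
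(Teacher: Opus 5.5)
The paper gives no proof of this statement; it is quoted from \cite{branch}. Judged on its own, your proposal has a genuine gap. The step you call the main obstacle is that $\varphi$ carries level-$n$ vertex data to level-$n$ vertex data. That step is the entire content of the theorem, and you only describe an intention for it (double centralizers, commutation of disjointly supported subgroups, the normal-subgroup lemma). Your injectivity argument is correct, and those tools are genuine. But they are invariant under every automorphism. At most they recover the Boolean algebra of rigid stabilizers $\operatorname{Rist}_G(U)$ of regular open sets $U$ in the boundary $\partial T_\ell$, i.e.\ the boundary as a topological $G$-space. Nothing there distinguishes a cylinder $\partial T_\ell^v$, $v\in L_n$, from an arbitrary clopen set. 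So it cannot show that $\varphi(\operatorname{Rist}_G(v))=\operatorname{Rist}_G(w)$ with $w\in L_n$.

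The argument behind the cited result accordingly has two steps.
\begin{enumerate}
\item A Rubin-type reconstruction theorem shows that every automorphism of a weakly branch group is induced by a homeomorphism $F$ of $\partial T_\ell$, with $\varphi(g)=FgF^{-1}$.
\item Saturation then enters. Since $\varphi(H_n)=H_n$, $F$ maps $H_n$-orbits in $\partial T_\ell$ to $H_n$-orbits, so it permutes their closures. These closures are exactly the cylinders $\partial T_\ell^v$, $v\in L_n$, because $H_n\le\operatorname{Stab}(n)$ acts level-transitively on each $T_\ell^v$. Hence $F$ permutes level-$n$ cylinders for every $n$, so $F\in\A_\ell$ and $F$ normalizes $G$.
\end{enumerate}

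Your sentence about saturation misplaces it. The equality $N_G(\operatorname{Rist}_G(v))=\operatorname{Stab}_G(v)$ holds in every weakly branch group, by the same disjoint-support argument you used for injectivity. Saturation is needed precisely for level preservation, which your proposal never establishes.

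There is also a concrete error in the scaffolding: vertex stabilizers do not determine vertices. Suppose $n_k=2$ and $x,y\in L_k$ are siblings. Then $\operatorname{Stab}_G(x)=\operatorname{Stab}_G(y)$ for every $G\le\A_\ell$, because fixing $x$ fixes its parent and hence also $y$. This breaks three steps.
\begin{itemize}
\item ``$\varphi(P_v)=P_w$ for a unique $w$'' fails, so $a_n$ is not well defined.
\item You cannot deduce $a(gv)=\varphi(g)a(v)$ from $P_{a(gv)}=P_{\varphi(g)a(v)}$.
\item Containment of stabilizers does not give the parent relation: a child $w'$ of $x$ has $P_{w'}\subseteq P_x=P_y$, although $y$ is not its parent.
\end{itemize}
These steps should be run with rigid stabilizers instead, which do determine the vertex since they are nontrivial and supported in $T_\ell^v$. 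Equivariance then follows from $\varphi(\operatorname{Rist}_G(gv))=\varphi(g)\varphi(\operatorname{Rist}_G(v))\varphi(g)^{-1}$. Compatibility follows because a nontrivial subgroup supported in $T_\ell^{w'}$ can lie in $\operatorname{Rist}_G(w)$ only if $w\precsim w'$. But this repair still presupposes the missing fact that $\varphi$ permutes $\{\operatorname{Rist}_G(v):v\in L_n\}$.
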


We obtain the following corollary:

\begin{corollary}
    If $G$ is a saturated weakly branch group that is closed and self-normalizing in $\A_\ell$, then $G$ is a complete profinite group. In particular, the group $\A_\ell$ is a profinite group with $\operatorname{Out}\A_\ell=1$.
\end{corollary}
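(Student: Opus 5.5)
The plan is to derive both assertions directly from the cited theorem \cite[7.5]{branch}, using that $G$ is closed so that automorphisms of $G$ are controlled by the profinite topology.

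\textbf{Step 1: outer automorphisms.} Let $\alpha$ be a continuous automorphism of $G$. Since $G$ is saturated and weakly branch, \cite[7.5]{branch} gives $\operatorname{Aut}(G)=N_{\A_\ell}(G)$. Thus $\alpha$ is conjugation by some $a\in N_{\A_\ell}(G)$. Because $G$ is self-normalizing, $a\in G$, so $\alpha$ is inner and $\operatorname{Out}G=1$.

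There is a subtlety in how \cite[7.5]{branch} is stated. It is usually formulated for abstract automorphisms, and continuous automorphisms are among these, so it applies directly. If instead it is stated only for abstract subgroups, we still apply it to $G$ regarded as an abstract subgroup of $\A_\ell$. The hypotheses hold because saturation and weak branchness are intrinsic conditions on the subgroup $G\leq\A_\ell$.

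\textbf{Step 2: trivial center.} Let $z\in Z(G)$. For each $n$, $G$ acts transitively on $L_n$, and $\operatorname{Rist}_G(v)\neq1$ for every vertex $v$; the latter follows from $\operatorname{Rist}_G(n)\neq1$ together with transitivity, since rigid stabilizers at a given level are conjugate. Suppose $z$ moves some vertex $v$ to $w\neq v$ on the same level. Then $z\operatorname{Rist}_G(v)z^{-1}=\operatorname{Rist}_G(w)$. Because $z$ is central, this gives $\operatorname{Rist}_G(v)=\operatorname{Rist}_G(w)$. But $\operatorname{Rist}_G(v)$ and $\operatorname{Rist}_G(w)$ act on the disjoint subtrees $T_\ell^v$ and $T_\ell^w$, so this common group acts trivially on both. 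It also fixes everything outside $T_\ell^v$, so it is trivial, a contradiction. Hence $z$ fixes every vertex, so $z=1$. Alternatively, the trivial center follows from \cite[7.5]{branch} itself: the conjugation map $G\to\operatorname{Aut}G$ is injective because $G$ embeds in $\A_\ell$ faithfully as the normalizer acting by conjugation. Either way, $G$ is complete.

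\textbf{Step 3: the case $G=\A_\ell$.} $\A_\ell$ is closed and trivially self-normalizing in itself. It is weakly branch, since it is level-transitive and its rigid stabilizers contain copies of $\A_{\ell'}$ for shifted sequences $\ell'$. It is saturated: take $H_n=\operatorname{Stab}(n)$, which acts transitively on each subtree at level $n$. Here I expect the main obstacle: $H_n$ must be characteristic in $\A_\ell$. I would try to show this by characterizing $\operatorname{Stab}(n)$ intrinsically, for instance as a term of a canonical series, or via rigid stabilizers, which are permuted by automorphisms. If that proves delicate, I would instead cite \cite{branch}, where $\A_\ell$ is given as a standard example of a saturated group. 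The corollary then yields $\operatorname{Out}\A_\ell=1$.
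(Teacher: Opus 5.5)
Your Steps 1 and 2 are correct. They are exactly how the paper intends the corollary to be read off \cite[7.5]{branch}; the paper itself writes no proof. Surjectivity of $N_{\A_\ell}(G)\to\operatorname{Aut}G$ together with self-normalization kills outer automorphisms, and your rigid-stabilizer argument for $Z(G)=1$ is fine. In your alternative argument for the center, what matters is injectivity of $N_{\A_\ell}(G)\to\operatorname{Aut}G$, i.e.\ $C_{\A_\ell}(G)=1$, not faithfulness of $G\le\A_\ell$.

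The gap is in Step 3, which the paper also passes over in silence: saturation of $\A_\ell$ is never verified, and your plan for verifying it does not work as stated.
\begin{itemize}
\item Deducing that $H_n=\operatorname{Stab}(n)$ is characteristic from the claim that automorphisms permute the rigid stabilizers $\operatorname{Rist}(v)$ is circular. That an abstract automorphism carries vertex rigid stabilizers to vertex rigid stabilizers, rather than to rigid stabilizers of arbitrary clopen subsets of the boundary, is essentially the content of \cite[7.5]{branch}. Saturation is precisely the hypothesis that forces it, so it cannot be used to establish saturation.
\item Nor is $\operatorname{Stab}(n)$ visibly canonical. For $\ell=(2,2,\dots)$, $\operatorname{Stab}(1)$ is the kernel of the first of infinitely many independent level-sign homomorphisms $\A_\ell\to C_2$. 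So it is one of infinitely many open subgroups of index $2$, and $\operatorname{Stab}(n)$ is characteristic only a posteriori.
\end{itemize}

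The definition of saturation only asks for \emph{some} characteristic $H_n\le\operatorname{Stab}(n)$, and a verbal subgroup does the job.
\begin{itemize}
\item Let $e$ be the exponent of $\A_\ell/\operatorname{Stab}(n)$ and put $H_n=\langle g^e : g\in\A_\ell\rangle$. This subgroup is characteristic and contained in $\operatorname{Stab}(n)$.
\item Fix $v\in L_n$ and a prime $p\mid e$, and let $p^k$ be the exact power of $p$ dividing $e$. Consider the complement to $\operatorname{Stab}(n)$ consisting of automorphisms that change only the first $n$ coordinates. In it, choose $x$ of order $p^k$ having $v$ in a $p^k$-cycle on $L_n$. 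This is possible: an element of order $p^k$ acting faithfully on $L_n$ has a $p^k$-cycle, and conjugating within the complement, which is transitive on $L_n$, moves $v$ into it.
\item For $c\in\operatorname{Rist}(v)$ and $g=cx$ we have $g^{p^k}=\prod_{i=0}^{p^k-1}x^icx^{-i}$. The factors with $i\neq 0$ lie in $\operatorname{Rist}(x^iv)$ with $x^iv\neq v$, so they act trivially on $T_\ell^v$.
\item Hence $g^{p^k}$ fixes $v$ and restricts to $c$ on $T_\ell^v$, and therefore $g^e=(g^{p^k})^{e/p^k}$ restricts to $c^{e/p^k}$.
\item The integers $e/p^k$, taken over the primes $p\mid e$, have gcd $1$. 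So $H_n$ restricts onto the full automorphism group of $T_\ell^v$, and in particular acts level-transitively on it.
\end{itemize}
With this, or with a precise reference for the saturation of $\A_\ell$, your Step 3 and hence the corollary go through.
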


\subsection{Infinite products of finite groups.}\label{subsec:infinite_products} A group $G$ is indecomposable if $G$ is not an internal direct product of proper subgroups. Given a family $\mathcal{G}$ of pairwise non-isomorphic finite, complete, indecomposable groups, the profinite group
\[
    \operatorname{Prod}(\mathcal{G}) = \prod_{G\in\mathcal{G}} G
\]
\noindent is complete and infinite if $\mathcal{G}$ is infinite.

Examples of indecomposable finite complete groups are the symmetric groups $S_n$ for $n\geq 3, n\neq 6$, the holomorph of $C_p$ for $p>2$, and infinitely many non-abelian finite simple groups. More generally, $\operatorname{Aut}(S)$ is complete for any non-abelian finite simple group $S$.

We recall Wielandt's theorem on the automorphism tower (see \cite{wielandt}), which is useful for constructing finite complete groups. Given a finite group $G$, we may construct the following sequence of finite groups
\[
    A_1:= G \text{ and } A_{n+1}=\operatorname{Aut}(A_n), \text{ for } n\geq1.
\]

\noindent Wielandt's theorem states that, provided $G$ is centerless, there exists $N\geq 1$ such that $A_N$ is complete.

Some products of finite groups are examples of profinite groups whose outer automorphism group are infinite, compact and torsion.

\begin{example}
    Let $\mathcal{S}$ be an infinite family of pairwise non-isomorphic non-abelian simple groups with outer automorphism groups of bounded order. The unrestricted product
    \[
        G=\prod_{S\in\mathcal{S}} S
    \]
    \noindent is a profinite group whose outer automorphism group is a compact and torsion group. If $\operatorname{Out}S\neq 1$ for infinitely many $S\in\mathcal{S}$, then $\operatorname{Out}G$ is infinite.
\end{example}

\begin{example}
    Let $A_n$ denote the alternating group on $n$ symbols. The group
    \[
        G = \prod_{n\geq5} A_n
    \]
    is a $2$-generated profinite group and
    \[
    \operatorname{Out}G \cong \prod_{\mathbb{N}} C_2
    \]
    \noindent is an infinite elementary abelian $2$-group of countable rank.
\end{example}

\subsection{Compact $p$-adic analytic groups.}\label{subsec:p-adic_analytic} Commensurability plays an important role in the study of compact $p$-adic analytic groups, which are profinite and virtually pro-$p$. A standard fact is that if $G_1$ and $G_2$ are compact $p$-adic analytic, then
\[
    G_1 \text{ and } G_2 \text{ are commensurable } \iff \mathcal{L}(G_1)\cong\mathcal{L}(G_2),
\]

\noindent where $\mathcal{L}(G)$ is the $\Q_p$-Lie algebra of a $p$-adic analytic group $G$.

A $p$-adic analytic group which is pro-$p$ is usually called an \textit{analytic pro-$p$} group. A finitely generated, torsion-free, and powerful pro-$p$ group is said to be \textit{uniform} (or \textit{uniformly powerful}). Uniform groups are analytic pro-$p$ groups. A stronger fact is that a topological group is $p$-adic analytic if and only if it contains an open uniform group.

For a compact $p$-adic analytic group $G$, $\operatorname{Aut}(G)$ is also compact and $p$-adic analytic. However, it can be much larger than $G$, as in the standard example
\[
    \operatorname{Aut}(\Z_p^n)=\operatorname{GL}_n(\Z_p),
\]
\noindent in which $\Z_p^n$ has dimension $n$ and its automorphism group has dimension $n^2$. However, for semisimple $p$-adic analytic groups, there are few automorphisms. All mentioned results are standard, and the proofs may be found in \cite{DDMS}.

A Lie algebra $\mathfrak{g}$ is \textit{complete} if all its derivations are inner and its center is zero. Semisimple Lie algebras over $\Q_p$ are complete, but there are complete Lie algebras that are not semisimple, such as the unique non-abelian Lie algebra of dimension 2.

\begin{proposition}
    If $G$ is a uniform pro-$p$ group with a complete Lie algebra, then $\operatorname{Out}(G)$ is finite.
\end{proposition}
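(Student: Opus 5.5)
The plan is to pass from the group $G$ to its $\Q_p$-Lie algebra $\mathfrak{g}=\Lie(G)$ and use the completeness of $\mathfrak{g}$. For a uniform pro-$p$ group, the $\Z_p$-Lie algebra $L_G$ (the group $G$ with the operations obtained by the limit formulas of \cite{DDMS}) is a $\Z_p$-lattice in $\mathfrak{g}=L_G\otimes\Q_p$. Every continuous automorphism of $G$ is an automorphism of $L_G$; conversely, $G$ can be recovered from $L_G$. So
\[
\operatorname{Aut}(G)\cong\operatorname{Aut}(L_G)=\{\alpha\in\operatorname{Aut}(\mathfrak{g}) : \alpha(L_G)=L_G\},
\]
which is an open compact subgroup of the $p$-adic Lie group $\operatorname{Aut}(\mathfrak{g})$.

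\emph{Step 1: the dimension count.} The Lie algebra of $\operatorname{Aut}(\mathfrak{g})$ is $\operatorname{Der}(\mathfrak{g})$. Since $\mathfrak{g}$ is complete, $\operatorname{ad}:\mathfrak{g}\to\operatorname{Der}(\mathfrak{g})$ is an isomorphism: it is injective because $Z(\mathfrak{g})=0$, and surjective because all derivations are inner. Hence $\dim\operatorname{Aut}(G)=\dim\operatorname{Aut}(\mathfrak{g})=\dim\mathfrak{g}=\dim G$.

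\emph{Step 2: $\operatorname{Inn}(G)$ is open in $\operatorname{Aut}(G)$.} The map $G\to\operatorname{Inn}(G)$ has kernel $Z(G)$. This is a closed subgroup whose Lie algebra is contained in $Z(\mathfrak{g})=0$, so $Z(G)$ is finite. Since $G$ is torsion-free, $Z(G)=1$. Thus $\operatorname{Inn}(G)\cong G$ is a compact $p$-adic analytic subgroup of $\operatorname{Aut}(G)$ of the same dimension, and its Lie algebra is $\operatorname{ad}(\mathfrak{g})=\operatorname{Der}(\mathfrak{g})$. A closed subgroup of a $p$-adic analytic group with the same Lie algebra is open, so $\operatorname{Inn}(G)$ is open in $\operatorname{Aut}(G)$.

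\emph{Step 3: conclusion.} $\operatorname{Aut}(G)$ is compact because $G$ is finitely generated, so $\operatorname{Aut}(G)$ is profinite (\cite[4.4]{RZ}). An open subgroup of a compact group has finite index, so $\operatorname{Out}(G)=\operatorname{Aut}(G)/\operatorname{Inn}(G)$ is finite.

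The main obstacle is justifying that the conjugation action $G\to\operatorname{Aut}(G)\subseteq\operatorname{Aut}(\mathfrak{g})$ differentiates to $\operatorname{ad}$, so that the Lie algebra of $\operatorname{Inn}(G)$ is exactly $\operatorname{ad}(\mathfrak{g})$. I would argue this through the logarithm correspondence: conjugation by $\exp(x)$ acts on $\mathfrak{g}$ as $\exp(\operatorname{ad}x)$, which is the standard identity $\operatorname{Ad}\circ\exp=\exp\circ\operatorname{ad}$ as in \cite{DDMS}. If one prefers to avoid this, a direct alternative is to show that $\operatorname{Inn}(G)$ is open by comparing the indices $[\operatorname{Aut}(G):\operatorname{Aut}(G)\cap(1+p^k\operatorname{End}L_G)]$ with the image of $G^{p^k}$. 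That route also reduces to the dimension equality of Step 1.
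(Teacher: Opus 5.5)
Your proposal is correct and follows the same route as the paper: embed $G\cong\operatorname{Inn}(G)$ in $\operatorname{Aut}(G)$, use completeness of $\Lie(G)$ to get $\dim\operatorname{Aut}(G)=\dim_{\Q_p}\operatorname{Der}(\Lie(G))=\dim G$, and conclude that $\operatorname{Inn}(G)$ is open, hence of finite index, in the compact group $\operatorname{Aut}(G)$. Your write-up also supplies details the paper leaves implicit: that $Z(G)=1$, why equal dimension gives openness, and why $\operatorname{Aut}(G)$ is compact. The ``main obstacle'' you flag is unnecessary, though. $\operatorname{Inn}(G)\cong G$ already has dimension $\dim G=\dim\operatorname{Aut}(G)$, and a closed subgroup of full dimension in a $p$-adic analytic group is automatically open, so you never need to identify its Lie algebra as $\operatorname{ad}(\mathfrak{g})$.
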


\begin{proof}
    Since $G$ is a compact $p$-adic analytic group, so is $\operatorname{Aut}(G)$. The group $G$ embeds naturally as a closed subgroup of $\operatorname{Aut}(G)$, and we may identify $\operatorname{Out}(G)$ with $\operatorname{Aut}(G)/G$. Therefore, to show $\operatorname{Out}(G)$ is finite, it is sufficient to show that the dimension of $\operatorname{Aut}(G)$ as a $p$-adic analytic group is the same as the dimension of $G$. But since $G$ is uniform, $\operatorname{dim}\operatorname{Aut}(G)=\dim_{\Q_p}\operatorname{Der}(\Lie(G))$. Since $\operatorname{Der}(\Lie(G))$ consists just of inner derivations, $\operatorname{dim}\operatorname{Aut}(G)=\operatorname{dim}G$.
\end{proof}





\subsection{Profinite completion of some mapping class groups.}\label{subsec:mapping_class} Let $\Sigma_{g,n}$ be a closed orientable surface of genus $g$ with $n$ punctures. The extended mapping class group $\operatorname{Mod}^\pm(\Sigma_{g,n})$ is the group of isotopy classes of diffeomorphisms of $\Sigma_{g,n}$. These groups are residually finite, and their profinite completions play an important role in Grothendieck-Teichmüller theory \cite{hatcher}.

The groups $\operatorname{Mod}^\pm(\Sigma_{g,n})$ are, apart from a few exceptions, complete. In \cite{Boggi}, the authors proved that the profinite completion of $\operatorname{Mod}^\pm(\Sigma_{g,n})$ is complete when $g=0$ and $n\geq 5$:

\begin{theorem}
    Let $\Gamma:=\operatorname{Mod}^\pm(\Sigma_{0,n})$ be the extended mapping class group of the sphere with $n$ punctures. If $n\geq 5$, its profinite completion $\widehat{\Gamma}$ is a complete profinite group.
\end{theorem}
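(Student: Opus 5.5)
The plan is to follow the strategy of \cite{Boggi}, which rests on the congruence subgroup property in genus $0$ and on a profinite version of Ivanov's rigidity theorem for the curve complex. Write $\Pi:=\pi_1(\Sigma_{0,n})$, a free group of rank $n-1$, and let $\mathrm{P}\Gamma$ be the pure mapping class group. The finite-index subgroup $\mathrm{P}\Gamma$ satisfies the congruence subgroup property (Asada). Hence $\widehat{\mathrm{P}\Gamma}$, and likewise $\widehat{\Gamma}$, embeds into $\operatorname{Out}\widehat{\Pi}$, respectively into the group of outer automorphisms permuting the $n$ cuspidal inertia classes. This gives a concrete faithful model of $\widehat{\Gamma}$ in which to work.

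\textbf{Step 1: trivial center.} For this I will use the procongruence curve complex $\widehat{C}(\Sigma_{0,n})$. Its simplices are $\widehat{\Gamma}$-orbits of multicurves, completed as profinite sets, and it carries a continuous action of $\widehat{\Gamma}$ extending the action of $\Gamma$ on $C(\Sigma_{0,n})$. I will show two things: $\widehat{\Gamma}$ acts faithfully on $\widehat{C}(\Sigma_{0,n})$, and the stabilizer of a profinite curve $\gamma$ is the normalizer of the procyclic group $\overline{\langle \tau_\gamma\rangle}$ generated by the profinite Dehn twist. A central element $z$ normalizes every $\overline{\langle\tau_\gamma\rangle}$, so it fixes every vertex. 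For $n\geq5$ a vertex is determined by its link, and faithfulness then forces $z=1$.

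\textbf{Step 2: group-theoretic characterization of twists (main obstacle).} I need to show that every continuous automorphism $\phi$ of $\widehat{\Gamma}$ maps the set of procyclic subgroups $\overline{\langle\tau_\gamma\rangle}$ to itself. I would first try to characterize these subgroups intrinsically. They should be exactly the maximal procyclic subgroups $Z$ such that the centralizer of some open subgroup of $Z$ contains a free abelian pro-$\widehat{\Z}$ group of the maximal rank $n-3$, with $Z$ a direct factor of it. These maximal-rank abelian subgroups are the profinite Dehn multitwists along pants decompositions. Proving this requires a profinite analogue of the Birman--Lubotzky--McCarthy classification of abelian subgroups. I would derive it from the description of centralizers of multitwists in $\widehat{\mathrm{P}\Gamma}$, for which goodness and the congruence subgroup property let one compute via the Birman exact sequences. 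This is where I expect the real difficulty to lie: one must exclude exotic abelian subgroups in the completion, for instance those coming from pseudo-Anosov-like profinite elements.

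\textbf{Step 3: rigidity and conclusion.} By Step 2, $\phi$ permutes the vertices of $\widehat{C}(\Sigma_{0,n})$, and it preserves adjacency, since adjacency is commutation of twists. So $\phi$ induces an automorphism of the procongruence curve complex. Next I invoke the profinite rigidity theorem $\operatorname{Aut}\widehat{C}(\Sigma_{0,n})=\widehat{\Gamma}$ for $n\geq5$. I would prove it by restricting to the profinite pants graph and reducing to the case $n=5$ through links of vertices, mimicking Ivanov and Korkmaz. This yields $g\in\widehat{\Gamma}$ such that $\psi:=\iota_g^{-1}\circ\phi$ acts trivially on $\widehat{C}(\Sigma_{0,n})$. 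For each curve $\gamma$ and each $x\in\widehat{\Gamma}$ we have $\psi(x)\gamma=\psi(x\cdot\gamma)$ at the level of vertices, which equals $x\gamma$. Therefore $x^{-1}\psi(x)$ acts trivially, and faithfulness from Step 1 gives $\psi=\mathrm{id}$. Hence $\operatorname{Out}\widehat{\Gamma}=1$, and together with Step 1 this shows $\widehat{\Gamma}$ is complete.
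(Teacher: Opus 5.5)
The paper gives no argument for this statement; it quotes it from \cite{Boggi}. Your outline has a plausible overall shape, but Step 3 rests on a false claim: $\operatorname{Aut}\widehat{C}(\Sigma_{0,n})$ is much larger than $\widehat{\Gamma}$.

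Note first that $\widehat{C}(\Sigma_{0,n})$ is the same complex for $\mathrm{P}\Gamma$ and for $\Gamma$. Take $\sigma\in G_{\mathbb{Q}}$, and let $\tilde\sigma\in\operatorname{Aut}\widehat{\mathrm{P}\Gamma}$ represent its outer action on $\widehat{\mathrm{P}\Gamma}\cong\pi_1^{\mathrm{et}}(\mathcal{M}_{0,n}\otimes\overline{\mathbb{Q}})$. Twist groups are inertia groups of boundary divisors, so $\tilde\sigma(\tau_\gamma)$ is conjugate to $\tau_\gamma^{\chi(\sigma)}$, with $\chi$ the cyclotomic character. Hence $\tilde\sigma$ permutes the twist subgroups, and your Step 3 construction turns it into an automorphism of $\widehat{C}(\Sigma_{0,n})$.

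Suppose this automorphism were induced by some $g\in\widehat{\Gamma}$. Your concluding argument, applied to $\iota_g^{-1}\circ\tilde\sigma$, would give $\tilde\sigma=\iota_g$ on $\widehat{\mathrm{P}\Gamma}$. That argument is correct, which is exactly why the rigidity claim breaks down. Doing this for every $\sigma$, the outer Galois action on $\widehat{\mathrm{P}\Gamma}$ would land in the image of $\widehat{\Gamma}$ in $\operatorname{Out}\widehat{\mathrm{P}\Gamma}$. That image is a quotient of the finite group $\widehat{\Gamma}/\widehat{\mathrm{P}\Gamma}$, which contradicts Belyi's theorem: $G_{\mathbb{Q}}\to\operatorname{Out}\widehat{\mathrm{P}\Gamma}$ is injective for $n\geq 4$. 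The whole Grothendieck--Teichm\"uller group $\widehat{GT}$ acts in the same twist-preserving way. So your rigidity statement already fails for $n=5$, and no Ivanov--Korkmaz style induction can prove it. Put differently, your argument would show that every twist-preserving automorphism of the pure orientation-preserving completion is conjugation by an element of $\widehat{\Gamma}$, and that is false.

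The missing idea is an essential use of orientation reversal. $\operatorname{Aut}\widehat{C}(\Sigma_{0,n})$ contains a copy of the group of twist-preserving automorphisms of $\widehat{\mathrm{P}\Gamma}$, and its outer part contains $\widehat{GT}$. Completeness of $\widehat{\Gamma}$ has to come from showing that no nontrivial outer automorphism of this kind is compatible with the outer class of an orientation-reversing involution (complex conjugation, in the arithmetic picture). That is genuine arithmetic input, and no combinatorics of $\widehat{C}$ will supply it.

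Step 2 also needs repair, because your criterion does not single out twists. In $\Sigma_{0,5}$, take a pseudo-Anosov $\phi$ supported on the four-holed sphere complementary to a curve $c$. Together with $\tau_c$ (or the half-twist on the other side of $c$) it spans a free abelian group of rank $2=n-3$, and the maximal procyclic subgroup containing $\phi$ passes your test. In particular, maximal-rank abelian subgroups are not only pants multitwist groups, already in the discrete group $\operatorname{Mod}(\Sigma_{0,5})$. You need an Ivanov-type condition, for instance that the center of the centralizer of open subgroups of $Z$ be procyclic. Even then, showing that every continuous automorphism preserves the twist subgroups is a substantial theorem in its own right.
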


To our knowledge, this is the first example of a infinite, finitely generated, residually finite, complete group with complete profinite completion.

In the same paper, the authors produce another class of examples of profinite groups with finite outer automorphism groups. Let $\mathcal{M}_{0,n}$ be the moduli space of $n$-pointed curves of genus $0$ and let $(\mathcal{M}_{0,n})_\mathbb{R}$ denote $\mathcal{M}_{0,n}\times\operatorname{Spec}(\mathbb{R})$.

\begin{theorem}
    Let $n\geq 5$. The étale fundamental group $\pi_1^{\text{et}}((\mathcal{M}_{0,n})_\mathbb{R})$ has outer automorphism group isomorphic to the symmetric group $S_n$.
\end{theorem}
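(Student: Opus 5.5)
The plan is to identify $\pi_1^{\text{et}}((\mathcal{M}_{0,n})_\mathbb{R})$ with a normal open subgroup of $\widehat{\Gamma}$, where $\Gamma=\operatorname{Mod}^\pm(\Sigma_{0,n})$ as in the previous theorem. Then we compute its automorphisms inside $\operatorname{Aut}(\widehat{\Gamma})\cong\widehat{\Gamma}$.

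\textbf{Step 1: identifying the group.} The homotopy exact sequence for $\mathcal{M}_{0,n}\to\operatorname{Spec}\mathbb{R}$ gives
\[
1\to\pi_1^{\text{et}}((\mathcal{M}_{0,n})_\mathbb{C})\to\pi_1^{\text{et}}((\mathcal{M}_{0,n})_\mathbb{R})\to\operatorname{Gal}(\mathbb{C}/\mathbb{R})\to1 .
\]
By Riemann existence, the kernel is the profinite completion of $\pi_1(\mathcal{M}_{0,n}(\mathbb{C}))=\operatorname{PMod}(\Sigma_{0,n})$. Complex conjugation acts as an orientation-reversing class, and the sequence splits because real points exist. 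I would use this to identify $\pi_1^{\text{et}}((\mathcal{M}_{0,n})_\mathbb{R})$ with $\widehat{P\Gamma}$, where $P\Gamma=\operatorname{PMod}^\pm(\Sigma_{0,n})$. Since the mapping class groups of punctured spheres are good and residually finite, $\widehat{P\Gamma}$ is the closure of $P\Gamma$ in $\widehat{\Gamma}$. It is therefore an open normal subgroup with $\widehat{\Gamma}/\widehat{P\Gamma}\cong S_n$, the permutation action on the punctures.

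\textbf{Step 2: reducing to rigidity.} Conjugation gives a homomorphism $\widehat{\Gamma}\to\operatorname{Aut}(\widehat{P\Gamma})$.
\begin{itemize}
\item[(a)] It is injective. The centralizer of $\widehat{P\Gamma}$ in $\widehat{\Gamma}$ is trivial for $n\geq5$, which follows from the trivial-center and rigidity arguments in \cite{Boggi}.
\item[(b)] It is surjective. This is the main content, and the crucial input from \cite{Boggi} is needed here.
\end{itemize}
For (b), every continuous automorphism of $\widehat{P\Gamma}$ preserves the set of procyclic subgroups generated by profinite Dehn twists, up to the orientation ambiguity. It therefore induces an automorphism of the profinite curve complex $\widehat{C}(\Sigma_{0,n})$. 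The profinite analogue of Ivanov's theorem then identifies $\operatorname{Aut}(\widehat{C}(\Sigma_{0,n}))$ with $\widehat{\Gamma}$. The automorphism is determined by this action because $\widehat{P\Gamma}$ has trivial centralizer in $\widehat{\Gamma}$. This is the same mechanism that yields completeness of $\widehat{\Gamma}$ in the previous theorem, now applied to the open normal subgroup.

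\textbf{Step 3: conclusion.} Granting (a) and (b), $\operatorname{Aut}(\widehat{P\Gamma})\cong\widehat{\Gamma}$, and under this isomorphism the inner automorphisms correspond to $\widehat{P\Gamma}$. Hence
\[
\operatorname{Out}(\widehat{P\Gamma})\cong\widehat{\Gamma}/\widehat{P\Gamma}\cong S_n .
\]

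\textbf{Main obstacle.} The hard step is the group-theoretic characterization of profinite Dehn twists (inertia), which is needed to show that an arbitrary continuous automorphism acts on $\widehat{C}(\Sigma_{0,n})$. This uses the profinite Birman--Ivanov type analysis of centralizers of procyclic subgroups in \cite{Boggi}, together with the hypothesis $n\geq5$. I would invoke it from there rather than reprove it. A secondary technical point is the identification in Step 1 of the real étale fundamental group with $\widehat{P\Gamma}$, in particular matching the Galois splitting with the orientation-reversing involution.
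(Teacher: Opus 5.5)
The paper gives no proof of this statement; it is quoted from \cite{Boggi}. Your Step 1 (identifying the group with $\widehat{P\Gamma}=\Pi\rtimes\langle c\rangle$, where $\Pi=\widehat{\operatorname{PMod}(\Sigma_{0,n})}$ and $c$ is a reflection fixing the punctures) and your Step 3 are fine. The statement is indeed equivalent to $\operatorname{Aut}(\widehat{P\Gamma})\cong\widehat{\Gamma}$.

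The gap is Step 2(b). The black box ``$\operatorname{Aut}(\widehat{C}(\Sigma_{0,n}))\cong\widehat{\Gamma}$'' cannot be right. Your argument never uses the real structure, and run verbatim on the geometric group $\Pi=\pi_1^{\text{et}}((\mathcal{M}_{0,n})_{\bar{\mathbb{Q}}})$ it proves something false.
\begin{itemize}
\item The outer Galois action of $G_{\mathbb{Q}}$ on $\Pi$ is faithful (Belyi).
\item It preserves inertia, since the boundary divisors of $\overline{\mathcal{M}}_{0,n}$ are defined over $\mathbb{Q}$. So these automorphisms permute the Dehn-twist procyclic subgroups and act on $\widehat{C}(\Sigma_{0,n})$.
\item If every such action were induced by some $\gamma\in\widehat{\Gamma}$, your reasoning would make every automorphism of $\Pi$ a conjugation by an element of $\widehat{\Gamma}$.
\item Then $\operatorname{Out}(\Pi)$ would be a quotient of the finite group $\widehat{\Gamma}/\Pi$, contradicting the faithful action of the infinite group $G_{\mathbb{Q}}$.
\end{itemize}
Separately, the step ``the automorphism is determined by this action'' needs faithfulness of the action on $\widehat{C}$, not a trivial centralizer: if $\psi$ fixes every vertex, then so does $g^{-1}\psi(g)$ for every $g$. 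Granting faithfulness, the failure lies in the Ivanov-type identification. In genus $0$ the automorphism group of the profinite curve complex must contain Grothendieck--Teichm\"uller-type automorphisms not induced by $\widehat{\Gamma}$.

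What is missing is the mechanism by which adjoining $c$ kills this arithmetic part. $\Pi$ is centreless and $c$ acts non-innerly. Once $\Pi$ is known to be characteristic in $\Pi\rtimes\langle c\rangle$, one gets
\[
\operatorname{Out}(\Pi\rtimes\langle c\rangle)\cong C_{\operatorname{Out}(\Pi)}([c])/\langle [c]\rangle .
\]
For $n\ge 5$ one has $\operatorname{Out}(\Pi)\cong\widehat{GT}\times S_n$ (Hoshi--Minamide--Mochizuki), and $[c]$ is the image of complex conjugation in $\widehat{GT}$. So the theorem amounts to complex conjugation being self-centralizing in $\widehat{GT}$. This is the Grothendieck--Teichm\"uller analogue of the Artin--Schreier-type fact for $G_{\mathbb{Q}}$. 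That arithmetic rigidity, or an equivalent real-anabelian argument, is the actual content of the result. Your outline does not address it, and invoking \cite{Boggi} only for ``Ivanov plus inertia'' does not supply it.
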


\subsection{The Nottingham group.}\label{subsec:nottingham} Let $\mathbb{F}_q$ be the finite field with $q$ elements. The Nottingham group $\mathcal{N}(q)$ is defined as the group of formal power series $f=\sum_{n\geq 0}a_n t^n\in\mathbb{F}_q[[t]]$ with $a_0=0$ and $a_1=1$, where the group law is composition by formal substitution. The group $\mathcal{N}(q)$ is a finitely generated pro-$p$ group ($q=p^n$), contains all countably-based pro-$p$ groups and is hereditarily just infinite unless $q= 2^n$ and $n>1$ \cite{camina}. The automorphism group of $\mathcal{N}(q)$ was fully described for $p\geq 5$ in \cite{nottingham}. It was shown that all automorphisms of $\mathcal{N}(q)$ are compositions of inner automorphisms and automorphisms induced by a field automorphism. Theorem 1.1 of \cite{nottingham} implies the following:

\begin{proposition}
    If $p\geq 5$, the group $\operatorname{Out}(\mathcal{N}(q))$ is finite of order $n(q-1)$.
\end{proposition}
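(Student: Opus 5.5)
The plan is to read the statement off Theorem 1.1 of \cite{nottingham}, assuming $q=p^n$ with $p\geq 5$. That theorem says every continuous automorphism of $\mathcal{N}(q)$ has the form $f\mapsto \sigma(g^{-1}\circ f\circ g)$. Here $\sigma$ runs over the automorphisms of $\mathbb{F}_q$, acting on coefficients, and $g$ runs over the larger group
\[
\mathcal{N}^*(q)=\{\,a_1t+a_2t^2+\cdots : a_1\in\mathbb{F}_q^\times\,\}
\]
of all formal power series with invertible linear term. In other words, $\operatorname{Aut}(\mathcal{N}(q))$ is the image of $\mathcal{N}^*(q)\rtimes\operatorname{Gal}(\mathbb{F}_q/\mathbb{F}_p)$ under the conjugation-and-Frobenius action. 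Two observations then finish the count.

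The first is that this action is faithful. The centraliser of $\mathcal{N}(q)$ inside the group of all substitutions is trivial; this is part of the rigidity statement in \cite{nottingham}, and one can also see it directly, since $t+t^2$ and similar elements have trivial centralisers in $\mathcal{N}^*(q)$. It follows that $\mathcal{N}(q)$ has trivial centre, so $\operatorname{Inn}(\mathcal{N}(q))\cong\mathcal{N}(q)$. Faithfulness on the Galois part is also needed: a nontrivial $\sigma$, possibly composed with a conjugation, cannot act trivially. To see this, observe that conjugation by $g$ preserves the coefficient-field structure linearly, whereas $\sigma$ moves the coefficient of $t^2$ in $t+at^2$ non-linearly in $a$.

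The second is the quotient computation. Since $\mathcal{N}(q)$ is normal in $\mathcal{N}^*(q)$, we get
\[
\operatorname{Out}(\mathcal{N}(q))\cong \bigl(\mathcal{N}^*(q)\rtimes\operatorname{Gal}\bigr)/\mathcal{N}(q)\cong (\mathcal{N}^*(q)/\mathcal{N}(q))\rtimes\operatorname{Gal}.
\]
The map $g\mapsto a_1$ is a homomorphism $\mathcal{N}^*(q)\to\mathbb{F}_q^\times$ with kernel exactly $\mathcal{N}(q)$, so the first factor is cyclic of order $q-1$. The Galois group has order $n$. Hence $|\operatorname{Out}(\mathcal{N}(q))|=n(q-1)$, and the group is in particular finite.

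The main obstacle is the faithfulness step. One has to make sure no product $\sigma\cdot c_g$ with $(\sigma,g)\notin\{1\}\times\mathcal{N}(q)$ induces an inner automorphism. The first thing to try is evaluating on $t+at^2$ for generic $a$ and comparing degree-two coefficients. Everything else is quoting \cite{nottingham}, which may well state $\operatorname{Aut}(\mathcal{N}(q))\cong\mathcal{N}^*(q)\rtimes\operatorname{Gal}(\mathbb{F}_q/\mathbb{F}_p)$ outright, in which case this step is immediate.
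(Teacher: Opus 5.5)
Your proposal is correct and follows the paper's route: the paper reads the proposition directly off Theorem 1.1 of \cite{nottingham}, and your bookkeeping (faithfulness of the action of $\mathcal{N}^*(q)\rtimes\operatorname{Gal}(\mathbb{F}_q/\mathbb{F}_p)$, together with $\mathcal{N}^*(q)/\mathcal{N}(q)\cong\mathbb{F}_q^\times$) is exactly what turns that citation into the count $n(q-1)$. One small slip in your aside: $t+t^2$ does not have trivial centraliser, since it centralises itself; what the degree-two computation does show is that any element of $\mathcal{N}^*(q)$ centralising $t+t^2$ has linear coefficient $1$, so $C_{\mathcal{N}^*(q)}(\mathcal{N}(q))=Z(\mathcal{N}(q))$, and this is trivial because $\mathcal{N}(q)$ is just infinite and not virtually abelian.
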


\section{The commensurator}\label{sec:commensurator}

Let $G$ be a profinite group. A continuous isomorphism between two open subgroups of $G$ is a \textit{virtual automorphism} of $G$. We say that two virtual automorphisms are equivalent if they coincide on some open subgroup of $G$. The set of equivalence classes of virtual automorphisms of $G$ is naturally a group, called the \textit{commensurator} of $G$.

The virtual center $\operatorname{VZ}(H)$ of a profinite group $H$ is the subgroup consisting of elements whose centralizers are open. For an open subgroup $U$ of $G$, conjugation induces a canonical homomorphism
\[
    \iota_U: U\to\operatorname{Comm}(G)
\]

\noindent whose kernel is $\operatorname{VZ}(U)$. Let $\mathcal{U}$ be the family of open subgroups of $G$. We put a topology on $\operatorname{Comm}(G)$ by declaring $\{\operatorname{Im}(\iota_U),U\in\mathcal{U}\}$ to be a basis of open neighborhoods of $1$ — this is called the \textit{strong topology} of $\operatorname{Comm}(G)$ in \cite{commensurators}. We now collect some results about the commensurator.

\begin{proposition}[\cite{commensurators}]\label{commensurator}
    Let $G$ be a profinite group with $\operatorname{VZ}(G)=1$. Then $\operatorname{Comm}(G)$ is a locally compact, totally disconnected topological group (l.c.t.d.) in which $G$ embeds as an open subgroup. Further, it satisfies the following universal property: for every continuous monomorphism $\eta:G\to L$ into some l.c.t.d. group $L$ such that $\eta$ is a topological isomorphism from $G$ onto an open subgroup of $L$, there is a unique homomorphism $\eta_L:L\to\operatorname{Comm}(G)$ such that the diagram
    \[
    \begin{tikzcd}[row sep=3em, column sep=4em]
G \arrow[r, "\eta"] \arrow[dr, "\iota_G"] & L \arrow[d, dashed, "\eta_L"'] \\
& \mathrm{Comm}(G)
\end{tikzcd}
    \]
    commutes and $\operatorname{ker}(\eta_L)=\operatorname{VZ}(L)$.
\end{proposition}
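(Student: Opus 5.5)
We build $\operatorname{Comm}(G)$ directly as a group of germs and verify each claim in turn. Elements are equivalence classes $[\alpha]$ of continuous isomorphisms $\alpha:U\to V$ between open subgroups of $G$, with $\alpha\sim\beta$ if they agree on an open subgroup. Composition is $[\alpha][\beta]=[\alpha\circ\beta|_{\beta^{-1}(\operatorname{dom}\alpha\cap\operatorname{im}\beta)}]$, which is well defined because preimages of open subgroups under continuous isomorphisms of open subgroups are open.

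For an open $U$, the map $\iota_U(u)=[c_u|_W]$ (conjugation by $u$ restricted to any open $W$ with $W^u$ open, e.g. $W=U$ or $G$) is a homomorphism. Its kernel consists of those $u$ centralizing an open subgroup, i.e. $\operatorname{VZ}(U)$. When $\operatorname{VZ}(G)=1$, every $\operatorname{VZ}(U)=U\cap\operatorname{VZ}(G)=1$, so each $\iota_U$ is injective.

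\textbf{Topology.} We check that $\{\iota_U(U)\}$ satisfies the axioms for a neighbourhood basis of a group topology. Closure under intersection follows from $\iota_U(U)\cap\iota_V(V)=\iota_{U\cap V}(U\cap V)$: by injectivity of $\iota_G$, two conjugations have the same germ iff the elements agree. The key axiom is conjugation invariance. For $[\alpha]$ with $\alpha:W\to W'$ and $u\in U\cap W$, we have $[\alpha]\iota(u)[\alpha]^{-1}=\iota(\alpha(u))$, since $\alpha c_u\alpha^{-1}=c_{\alpha(u)}$ on a small open subgroup. Hence $[\alpha]\,\iota_{U\cap W}(U\cap W)\,[\alpha]^{-1}=\iota_{\alpha(U\cap W)}(\alpha(U\cap W))$, which is a basic neighbourhood. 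Thus $\operatorname{Comm}(G)$ is a topological group, and $\iota_G$ is a topological isomorphism onto the open subgroup $\iota_G(G)$: open subgroups of $G$ correspond exactly to basic neighbourhoods. Since $G$ is profinite, $\iota_G(G)$ is a compact open, totally disconnected subgroup, so $\operatorname{Comm}(G)$ is l.c.t.d.

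\textbf{Universal property.} Given $\eta:G\to L$ with $\eta(G)$ open, for $\ell\in L$ the subgroup $U_\ell=\eta^{-1}(\eta(G)\cap\ell^{-1}\eta(G)\ell)$ is open in $G$. We define $\eta_L(\ell)$ as the germ of $\eta^{-1}\circ c_\ell\circ\eta$ on $U_\ell$. This is a homomorphism by the same germ calculus, and $\eta_L\circ\eta=\iota_G$ by construction. Its kernel is the set of $\ell$ centralizing an open subgroup of $\eta(G)$, equivalently an open subgroup of $L$, which is $\operatorname{VZ}(L)$. Continuity follows from $\eta_L^{-1}(\iota_U(U))\supseteq\eta(U)$. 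For uniqueness, any homomorphism $\theta$ with $\theta\eta=\iota_G$ satisfies $\theta(\ell)\iota_G(g)\theta(\ell)^{-1}=\iota_G(\eta^{-1}(\ell\eta(g)\ell^{-1}))$ for $g\in U_\ell$, so $\theta(\ell)$ and $\eta_L(\ell)$ conjugate $\iota(g)$ identically for $g$ in an open subgroup. The required lemma is then: if $c\in\operatorname{Comm}(G)$ centralizes $\iota(W)$ for some open $W$, then $c=1$. Indeed, writing $c=[\alpha]$, we get $\iota(\alpha(w))=\iota(w)$, so $\alpha(w)=w$ by injectivity, and $\alpha$ is the identity germ.

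The main obstacle is the bookkeeping of domains in the conjugation-invariance and uniqueness steps, where one must always shrink to a common open subgroup. Each of these steps is where $\operatorname{VZ}(G)=1$ is used, through injectivity of the maps $\iota_U$. This is the construction of \cite{commensurators}.
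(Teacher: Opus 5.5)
Your proposal is correct and is essentially the standard germ construction of \cite{commensurators}; the paper only quotes this result and gives no proof of its own, so there is nothing further to compare. The two points you leave implicit both follow from what you wrote: Hausdorffness holds because $\{1\}$ is closed in the clopen Hausdorff subgroup $\iota_G(G)$, and to get $[\alpha]B[\alpha]^{-1}\subseteq\iota_U(U)$ you take $B=\iota_G(\alpha^{-1}(U\cap W'))$ in your conjugation formula.
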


\begin{remark}
    If $G$ has nontrivial virtual center, $\operatorname{Comm}(G)$ still has a property analogous to that in Proposition \ref{commensurator}. The difference is that $\iota_G$ is not injective and $\eta_L$ is not unique. Also, $\operatorname{Comm}(G)$ is Hausdorff if and only if $\operatorname{VZ}(G)$ is closed. See \cite{commensurators}.
\end{remark}

\begin{proposition}\label{Precompact}
    Let $(\iota_1,\iota_2)$ be a commensuration between profinite groups $G_1$ and $G_2$ with trivial virtual center over a profinite group $H$. Then $(\iota_1,\iota_2)$ admits a completion if and only if the subgroup $\langle \eta_1(G_1),\eta_2(G_2)\rangle$ of $\operatorname{Comm}(H)$ is precompact.
\end{proposition}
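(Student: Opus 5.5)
Here $\eta_i:G_i\to\operatorname{Comm}(H)$ denotes the homomorphism from Proposition \ref{commensurator}, applied to the open embedding $\iota_i:H\to G_i$. This applies because $\operatorname{VZ}(H)=1$: $H$ is open in $G_1$, so an element of $H$ with open centralizer in $H$ has open centralizer in $G_1$, and $\operatorname{VZ}(G_1)=1$. Since $\operatorname{VZ}(G_i)=1$, the kernel of $\eta_i$ is trivial. So each $\eta_i$ is injective, and it restricts to $\iota_H$ on $H$. I will also use that $\iota_H(H)$ is open in $\operatorname{Comm}(H)$ and that compact open subgroups of $\operatorname{Comm}(H)$ are profinite.

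\emph{Necessity.} Let $(j_1,j_2)$ be a completion into a profinite group $K$ generated by $j_1(G_1)$ and $j_2(G_2)$. Then $j_1\circ\iota_1=j_2\circ\iota_2$ embeds $H$ as an open subgroup of the l.c.t.d.\ group $K$. Proposition \ref{commensurator} gives a continuous $\eta_K:K\to\operatorname{Comm}(H)$ extending $\iota_H$.

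The key point is that $\eta_K\circ j_i=\eta_i$. Both maps are homomorphisms $G_i\to\operatorname{Comm}(H)$ that agree on $H$. The map $\eta_i$ is itself obtained from the universal property for $\iota_i:H\to G_i$. Also, $\eta_K\circ j_i$ satisfies the defining diagram, and uniqueness forces the two to coincide. Concretely, both send $g$ to the class of conjugation by $g$ on a small open subgroup of $H$.

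Hence $\langle\eta_1(G_1),\eta_2(G_2)\rangle=\eta_K(K)$, which is compact. So it is precompact.

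\emph{Sufficiency.} Suppose $L:=\overline{\langle\eta_1(G_1),\eta_2(G_2)\rangle}$ is compact. It is a closed subgroup of an l.c.t.d.\ group, hence a profinite group. The maps $\eta_i:G_i\to L$ are continuous monomorphisms, and they agree on $H$ via $\eta_1\iota_1=\iota_H=\eta_2\iota_2$.

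Openness: $\eta_i(G_i)\supseteq\iota_H(H)$, which is open in $\operatorname{Comm}(H)$, hence open in $L$. So $\eta_i(G_i)$ is open in $L$. Since $G_i$ is compact, $\eta_i$ is a homeomorphism onto its image. Therefore $(\eta_1,\eta_2)$ is a completion into the profinite group $L$.

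Continuity of $\eta_i$ needs a check. A basic neighborhood of the identity is $\iota_U(U)$ with $U$ open in $H$. Its preimage under $\eta_i$ contains $\iota_i(U)$, which is open in $G_i$.

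\emph{Main obstacle.} The delicate step is justifying $\eta_K\circ j_i=\eta_i$ from the universal property as stated. I would argue this directly from the definition: both maps act by conjugation on a common open subgroup $U\leq H$ with $U^g\leq H$.

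A second point arises in the pro-$\C$ setting: $L$ is profinite, but it need not be pro-$\C$. The proposition is stated for profinite groups, so this causes no trouble.
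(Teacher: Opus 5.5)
Your proof is correct and follows essentially the same route as the paper: you use the universal property of $\operatorname{Comm}(H)$ to place $\langle\eta_1(G_1),\eta_2(G_2)\rangle$ inside the compact group $\eta_K(K)$, and conversely you take $(\eta_1,\eta_2)$, landing in the closure, as the completion. You also supply details the paper leaves implicit: why $\operatorname{VZ}(H)=1$, why $\eta_K\circ j_i=\eta_i$, why each $\eta_i$ is continuous, and why each $\eta_i(G_i)$ is open.
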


\begin{proof}
    Since $\operatorname{VZ}(G_i)=1$ for $i=1,2$, it follows directly that $\operatorname{VZ}(H)=1$. By Proposition \ref{commensurator}, the group $\operatorname{Comm}(H)$ is a l.c.t.d. group in which $H$ embeds as an open subgroup. If there is a completion $(j_1,j_2)$ for $(\iota_1,\iota_2)$, then Proposition \ref{commensurator} ensures the existence of continuous homomorphisms $\eta_1,\eta_2$ and $\eta_K$ such that $\eta_i$ is injective for $i=1,2$ and the diagram
\[
\begin{tikzcd}
H \arrow[r,"\iota_i"] \arrow[dr,"\eta_H"'] &
G_i \arrow[r,"j_i"] \arrow[d,"\eta_i"] &
K \arrow[dl,"\eta_K"] \\
&
\mathrm{Comm}(H) &
\end{tikzcd}
\] 
    commutes. In particular, $\eta_K(K)$ contains $G_i=\eta_i(G_i)$ for $i=1,2$ and thus $\langle\eta_1(G_1),\eta_2(G_2)\rangle$ is precompact. Conversely, if $\langle\eta_1(G_1),\eta_2(G_2)\rangle$ is precompact, then $(\eta_1,\eta_2)$ may be regarded as a completion for $(\iota_1,\iota_2)$.
\end{proof}

In particular:

\begin{corollary}
    Let $(\iota_1,\iota_2)$ be a commensuration between profinite groups $G_1$ and $G_2$ with trivial virtual center over a profinite group $H$. If $\operatorname{Comm}(H)$ is compact (profinite), then $(\iota_1,\iota_2)$ admits a completion.
\end{corollary}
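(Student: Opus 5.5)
The corollary follows from Proposition \ref{Precompact}. Since every closed subset of a compact space is compact, it is enough to observe that $\langle \eta_1(G_1),\eta_2(G_2)\rangle$ is precompact when $\operatorname{Comm}(H)$ is compact.

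First, since $\operatorname{VZ}(G_i)=1$ for $i=1,2$ and $\iota_i(H)$ is open in $G_i$, we get $\operatorname{VZ}(H)=1$. Indeed, an element of $H$ whose centralizer in $H$ is open has, through $\iota_i$, an open centralizer in $G_i$, so it lies in $\operatorname{VZ}(G_i)=1$. Hence Proposition \ref{commensurator} applies: $\operatorname{Comm}(H)$ is an l.c.t.d. group containing $H$ as an open subgroup. Applying its universal property to the open embedding $\iota_i\colon H\to G_i$ gives homomorphisms $\eta_i\colon G_i\to\operatorname{Comm}(H)$ with $\eta_i\circ\iota_i=\iota_H$ and $\ker\eta_i=\operatorname{VZ}(G_i)=1$. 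These are the maps appearing in Proposition \ref{Precompact}.

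Now suppose $\operatorname{Comm}(H)$ is compact. The closure of $\langle \eta_1(G_1),\eta_2(G_2)\rangle$ is a closed subset of a compact space, so it is compact, and the subgroup is therefore precompact. Proposition \ref{Precompact} then gives a completion of $(\iota_1,\iota_2)$. Explicitly, take $K$ to be this closure. It is a compact, totally disconnected group, hence profinite. It contains $\eta_i(G_i)$, and $\eta_i(G_i)$ is open in $K$ because it contains the open subgroup $\iota_H(H)$.

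No step here is a real obstacle; the content sits entirely in Proposition \ref{Precompact}. The only point to check is the parenthetical ``(profinite)'' in the statement. This is automatic, because a compact l.c.t.d. group is profinite, so either reading of the hypothesis suffices.
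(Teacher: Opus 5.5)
Your proposal is correct and follows the paper's route: the paper states this corollary as an immediate consequence of Proposition~\ref{Precompact}, since every subgroup of a compact Hausdorff group is precompact. Your checks that $\operatorname{VZ}(H)=1$, and your explicit choice of the closure of $\langle\eta_1(G_1),\eta_2(G_2)\rangle$ as the profinite group $K$, simply unpack that proposition's proof.
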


The previous result applies to commensurations over the Nottingham group $\mathcal{N}(p)$ for $p>3$ \cite{ershov} and the so called \textit{hyperrigid} profinite groups, namely those for which the natural map $G\to\operatorname{Comm}(G)_S$ is an isomorphism \cite{commensurators}. In \cite[6.2]{commensurators}, the authors reformulate a result of Neukirch and Uchida and conclude that the absolute Galois group $G_\Q$ of the field of rational numbers is hyperrigid. For a number field $L$, we denote by $G_L$ its absolute Galois group.

\begin{corollary}
    A profinite group $G$ is commensurable with $G_\Q$ if and only if there is a finite normal subgroup $Q$ of $G$ such that $G/Q\cong G_L$ for some number field $L$. Also, if $\operatorname{VZ}(G)=1$, then $G\lesssim_o G_\Q$.
\end{corollary}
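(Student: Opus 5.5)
The plan is to exploit hyperrigidity of $G_\Q$: the natural map $G_\Q\to\operatorname{Comm}(G_\Q)$ is an isomorphism. We also use the Neukirch--Uchida theorem, in the reformulated form of \cite[6.2]{commensurators}, which says that open subgroups of $G_\Q$ are exactly the groups $G_L$ with $L$ a number field. A further standard input is that $G_L$ has trivial virtual center for every number field $L$.

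For the ``if'' direction, suppose $Q\trianglelefteq G$ is finite and $G/Q\cong G_L$. Since $Q$ is finite, it is closed, and since $G$ is Hausdorff there is an open normal subgroup $U$ of $G$ with $U\cap Q=1$. Then $U\cong UQ/Q$ is an open subgroup of $G/Q\cong G_L$. Moreover $G_L$ is open in $G_\Q$, being the absolute Galois group of a finite extension of $\Q$ inside $\overline{\Q}$. Hence $U$ is isomorphic to an open subgroup of $G_\Q$, so $G$ and $G_\Q$ are commensurable.

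For the ``only if'' direction, let $U\leq_o G$ be isomorphic to an open subgroup $V\leq_o G_\Q$. Choose $\varphi\colon U\to V$. Conjugation by elements of $G$ on the open subgroups of $U$, transported through $\varphi$, gives virtual automorphisms of $V$, hence of $G_\Q$. This yields a continuous homomorphism
\[
\eta\colon G\to\operatorname{Comm}(G_\Q)\cong G_\Q ,
\]
which is the universal map of Proposition \ref{commensurator}, or rather its version from the Remark when $\operatorname{VZ}(G)\neq1$, applied to the embedding $V\cong U\leq_o G$. Its kernel is $\operatorname{VZ}(G)$. By construction $\eta|_U$ agrees with $\varphi$ composed with the inclusion, because $V$ has trivial virtual center. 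So $\eta(G)$ contains the open subgroup $V$, and $\eta(G)$ is compact, so it is open in $G_\Q$. Set $L:=\overline{\Q}^{\eta(G)}$, so that $\eta(G)=G_L$ by Galois theory, and $L$ is a number field because $\eta(G)$ is open. Put $Q:=\ker\eta=\operatorname{VZ}(G)$. Since $\eta$ is injective on $U$, we get $Q\cap U=1$, so $Q$ embeds in $G/U$ and is finite. It is normal as a kernel, and $G/Q\cong G_L$.

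The second claim follows from this: if $\operatorname{VZ}(G)=1$, then $Q=1$ and $G\cong G_L\leq_o G_\Q$.

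The main obstacle is justifying the homomorphism $\eta$ when $\operatorname{VZ}(G)$ may be nontrivial. Proposition \ref{commensurator} is stated for the source group with trivial virtual center, and here that source is $V\leq_o G_\Q$, so it applies. The target $G$ must be a l.c.t.d.\ group containing $V$ as an open subgroup, which holds. The conclusion $\ker\eta_L=\operatorname{VZ}(L)$ is exactly what is needed, so I would invoke Proposition \ref{commensurator} with its roles arranged this way rather than relying on the Remark. The remaining work is routine bookkeeping: checking that $\operatorname{Comm}(V)=\operatorname{Comm}(G_\Q)$ and that the hyperrigidity isomorphism is compatible with $V\leq G_\Q$.
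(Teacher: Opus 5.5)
Your proof is correct and follows the same route as the paper: you apply the universal property of $\operatorname{Comm}$ (Proposition \ref{commensurator}) to a common open subgroup $V$ with $\operatorname{VZ}(V)=1$. The kernel $\operatorname{VZ}(G)$ then meets $U$ trivially and is finite, while the image is an open subgroup of $\operatorname{Comm}(V)\cong G_\Q$, i.e.\ some $G_L$. One small correction: describing the open subgroups of $G_\Q$ as the groups $G_L$ is plain infinite Galois theory, as you in fact use later, and Neukirch--Uchida enters only through the hyperrigidity $\operatorname{Comm}(G_\Q)\cong G_\Q$.
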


\begin{proof}
    If $G$ is commensurable with $G_\Q$, let $H$ be a common open subgroup of $G$ and $G_\Q$. The group $\operatorname{VZ}(G)$ is finite and $G/\operatorname{VZ}(G)$ is isomorphic to an open subgroup of $\operatorname{Comm}(H)\cong G_\Q$ by the universal property of the commensurator. The converse is clear.
\end{proof}

Observe that one implication of \ref{Precompact} holds in general:

\begin{proposition}
    If a commensuration $(\iota_1,\iota_2)$ between profinite groups $G_1$ and $G_2$ over a profinite group $H$ admits a completion, then the subgroup $\langle \eta_1(G_1),\eta_2(G_2)\rangle$ of $\operatorname{Comm}(H)$ is precompact.
\end{proposition}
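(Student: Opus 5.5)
The strategy is to run the forward half of the proof of Proposition \ref{Precompact}, but without assuming trivial virtual center. The price is that the maps into $\operatorname{Comm}(H)$ are no longer injective. Let $(j_1,j_2)$ be a completion with common codomain $K$. As in the proof of Lemma \ref{OutFin}, we replace $K$ by the closed subgroup generated by $j_1(G_1)$ and $j_2(G_2)$; these images are open, so this is harmless. Then $j_1\circ\iota_1(H)=j_2\circ\iota_2(H)$ is an open subgroup of the profinite group $K$. Through this identification, every element of $K$ acts on $H$ by a virtual automorphism: conjugation by $k$ is a continuous isomorphism from the open subgroup $H\cap H^{k^{-1}}$ onto $H^{k}\cap H$. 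This gives a homomorphism $\eta_K:K\to\operatorname{Comm}(H)$. In the same way, conjugation inside $G_i$ defines $\eta_i:G_i\to\operatorname{Comm}(H)$, and these are the maps $\eta_i$ in the statement. By construction $\eta_i=\eta_K\circ j_i$, since conjugation by $g\in G_i$ on an open subgroup of $H$ is conjugation by $j_i(g)$ in $K$.

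The first step is to show that $\eta_K$ is continuous for the strong topology, or at least that it sends open subgroups of $K$ into the basic neighbourhoods $\operatorname{Im}(\iota_U)$. Here we need the non-Hausdorff version of the commensurator described in the Remark following Proposition \ref{commensurator}. Let $V$ be an open subgroup of $K$. Then $V\cap H$ is open in $H$, and for $v\in V\cap H$ the element $\eta_K(v)$ is exactly $\iota_{V\cap H}(v)$. So $\eta_K(V\cap H)=\operatorname{Im}(\iota_{V\cap H})$ is a basic open neighbourhood of $1$. Hence $\eta_K(K)$ is a union of finitely many cosets of $\operatorname{Im}(\iota_H)=\eta_K(H)$, because $[K:H]<\infty$.

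The second step is precompactness. The subgroup $\langle\eta_1(G_1),\eta_2(G_2)\rangle=\eta_K(\langle j_1(G_1),j_2(G_2)\rangle)=\eta_K(K)$ is covered by finitely many translates of $\iota_H(H)$. The group $\iota_H(H)$ is compact, being a continuous image of the compact group $H$. It remains to check that $\iota_H$ is continuous into $\operatorname{Comm}(H)$ with the strong topology, which holds because $\iota_H^{-1}(\operatorname{Im}\iota_U)\supseteq U$ for each open $U\le H$. A finite union of compact sets is compact, so $\eta_K(K)$ is contained in a compact set, and the generated subgroup is therefore precompact.

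The main obstacle is the bookkeeping when $\operatorname{VZ}(H)\neq1$. Then $\operatorname{Comm}(H)$ need not be Hausdorff, so ``precompact'' should be read as ``contained in a compact subset''. With that reading, the argument above goes through verbatim, and it relies only on $[K:H]<\infty$ and the definition of the strong topology, not on the universal property. I would also check that $\eta_K$ is well defined on equivalence classes, but this is immediate because conjugation by $k$ is an honest isomorphism between open subgroups of $H$.
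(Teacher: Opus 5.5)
Your argument is correct in substance, but it reaches compactness by a different route than the paper.

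\textbf{How the two routes differ.} The paper takes $\eta_K$ to be continuous, via the universal property in the non-Hausdorff setting. It then passes to the Hausdorff quotient $\pi:\operatorname{Comm}(H)\to\operatorname{Comm}(H)/\overline{\{1\}}$ and notes that $\overline{\pi(A)}$ is a closed subgroup of the compact Hausdorff group $\pi\eta_K(K)$. Finally it lifts back through $\overline{A}=\pi^{-1}(\overline{\pi(A)})$.

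You instead write $K=\bigcup_{i=1}^n k_iH$ and get $A=\eta_K(K)=\bigcup_i \eta_K(k_i)\,\iota_H(H)$, a finite union of translates of the compact set $\iota_H(H)$. This needs only continuity of $\iota_H$, which you check directly from the definition of the strong topology. So you never use the universal property, and you even get that $A$ itself is compact.

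A small side remark on your first step. Showing that $\eta_K(V\cap H)$ is a basic open set is an openness statement, not continuity, and you do not need it: the coset decomposition is purely algebraic. If you want continuity of $\eta_K$, it follows from $\eta_K^{-1}(\operatorname{Im}\iota_U)\supseteq U$, since $U$ is open in $K$.

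\textbf{The step that needs an argument.} You settle the last step by declaring that ``precompact'' means ``contained in a compact set''. The statement, as in the paper, is about the closure $\overline{A}$ being compact. In a non-Hausdorff space compact sets need not be closed, so $A\subseteq C$ with $C$ compact does not by itself give $\overline{A}$ compact. This is exactly the issue the paper's proof is careful about.

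It is easily repaired in any topological group:
\begin{itemize}
\item $\overline{\{1\}}$ is the intersection of all neighbourhoods of $1$, so every open set is a union of cosets of $\overline{\{1\}}$.
\item Hence $C\overline{\{1\}}=\pi^{-1}(\pi(C))$ is compact, because any open cover of $C$ automatically covers $C\overline{\{1\}}$.
\item It is also closed, because $\pi(C)$ is compact, hence closed, in the Hausdorff group $\operatorname{Comm}(H)/\overline{\{1\}}$.
\item Therefore $\overline{A}\subseteq C\overline{\{1\}}$ is a closed subset of a compact set, so it is compact.
\end{itemize}
With this lemma added (take $C=\eta_K(K)$), your proof is complete.
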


\begin{proof}
    One needs to be careful since in general $G:=\operatorname{Comm}(H)$ is not Hausdorff. Let $(j_1,j_2)$ be a completion of $(\iota_1,\iota_2)$, with common codomain $K$. By similar arguments to those in Proposition \ref{Precompact}, $A:=\langle\eta_1(G_1),\eta_2(G_2)\rangle\leq\eta_K(K)$. Denote by $\pi:G\to G/\overline{\{1\}}$ the Hausdorff quotient map of $G$. Then $\overline{A}=\pi^{-1}(\overline{\pi(A)})$, and $\overline{\pi(A)}$ is compact since it is a closed subgroup of $\pi\circ\eta_K(K)$, which is compact and Hausdorff. This implies that $\overline{A}$ is compact. 
\end{proof}


\section{Commensurating graphs of pro-$\C$ groups}\label{sec:graphs}

A graph of groups is \textit{commensurating} if each edge group has finite-index image in its adjacent vertex groups. In the Appendix of \cite{Minasyan}, Minasyan considers the following generalization of the problem of completion of a commensuration. Let $(\mathcal{G},\Gamma)$ be a finite commensurating graph of groups, and let $\pi_1(\mathcal{G},\Gamma)$ be its fundamental group. When do there exist a group $Q$ and a homomorphism $\psi:\pi_1(\mathcal{G},\Gamma)\to Q$ whose restriction to each vertex group is injective and has finite-index image in $Q$?

When the graph $\Gamma$ consists of a single edge with two distinct vertices, the problem above reduces to the question of the existence of a completion for a commensuration. In this section, we establish pro-$\C$ analogs of Corollaries A.8 and A.9 of \cite{Minasyan}, extending Theorem \ref{TeoA} to finite commensurating graphs of groups.

\subsection{Finite graphs of pro-$\C$ groups.}

We now introduce several concepts and standard results concerning the theory of profinite groups acting on (profinite) graphs. The standard reference on this topic is \cite{RibG}, although we will restrict ourselves to a much less general setting than in the cited book.

A \textit{graph} $\Gamma$ is a set that is a disjoint union of subsets $V$ and $E$, together with maps
\[
    d_0,d_1:\Gamma\to V,
\]
which restrict to the identity on $V$. We call $V$ the vertex set of $\Gamma$ and $E$ the edge set of $\Gamma$. A \textit{finite graph of pro-$\C$ groups} $(\mathcal{G}, \Gamma)$ over a finite graph $\Gamma$ consists of pro-$\C$ groups $\mathcal{G}(m)$ for each $m\in\Gamma$ and continuous monomorphism $\partial_i:\mathcal{G}(e)\to\mathcal{G}(d_i(e))$ for every $e\in E$ and $i=1,2$. We call a finite graph of pro-$\C$ groups $(\mathcal{G},\Gamma)$ \textit{commensurating} if $\Gamma$ is connected and $\partial_i(\mathcal{G}(e))$ is open in $\mathcal{G}(d_i(e))$ for every $e\in E$ and $i=0,1$.

Let $(\mathcal{G},\Gamma)$ be a finite connected graph of pro-$\C$ groups. Fix a maximal subtree $T$ of $\Gamma$. The \textit{fundamental pro-$\C$ group $\Pi_1^{\C}(\mathcal{G},\Gamma)$} of $(\mathcal{G},\Gamma)$ is the pro-$\C$ group
\[
    \left(F_\C\amalg\coprod_{v\in V}\mathcal{G}(v)\right)/N,
\]
where $F_\C$ is the free pro-$\C$ group with basis $\{t_e:e\in E\}$ and $N$ is the closed normal subgroup generated by the set
\[
    \{t_e,e\in E\cap T\}\cup\{\partial_0(g)^{-1}t_e\partial_1(g)t_e^{-1},g\in\mathcal{G}(e),e\in E\}.
\]
Here, $\amalg$ denotes the free pro-$\C$ product. At first glance, the definition of $\Pi_1^\C(\mathcal{G},\Gamma)$ seems to depend on the choice of $T$, but it does not up to isomorphism.

Notice that by the definition of $\Pi_1^\C(\mathcal{G},\Gamma)$, we have a natural homomorphism
\[
    \varphi_v:\mathcal{G}(v)\to\Pi_1^\C(\mathcal{G},\Gamma)
\]
for each $v\in V$. We denote $\varphi_v(\mathcal{G}(v))$ by $\Pi(v)$ and we call the graph of pro-$\C$ groups $(\mathcal{G},\Gamma)$ \textit{injective} if $\varphi_v$ is a monomorphism for each $v\in V$ (in which case we identify $\Pi(v)$ with $\mathcal{G}(v)$). When $\Gamma$ consists of a single edge $e$ with distinct incidence vertex $u$ and $v$, the property of injectiveness of $(\mathcal{G},\Gamma)$ is the same as the properness of the free pro-$\C$ amalgamated product $\mathcal{G}(v)\amalg_{\mathcal{G}(e)}\mathcal{G}(u)$.



Following \cite{Minasyan}, we introduce the following definition:

\begin{definition}\label{def:tame}
    Let $(\mathcal{G},\Gamma)$ be a finite injective commensurating graph of pro-$\C$ groups with fundamental pro-$\C$ group $\Pi:=\Pi_1^\C(\mathcal{G},\Gamma)$ and let $N$ be a closed normal subgroup of $\Pi$. We say that $(\mathcal{G},\Gamma)$ is \textit{tame over $N$} if
    \begin{itemize}
        \item[(i)] $N$ is contained as an open subgroup of each vertex and edge group of $(\mathcal{G},\Gamma)$;
        \item[(ii)] the image of $\Pi$ in $\operatorname{Out}N$ induced by conjugation is finite. 
    \end{itemize}
    We say that $(\mathcal{G},\Gamma)$ is \textit{tame} if it is tame over
    some $N$.
\end{definition}


It is not clear at first that for an arbitrary finite injective commensurating graph of pro-$\C$ groups there exists a normal subgroup of its fundamental pro-$\C$ group satisfying $(i)$. The next lemma, which is a version of \cite[4.4]{BeZa}, ensures the existence of such an $N$. This may be viewed as a generalization of Lemma \ref{PropernessLem}.

\begin{lemma}\label{GraphCore}
    Let $(\mathcal{G},\Gamma)$ be a finite commensurating graph of pro-$\C$
    groups with fundamental pro-$\C$ group $\Pi$.
    Then $\Pi$ contains a closed normal subgroup $N$ such that
    $N$ is an open subgroup of $\Pi(e)$, for every $e\in E$.

\end{lemma}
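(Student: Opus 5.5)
The plan is to first produce a single closed subgroup $M$ of $\Pi$ that is open in every $\Pi(v)$ and every $\Pi(e)$, and then to pass to a normal subgroup of $\Pi$ inside $M$ without losing openness. Openness in the vertex groups comes from commensurability. Passing to a normal subgroup is where the compactness of pro-$\C$ groups has to be used, since the abstract analogue fails: in $BS(1,2)$ the edge group has trivial normal core.

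\emph{Step 1: a common open subgroup.} For each edge $e$ and $i=0,1$, the group $\Pi(e)=\varphi_{d_i(e)}(\partial_i(\mathcal{G}(e)))$ is a closed subgroup of finite index in the compact group $\Pi(d_i(e))$, hence open in it. If $A$ is open in $\Pi(v)$ and $e$ joins $v$ to $w$, then $A\cap \Pi(e)$ is open in $\Pi(e)$, hence open in $\Pi(w)$. Since $\Gamma$ is finite and connected, an induction along paths shows that
\[
    M:=\bigcap_{e\in E}\Pi(e)
\]
is open in every $\Pi(v)$ and every $\Pi(e)$. The same holds for its core $M_0:=\bigcap_{v\in V}\bigcap_{g\in \Pi(v)}M^g$, since each of these intersections is finite.

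\emph{Step 2: normality via a finite quotient.} Since $\Pi(v)$ is compact and closed in $\Pi$ and $M_0$ is open in it, there is an open normal subgroup $U_v$ of $\Pi$ with $U_v\cap\Pi(v)\le M_0$. Put $U:=\bigcap_v U_v$, which is open normal in $\Pi$. Then $U_v':=U\cap\Pi(v)$ is normal in $\Pi(v)$ and open in $\Pi(v)$. Conjugation by $t_e$ carries $U\cap\Pi(e)$ (viewed from the $d_1$ side) onto $U\cap\Pi(e)$ (viewed from the $d_0$ side), because $U$ is normal. So the family $\{U\cap \Pi(v)\}_v$ is invariant under every generator of $\Pi$ in the following sense. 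Each $\Pi(v)$ normalizes its own member, and $t_e$ matches the traces on the edge groups.

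I would then take $N$ to be the closed subgroup generated by all $U\cap\Pi(e)$, $e\in E$, or, if that is too large, the intersection $\bigcap_e (U\cap\Pi(e))$, refined as in Lemma \ref{PropernessLem}. The aim is to show that this $N$ is normalized by each $\Pi(v)$ and each $t_e$. Since these generate $\Pi$ topologically, $N$ would then be normal in $\Pi$. It is open in each $\Pi(e)$ because it contains the open subgroup $U\cap \Pi(e)$ (respectively, equals a finite intersection of open subgroups).

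\emph{Main obstacle.} The difficulty is ensuring that the traces $U\cap\Pi(v)$ genuinely coincide on overlaps, so that one subgroup is normalized by all vertex groups at once. This is the analogue of the compatible families $U_{1,\lambda}\cap H=U_{2,\lambda}\cap H$ in Lemma \ref{PropernessLem}. A careless choice may give only that $U\cap\Pi(v)$ and $U\cap \Pi(w)$ agree on $\Pi(v)\cap\Pi(w)$, while failing to be contained in each other.

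To handle this, I would first try to shrink $U$ so that $U\cap\Pi(v)\le \Pi(e)$ for all $v$ and $e$. This is possible since $M_0\le\Pi(e)$. Then $U\cap\Pi(v)=U\cap M_0$ for every $v$, which gives a single subgroup $N:=U\cap M_0$. This $N$ is normalized by each $\Pi(v)$, since $U$ is normal in $\Pi$ and $M_0$ is normal in $\Pi(v)$. It is also normalized by each $t_e$, since $t_e$ conjugates $U\cap\Pi(e)=N$ into $U\cap \Pi(e)=N$. If this identification fails, the fallback is to invoke the action of $\Pi$ on its standard pro-$\C$ tree (\cite{RibG}) and argue as in \cite[4.4]{BeZa}.
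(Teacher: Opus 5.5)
Your committed construction (the last paragraph) is correct and has the same skeleton as the paper's proof. You intersect the edge groups to get a subgroup open in every vertex group. You then use compactness to pick an open normal $U$ of $\Pi$ whose trace on each $\Pi(v)$ lies in that intersection. Finally you take $N$ to be the common value $U\cap\Pi(v)$, which every vertex group normalizes because $U\trianglelefteq\Pi$.

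Where you differ is the stable letters. The paper adds, for each $e\notin T$, an open normal $W_e$ with $W_e\cap H\le H\cap H^{t_e}$ to obtain $N\le N^{t_e}$. It then uses the fact that a closed subgroup of a profinite group cannot be conjugate onto a proper subgroup of itself. You instead put both images $A_0=\varphi_{d_0(e)}(\partial_0(\mathcal{G}(e)))$ and $A_1=\varphi_{d_1(e)}(\partial_1(\mathcal{G}(e)))$ of every edge group into $M$. Then $N=U\cap A_0=U\cap A_1$, and the defining relation $t_eA_1t_e^{-1}=A_0$ gives $t_eNt_e^{-1}=N$ at once. This removes both the $W_e$ and that fact about closed subgroups. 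It also confines compactness to the choice of $U$, which is exactly the step your $BS(1,2)$ remark shows must fail for abstract groups.

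Three things need tidying.

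First, state explicitly that $M$ is the intersection of both images of every edge group. For $e\notin T$ these are different subgroups of $\Pi$, so your path induction in Step 1 is not valid across such edges: $A\cap\Pi(e)$ taken on the $v$-side does not lie in $\Pi(w)$. Run the induction along a maximal subtree $T$ to see that all $\Pi(v)$ are pairwise commensurable. Each non-tree image is open in its own vertex group, so the finite intersection of all images is open in every vertex group.

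Second, $M_0$ is unnecessary. Your stated reason why $\Pi(v)$ normalizes $N$ ("$M_0$ is normal in $\Pi(v)$") is false in general, since the core of $M$ in $\Pi(w)$ need not be $\Pi(v)$-invariant. The correct reason is the identity $N=U\cap\Pi(v)$ you derived one line earlier.

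Third, drop the hedged alternatives (the subgroup generated by the traces, and the fallback to the tree action), since the final construction already works.
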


\begin{proof}
    Let $T$ be a maximal subtree of $\Gamma$. If $e\in E\cap T$ and $u_i=d_i(e), i=0,1$, then the groups $\Pi(u_0)$ and $\Pi(u_1)$ are commensurable. Since $T$ is connected and contains all vertices of $\Gamma$, the groups $\Pi(u)$ are pairwise commensurable for all $u\in V$. Since $\Gamma$ is finite, the group
    \[  
        H:=\bigcap_{e\in E}\Pi(e)
    \]
    is a closed subgroup of $\Pi$ which is open in all edge and vertex groups. Notice that the subgroups $H^{t_e}$ are commensurable to $H$ for all stable letters $t_e$ ($e\in E-T$). 

    For each $v\in V$, let $U_v$ be an open normal subgroup of $\Pi$ such that $U_v\cap \Pi(v)\leq H$ and for each $e\in E-T$, let $W_e$ be an open normal subgroup of $\Pi$ with $W_e\cap H\leq H\cap H^{t_e}$. Let $U:=\bigcap_{v\in V} U_v \cap\bigcap_{e\in E-T} W_e$. We claim that $N:=U\cap H$ is the desired subgroup. It is clear that $N$ is an open subgroup of $\Pi(e)$ for every $e\in E$. It remains to show that $N$ is normal in $\Pi$.

    Since $H\leq\Pi(v)$ and $U\cap\Pi(v)\leq H$, we have that $N=U\cap\Pi(v)$ for every $v\in V$. Hence, if $g\in\Pi(v)$, $N^g=(U\cap\Pi(v))^g=U\cap \Pi(v)=N$ since $U$ is normal in $\Pi$. Also, if $t_e$ is a stable letter (that is, $e\in E-T$), then $N^{t_e}=(U\cap H)^{t_e}=U\cap H^{t_e}\geq (U\cap H)\cap (H\cap H^{t_e})=U\cap H=N$ since $U\cap H\leq W_e\cap H\leq H\cap H^{t_e}$. This implies that $N^{t_e}=N$ since a closed subgroup of a profinite group cannot be conjugate to a proper subgroup. Since $\Pi=\langle\Pi(v),t_e\rangle$, it follows that $N$ is normal in $\Pi$.
\end{proof}

\begin{remark}
    The previous lemma ensures the existence of a normal subgroup of $\Pi_1^\C(\mathcal{G},\Gamma)$ satisfying condition $(i)$ of Definition \ref{def:tame} for any finite injective commensurating graph of pro-$\C$ groups $(\mathcal{G},\Gamma)$, so that the main condition of tameness is condition $(ii)$.
\end{remark}

\begin{remark}\label{QuotVF}
    Let $(\mathcal{G},\Gamma)$ be a finite injective commensurating graph of pro-$C$ groups and $N$ a normal subgroup of $\Pi_1^\C(\mathcal{G},\Gamma)$ satisfying condition $(i)$ of Definition \ref{def:tame}. We may consider the graph of pro-$\C$ groups $(\overline{\mathcal{G}},\Gamma)$ with vertex and edge groups $\mathcal{G}(v)/N$ and $\mathcal{G}(e)/N$, respectively, and the natural monomorphisms between edge and vertex groups. The vertex groups of $(\overline{\mathcal{G}},\Gamma)$ are finite groups in $\C$, and $\Pi_1^\C(\overline{\mathcal{G}},\Gamma)\cong\Pi_1^\C(\mathcal{G},\Gamma)/N$ is virtually a free pro-$\C$ group of finite rank.
\end{remark}

\begin{theorem}\label{A.8}
    Let $(\mathcal{G},\Gamma)$ be an finite injective commensurating graph of pro-$\C$ groups with fundamental pro-$\C$ group $\Pi$. If $(\mathcal{G},\Gamma)$ is tame over $N$, then 
    \begin{itemize}
        \item[(i)] there is a closed normal subgroup $M$ of $\Pi$ which is free pro-$\C$ of finite rank such that $\mathcal{G}(v)\cap M =1$ and $\mathcal{G}(v) M$ is open in $\Pi$ for every $v\in V$.
        \item[(ii)] the pro-$\C$ group $\Pi$ embeds as an open subgroup of $\Pi/M\times \Pi/N$.
    \end{itemize}
\end{theorem}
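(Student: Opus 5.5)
Mirroring the proof of Theorem \ref{TeoA}, with the amalgamated product replaced by $\Pi$ and $H_G$ replaced by $N$, I would argue as follows.

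By Remark \ref{QuotVF}, $\Pi/N\cong\Pi_1^\C(\overline{\mathcal{G}},\Gamma)$ is virtually free pro-$\C$ of finite rank. Choose an open normal subgroup $J$ of $\Pi/N$ that is free pro-$\C$; it is finitely generated. Let $\pi:\Pi\twoheadrightarrow\Pi/N$ be the projection. Since $J$ is free, the extension $1\to N\to\pi^{-1}(J)\to J\to1$ splits, so $\pi^{-1}(J)=NJ'$ with $J'\cong J$ and $J'\cap N=1$. Tameness condition (ii) says the image of $\Pi$ in $\operatorname{Out}N$ is finite. Lemma \ref{LemmaOut} therefore gives $K'\leq NJ'$ with $NK'=N\times K'$ and $\pi(K')$ open in $J$.

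Because $J$ is finitely generated, we can choose an open characteristic subgroup $\overline{K}$ of $J$ inside $\pi(K')$. Put $K=\pi^{-1}(\overline{K})\cap K'$. Then $N\times K=\pi^{-1}(\overline{K})$ is open and normal in $\Pi$, and $K\cong\overline{K}$ is finitely generated, so its abelianization is finitely generated. Theorem \ref{TechA} now yields a closed subgroup $M\leq NK$ with the following properties:
\begin{itemize}
\item $M$ is normal in $\Pi$;
\item $NM=N\times M$ is open in $\Pi$.
\end{itemize}
From its proof, we may also take $M\cap N=1$ and $M\leq ZK$, where $Z$ is the center of $N$. Then $\pi|_M$ is injective, so $M\cong\pi(M)$, which is open in $J$ because $NM$ is open. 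Hence $M$ is free pro-$\C$ of finite rank: an open subgroup of a finitely generated free pro-$\C$ group is again free pro-$\C$ of finite rank, the Nielsen–Schreier property for pro-$\C$ with $\C$ extension closed.

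For (i), take a vertex $v$. The group $\pi(\mathcal{G}(v))=\mathcal{G}(v)/N$ is finite, while $\pi(M)\leq J$ is torsion-free. Thus $\pi(\mathcal{G}(v)\cap M)$ is trivial, so $\mathcal{G}(v)\cap M\leq N\cap M=1$. Openness follows because $\mathcal{G}(v)M\supseteq NM$, which is open.

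For (ii), consider the diagonal map $\Pi\to\Pi/M\times\Pi/N$. Its kernel is $M\cap N=1$, and since $\Pi$ is compact it is a topological embedding onto a closed subgroup. It remains to check that the image is open. The image contains the image of $NM=N\times M$, which is $\bigl(NM/M\bigr)\times\bigl(NM/N\bigr)$ because $N$ and $M$ commute and intersect trivially. Here $NM/M$ is open in $\Pi/M$ and $NM/N$ is open in $\Pi/N$, since $NM$ is open in $\Pi$. So this product is open, and the image is open as well.

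The main obstacle I expect is guaranteeing that $M$ is free pro-$\C$ of finite rank and meets each vertex group trivially. Both rely on $M$ embedding into $J$ via $\pi$, which requires $M\cap N=1$. That fact is established inside the proof of Theorem \ref{TechA} rather than in its statement. If the statement of Theorem \ref{TechA} alone is not enough, I would re-derive $M\cap N=1$ from its construction.
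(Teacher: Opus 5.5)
Your proposal is correct and follows the paper's proof essentially step for step: the splitting over a free open normal subgroup $J$ of $\Pi/N$, Lemma \ref{LemmaOut}, passing to a characteristic open subgroup of $J$, Theorem \ref{TechA}, the torsion argument for $\mathcal{G}(v)\cap M=1$, and the diagonal embedding into $\Pi/M\times\Pi/N$. Your closing worry is unnecessary, because $M\cap N=1$ is already part of the conclusion of Theorem \ref{TechA}: it gives $NM$ as an internal direct product $N\times M$, so you need not reopen its proof.
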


\begin{proof}
    The proof is similar to as the proof of Theorem \ref{TeoA}. By Remark \ref{QuotVF}, the group $\Pi/N$ is virtually a free pro-$\C$ group of finite rank. Let $J$ be an open normal subgroup of $\Pi/N$ which is free pro-$\C$.

    Let $\pi:\Pi\twoheadrightarrow\Pi/N$ be the canonicial projection. Since $J$ is free, the short exact sequence
    \[
        1\to N\to\pi^{-1}(J)\to J\to 1
    \]
    splits as a semidirect product $N\rtimes J'$, where $J'$ is a lift of $J$ to $\pi^{-1}(J)$. By condition $(ii)$ of Definition \ref{def:tame}, the image of $\Pi$ in $\operatorname{Out}N$ is finite, so Lemma \ref{LemmaOut} yields a subgroup $K'$ of $NJ'$ such that $NK'=N\times K'$ and $K'$ is mapped to an open subgroup of $J$.

    Since $J$ is finitely generated, it has only finitely many open subgroups of index $[J:\pi(K')]$. We may therefore pick an open characteristic subgroup $\overline{K}$ of $J$ inside $\pi(K')$. We set $K:=\pi^{-1}(\overline{K})\cap K'$. Since $\overline{K}$ is characteristic in $J$, it is normal in $\Pi/N$ and therefore $\pi^{-1}(\overline{K})=NK=N\times K$ is an open normal subgroup of $\Pi$. Note that $pi|_{K'}$ is injective, so $K\cong\overline{K}$ is a free pro-$\C$ group of finite rank.

    We are in the setting of Theorem \ref{TechA}: $N$ is a normal subgroup of $\Pi$, and $K$ is a finitely generated subgroup of $\Pi$ such that $NK=N\times K$ is an open normal subgroup of $\Pi$. Therefore, we can choose a subgroup $M$ of $NK$ such that $M$ is normal in $\Pi$, $NM=N\times M$ is open, and $\pi(M)\cong M$ is open in $J$. We immediately conclude that $M$ is free pro-$\C$ of finite rank.

    Let $v\in V$ and $x\in\mathcal{G}(v)\cap M$. Then $\pi(x)$ lies both in the finite subgroup $\pi(\mathcal{G}(v))=\mathcal{G}(v)N/N\leq\Pi/N$ of $\Pi/N$ and in $\pi(M)\cong M$, which is torsion-free. Therefore $\pi(x)=1$ so that $x=1$ since $M\cap N=1$. This proves $(i)$.

    For $(ii)$, we can consider the natural monomorphism
    \[
        \varphi:\Pi\to\Pi/M\times\Pi/N.
    \]
    The group $G:=\varphi(\Pi)$ contains $\varphi(N)=NM/M\times 1$ and $\varphi(M)=1\times NM/N$. It follows that $MN/M\times MN/N\leq G$, which implies that $G$ is open in $\Pi/M\times\Pi/N$ (since $MN$ is open in $\Pi$).
\end{proof}

\begin{theorem}\label{A.9}
    Let $(\mathcal{G},\Gamma)$ be a finite commensurating graph of pro-$\C$ groups with fundamental pro-$\C$ group $\Pi$. The following are equivalent:
    \begin{itemize}
        \item[(i)] $(\mathcal{G},\Gamma)$ is tame (and, in particular, injective);
        \item[(ii)] there exists a pro-$\C$ group $Q$ and a continuous homomorphism $\psi:\Pi\to Q$ such that for every $v\in V$, $\psi|_{\mathcal{G}(v)}$ is injective and $\psi(\mathcal{G}(v))$ is open in $Q$.
    \end{itemize}
\end{theorem}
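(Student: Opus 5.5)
For $(i)\Rightarrow(ii)$ I will use Theorem \ref{A.8} directly. Assume $(\mathcal{G},\Gamma)$ is tame over $N$. Theorem \ref{A.8}(i) gives a closed normal subgroup $M$ of $\Pi$ with $\mathcal{G}(v)\cap M=1$ and $\mathcal{G}(v)M$ open in $\Pi$ for every $v\in V$. Take $Q:=\Pi/M$, which is pro-$\C$, and let $\psi$ be the quotient map. Injectivity of $\psi|_{\mathcal{G}(v)}$ is exactly $\mathcal{G}(v)\cap M=1$. Openness of $\psi(\mathcal{G}(v))=\mathcal{G}(v)M/M$ follows from openness of $\mathcal{G}(v)M$ in $\Pi$. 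Injectivity of the graph of groups is part of the definition of tameness, so nothing further is needed here.

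For $(ii)\Rightarrow(i)$ I will adapt Lemmas \ref{PropernessofEmbed}, \ref{PropernessLem} and \ref{OutFin}. Since $\psi\circ\varphi_v$ is injective, each $\varphi_v$ is injective, so $(\mathcal{G},\Gamma)$ is injective. Lemma \ref{GraphCore} then gives a closed normal subgroup $N_0$ of $\Pi$ that is open in every edge group. Because the graph is commensurating, $N_0$ is also open in every vertex group, so condition (i) of Definition \ref{def:tame} holds for $N_0$.

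The remaining step is condition (ii) of Definition \ref{def:tame}: the image of $\Pi$ in $\operatorname{Out}N_0$ must be finite, and I expect this to be the main obstacle. My plan is as follows. Fix a vertex $v$. Then $\psi|_{N_0}$ is injective, and $\psi(N_0)$ is open in $\psi(\mathcal{G}(v))$, which is open in $Q$; hence $\psi(N_0)$ is open in $Q$. Let $Q_0$ be the closed subgroup $\psi(\Pi)$, which contains the open subgroup $\psi(N_0)$, so $Q_0$ is open in $Q$. Since $N_0\trianglelefteq\Pi$, $\psi(N_0)$ is normal in $Q_0$, and conjugation gives a map $Q_0\to\operatorname{Out}\psi(N_0)\cong\operatorname{Out}N_0$ whose image is finite, because $\psi(N_0)$ is open in $Q_0$ and inner automorphisms die. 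The action of $g\in\Pi$ on $N_0$ is transported by $\psi|_{N_0}$ to the action of $\psi(g)$ on $\psi(N_0)$: for $n\in N_0$, $\psi(n^g)=\psi(n)^{\psi(g)}$, and $\psi|_{N_0}$ is injective. Hence the map $\Pi\to\operatorname{Out}N_0$ factors through $Q_0\to\operatorname{Out}\psi(N_0)$, and so has finite image. This subtlety is exactly the point where injectivity of $\psi$ on $N_0$, rather than on all of $\Pi$, is what is needed.

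Therefore $(\mathcal{G},\Gamma)$ is tame over $N_0$, which proves $(ii)\Rightarrow(i)$ and completes the equivalence.
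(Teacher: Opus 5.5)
Your proposal is correct and follows the paper's proof essentially verbatim. For $(i)\Rightarrow(ii)$ you take $Q=\Pi/M$ with $M$ from Theorem \ref{A.8}. For $(ii)\Rightarrow(i)$ you take $N$ from Lemma \ref{GraphCore} and factor $\Pi\to\operatorname{Out}N$ through $\psi(\Pi)\to\operatorname{Out}\psi(N)$, whose image is finite because $\psi(N)$ is open and normal in $\psi(\Pi)$. You are in fact slightly more explicit than the paper about why $\psi(N_0)$ is open in $\psi(\Pi)$ and why $\psi|_{N_0}$ induces $\operatorname{Out}N_0\cong\operatorname{Out}\psi(N_0)$.
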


\begin{proof}
    If $(\mathcal{G},\Gamma)$ is tame over $N$, then Theorem \ref{A.8} gives a closed normal subgroup $M$ of $\Pi$ such that for all $v\in V$, $\mathcal{G}(v)\cap M=1$ and $\mathcal{G}(v)M$ is open in $\Pi$. Take $Q=\Pi/M$, and let $\psi$ be the canonical projection, it is clear that $\psi|_{\mathcal{G}(v)}$ is injective and $\psi(\mathcal{G}(v))$ is open in $Q$ for each $v\in V$.

    Conversely, if $(ii)$ holds, it is clear that $(\mathcal{G},\Gamma)$ is injective. By Lemma \ref{GraphCore}, there is a closed normal subgroup $N$ of $\Pi$ which is open in each vertex group of $(\mathcal{G},\Gamma)$. It remains to check that the subgroup $N$ satisfies condition $(ii)$ of Definition \ref{def:tame}.

    Let $P:=\psi(\Pi)$, so that $P$ contains $\psi(N)$ as an open normal subgroup. The homomorphism $P\to\operatorname{Out}\psi(N)$ induced by conjugation factors through the finite group $P/\psi(N)$, and therefore its image is finite. The result follows from the commutativity of the following diagram:

    \[
        \begin{tikzcd}
            \Pi \arrow[r] \arrow[d] &
            P \arrow[d] \\
            \operatorname{Out}(N) \arrow[r] &
            \operatorname{Out}(\psi(N))
        \end{tikzcd}
    \]
    and the fact that the map $\operatorname{Out}(N)\to\operatorname{Out}(\psi(N))$ is an isomorphism.
\end{proof}

\begin{remark}
    When $\Gamma$ consists of a single edge with two vertices, condition
    $(ii)$ of Theorem \ref{A.9} asserts precisely that the commensuration
    $(\partial_0,\partial_1)$ admits a completion, and injectivity of
    $(\mathcal{G},\Gamma)$ amounts to the properness of the free pro-$\C$ amalgamated product. Theorem \ref{A.9} thus recovers Theorem
    \ref{TeoA}.
\end{remark}

\noindent \textit{Acknowledgments.} The author thanks Pavel Zalesskii for suggesting that this paper be written and for fruitful discussions, and Ashot Minasyan for valuable comments and suggestions..

\end{document}